\newcommand*\openquote{\makebox(25,-10){\scalebox{3}{``}}}
\newcommand*\closequote{\makebox(25,-10){\scalebox{3}{''}}}
\colorlet{shadecolor}{White}
\newif\if@right
\def\shadequote{\@righttrue\shadequote@i}
\def\shadequote@i{\begin{snugshade}\begin{quote}\openquote}
\def\endshadequote{%
  \if@right\hfill\fi\closequote\end{quote}\end{snugshade}}
\newlist{steps}{enumerate}{1}
\setlist[steps, 1]{label = Step \arabic*:}
\newtheorem{thm}[equation]{Theorem}
\newtheorem*{thm*}{Theorem}
\newtheorem{cor}[equation]{Corollary}
\newtheorem{prop}[equation]{Proposition}
\newtheorem{lem}[equation]{Lemma}
\newtheorem{prob}[equation]{Problem}
\theoremstyle{definition}
\newtheorem{dfn}[equation]{Definition}
\newtheorem{fact}[equation]{}
\newcommand{\opn}{\operatorname}
\newcommand{\OpenLoop}{\mathcal{B}}
\newcommand{\ClosedLoop}{\mathcal{B}_F^{\opn{CL}}}
\newcommand{\Controller}{\mathcal{B}_F^{\opn{Ctrl}}}
\newcommand{\fdim}{\operatorname{fb.dim}}
\title{Pole placement for overdetermined 2D systems}
\author{Liran Shaul}
\author{Victor Vinnikov}
\address{Shaul: Department of Algebra, Faculty of Mathematics and Physics, Charles University in Prague, Sokolovsk\'a 83, 186 75 Praha, Czech Republic}
\email{shaul@karlin.mff.cuni.cz}
\address{Vinnikov: Department of Mathematics, Ben-Gurion University of the Negev, Beer Sheva 84105, Israel}
\email{vinnikov@math.bgu.ac.il}
\thanks{{\em 2010 Mathematics Subject Classification:}
primary: 93B55, secondary: 14H60, 47N70, 93B25, 93B27 }
\begin{document}

\begin{abstract}
We formulate and solve a pole placement problem by state feedback for overdetermined 2D systems modeled by commutative operator vessels. In this setting, the transfer function of the system is given by a meromorphic bundle map between two holomorphic vector bundles of finite rank over the normalization of a projective plane algebraic curve. The obstruction for a solution is given by an existence of a certain meromorphic bundle map on the input bundle. Reducing to the 1D case, this gives a functional obstruction which is equivalent to the classical pole placement theorem. Our result improves on, and gives a new approach to pole placement even in the classical case, and answers a question of Ball and Vinnikov.
\end{abstract}
\maketitle

\numberwithin{equation}{section}

\setcounter{section}{-1}
\section{Introduction}

An overdetermined 2D continuous-time time-invariant linear input-state-output system is a linear system described by the following system of equations:

\begin{equation}\label{eqn:ODSystem}
\Sigma:\left\{\begin{array}{c}
\frac{ \partial{x} } {\partial{t_1}}(t_1,t_2) = A_1 x(t_1,t_2)+ B_1 u(t_1,t_2)\\
\\
\frac{ \partial{x} } {\partial{t_2}}(t_1,t_2) = A_2 x(t_1,t_2)+ B_2 u(t_1,t_2)\\
\\
y(t_1,t_2) = C x(t_1,t_2) + D u(t_1,t_2).
\end{array}\right.
\end{equation}

Here, $u$, $x$ and $y$ represents the input, state, and output signals, respectively. The input space is denoted by $\mathcal{E}$, the state space by $\mathcal{H}$ and the output space by $\mathcal{E}_*$. All spaces are assumed to be Hilbert spaces over the complex numbers. The operators $A_1, A_2, B_1, B_2, C$ and $D$ act as follows:

\begin{equation}
\left.\begin{array}{c}
A_1,A_2:\mathcal{H} \rightarrow \mathcal{H} \\
B_1,B_2:\mathcal{E} \rightarrow \mathcal{H} \\
C:\mathcal{H} \rightarrow \mathcal{E}_*\\
D:\mathcal{E} \rightarrow \mathcal{E}_*
\end{array}\right.\nonumber
\end{equation}

Experience showed that a good model to study these kind of systems is a notion called a \textbf{Liv{\v{s}}ic-Kravitsky commutative two-operator vessel}. We recall the definition and most important properties of this model in Section 1 below. 
The purpose of this article is to initiate the development of a theory of state feedback for these kinds of systems. 
The next quote is taken from \cite[Page 14]{So}:

\begin{shadequote}
The Pole-Shifting Theorem is central to linear systems theory and is itself the starting point for more interesting analysis.
\end{shadequote}

Before discussing the specifics of the results of this paper, let us first put the pole-shifting theorem into context.
There are various different formulations of the pole-shifting theorem in control theory and linear algebra.
The \textit{Spectrum Assignment Problem} asks, 
given matrices $A \in M_{n\times n}(\mathbb{C})$, 
$B \in M_{n \times m}(\mathbb{C})$ if there exists 
$F \in M_{m \times n}(\mathbb{C})$ such that $A+BF$ has a given set of eigenvalues.  
This is true if and only if $(A,B)$ is controllable, equivalently, 
by the Hautus Lemma,
if the matrix $\left[ \begin{matrix} A-\lambda I & B \end{matrix}\right]$ has full rank for all $\lambda \in \mathbb{C}$.
A dual version studies the \textit{Spectrum Assignment Problem} for $A+GC$,
with $C \in M_{n \times m}(\mathbb{C})$ given, and asks for the existence of $G \in M_{m \times n}(\mathbb{C})$.
This happens exactly when $(C,A)$ is observable, equivalently, when 
$\left[ \begin{matrix} A-\lambda I \\ C \end{matrix}\right]$ has full rank for all $\lambda \in \mathbb{C}$.
The \textit{Spectrum Stabilization Problem} is a variation of the above which asks 
for which pairs $(A,B)$ can one stabilize $A+BF$, 
so that all of its eigenvalues will be in the left half-plane.
The Hautus Lemma for stabilizability states that this is the case if and only if 
$\left[ \begin{matrix} A-\lambda I & B \end{matrix}\right]$ has full rank for all 
$\lambda \in \mathbb{C}$ with $\Re(\lambda) \ge 0$. 
Again, a dual version exists which characterizes detectable pairs $(C,A)$.
An even more general linear algebra question asks, 
given $A \in M_{n\times n}(\mathbb{C})$, $B \in M_{n\times m}(\mathbb{C})$, $C \in M_{p\times n}(\mathbb{C})$
if there exists $K \in M_{m \times p}(\mathbb{C})$, 
such that $A+ BKC$ has a given set of eigenvalues.
See for example \cite{KY} for more details about these various questions and their solutions.

This paper deals with a generalization of the \textit{pole-shifting problem},
which is another variation of the above in control theory.
Consider an open loop continuous-time time-invariant linear input-state-output system of the form
\[
\Sigma: \left\{
\begin{matrix}
x'(t) = A\cdot x(t) + B \cdot u(t), & x(0) = x_0\\
y(t) = C\cdot x(t) + D \cdot u(t) &
\end{matrix}\right.
\]
The transfer function of $\Sigma$ is given by
\[
T_{\Sigma}(\lambda) = D+C(\lambda \cdot I - A)^{-1}B.
\]
Given a feedback operator $F$, forming a closed loop system with respect to state feedback,
one obtains the following linear system $\Sigma_F$:
\[
\Sigma_F: \left\{
\begin{matrix}
x'(t) = (A+BF)\cdot x(t) + B \cdot u(t), & x(0) = x_0\\
y(t) = (C+DF)\cdot x(t) + D \cdot u(t) &
\end{matrix}\right.
\]
The transfer function of the closed loop system is given by
\[
T_{\Sigma_F}(\lambda) = D+(C+DF)(\lambda \cdot I - (A+BF))^{-1}B.
\]
The \textit{Pole-shifting problem} asks what are the possible pole sets of the function $T_{\Sigma_F}(\lambda)$
one may obtain by changing the feedback operator $F$.

This paper discusses the \textit{Pole-shifting problem} in the context of operator vessels. 
The transfer function of an operator vessel is given by a meromorphic bundle map between two vector bundles over a compact Riemann surface given by the normalization of a plane algebraic curve. Interpolation problems for such functions are far from being trivial, and the classical approach to pole shifting using an explicit construction of the feedback operator directly from the prescribed pole datum seems difficult to achieve. 

In view of this difficulty, we propose in this paper a new approach for pole placement. We will show (Proposition \ref{prop:Factorization}) that whenever a closed loop system of an operator vessel is formed by state feedback, its transfer function factors as a composition of the transfer function of the open loop system, and the transfer function of another system, called the controller system associated to the state feedback operator. The controller system has a simpler structure, and is thus easier to construct. Our construction gives a new approach for pole placement even for classical one-dimensional linear systems,
and as a result improves over the best known result even in the classical case.

We were led to the definition of the controller system by the rigidity of the vessel conditions. Thus, this work serves as a demonstration for the principle that developing system theoretic ideas in the more complicated overdetermined 2D setting might shed new light on the classical one dimensional case. Here is the main result of this text:

\begin{thm*}
Let $\mathcal{B}$ be an operator vessel (see Definition \ref{dfn:vessel}) satisfying the assumptions (\ref{fact:polArePowers}),
(\ref{fact:maximality}), (\ref{fact:smooth}) and (\ref{fact:regular}).
Denote by $X$ the compact Riemann surface associated to it. 
Let $E_{\opn{in}}$ and $E_{\opn{out}}$ be the input and output holomorphic vector bundles over $X$ associated to $\mathcal{B}$, 
and denote by $S:E_{\opn{in}}\to E_{\opn{out}}$ the transfer function of $\mathcal{B}$. 

Given a meromorphic bundle map $T:E_{\opn{in}}\to E_{\opn{out}}$,
there exists an admissible state feedback operator  $F$, 
such that the closed loop system obtained from $\mathcal{B}$ by applying the feedback operator $F$ has a transfer function equal to $T$,
if and only if the left zero divisor of $T$ is contained in the left zero divisor of $S$, 
and $T$ is equal to $S$ at all points of $X$ which lie over the line at infinity.
\end{thm*}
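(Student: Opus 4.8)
The plan is to reduce the theorem to a statement purely about meromorphic bundle endomorphisms of $E_{\opn{in}}$, using the factorization in Proposition~\ref{prop:Factorization}. That proposition tells us that any closed loop transfer function obtained from $\OpenLoop$ by an admissible feedback $F$ factors as $T = S \circ M$, where $M \colon E_{\opn{in}} \to E_{\opn{in}}$ is the transfer function of the associated controller system $\Controller$. I would first record the structural features of $M$ that are forced by the definition of the controller system: under the standing assumptions $M$ is a meromorphic bundle endomorphism of $E_{\opn{in}}$ that is generically invertible, equals the identity over the line at infinity (since the controller has identity feedthrough and shares the state data of $\OpenLoop$), and whose zero divisor is supported on the pole divisor of $S$. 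The problem then becomes: characterize which meromorphic bundle maps $T$ arise as $S \circ M$ for such an $M$.

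For the necessity direction I would substitute $T = S \circ M$ into the two prescribed conditions. Over the line at infinity $M = \mathrm{Id}$, so $T = S \circ \mathrm{Id} = S$ there, giving the second condition at once. For the first condition I would run a local zero and pole divisor computation: since $M$ is invertible and acts on the input side of $S$, post-composition with $M$ cannot create any new left zeros of $T$; it can only delete left zeros of $S$, namely when a pole of $M$ (a placed closed loop pole) is forced to cancel against a zero of $S$. Hence the left zero divisor of $T$ is contained in that of $S$.

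For sufficiency I would reverse this. Given $T$ satisfying the two conditions, I would set $M := S^{-1} \circ T$, which is a well-defined meromorphic bundle endomorphism of $E_{\opn{in}}$ because $S$ is generically invertible. The second condition forces $M = \mathrm{Id}$ over the line at infinity, and the first condition forces the zero divisor of $M$ to be supported on the pole divisor of $S$, so that $M$ has exactly the divisorial shape of a controller transfer function. I would then realize $M$ as the transfer function of an admissible controller system, extract the feedback operator $F$ from that realization, and invoke Proposition~\ref{prop:Factorization} in reverse to conclude that the closed loop transfer function equals $S \circ M = T$.

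The main obstacle is the realization step in the sufficiency direction, where the rigidity of the vessel conditions is felt most acutely. Not every meromorphic bundle endomorphism of $E_{\opn{in}}$ with the correct divisor is the transfer function of an admissible controller, and producing a controller system whose operators satisfy the overdetermined vessel identities, and whose induced $F$ is admissible, is precisely the technical heart of the argument. I expect this to rest on the interpolation and realization theory for meromorphic bundle maps over $X$, together with careful bookkeeping of the zero and pole divisors, so as to guarantee that the cancellation pattern dictated by the first condition yields a genuine controller rather than a merely formal factorization.
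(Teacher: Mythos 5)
Your overall strategy is the paper's own (factor through the controller, translate the hypotheses into divisor conditions on the factor, realize that factor, extract $F$), but as written you have the factorization backwards, and this is not a cosmetic slip: it breaks exactly the step you single out as the technical heart. Proposition \ref{prop:Factorization} states that $S_{\OpenLoop} = S_{\ClosedLoop}\circ S_{\Controller}$, i.e.\ $T = S\circ S_{\Controller}^{-1}$, whereas you assert $T = S\circ S_{\Controller}$. Consequently your claimed structural feature of the controller --- that its zero divisor is supported on the pole divisor of $S$ --- is false for $S_{\Controller}$ itself: from the realization $S_{\Controller}^{-1} = I + F\bigl(\lambda_\xi I - A_\xi - B_\xi F\bigr)^{-1}B_\xi$ used in the proof of Proposition \ref{prop:Factorization}, the left zeros of $S_{\Controller}$ are precisely the left poles of $S_{\ClosedLoop}$, i.e.\ the \emph{placed} poles, which the whole construction is designed to move away from $\opn{LP}(S)$. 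The true statement (the forward half of Lemma \ref{lem:Interpolation}) is that the left \emph{poles} of $S_{\Controller}$ lie among those of $S$, equivalently that the zeros of $S_{\Controller}^{-1}$ do. The same inversion derails your sufficiency argument: you propose to realize $M = S^{-1}\circ T$ as a controller transfer function, but every controller transfer function shares $(A_1,A_2,\widetilde{B})$ with $\mathcal{B}$ and hence, for minimal $\mathcal{B}$, has all its left poles among the left poles of $S$; your $M$ instead has poles at the left zeros of $S$ being deleted and at the newly placed poles of $T$, and these are in general not poles of $S$ at all, so no such realization exists. The object that must be realized is $M^{-1} = T^{-1}\circ S$: the hypotheses $T|_{L_\infty} = S|_{L_\infty}$ and $\opn{LZ}(T)\subseteq\opn{LZ}(S)$ translate, via Lemma \ref{lem:DivisorDiff}, into $(T^{-1}S)|_{L_\infty}=1$ and $\opn{LP}(T^{-1}S)\subseteq\opn{LP}(S)$, which is exactly what Lemma \ref{lem:Interpolation} requires; one then sets $F=-K$, gets admissibility from Proposition \ref{prop:Controller}, and concludes $S_{\ClosedLoop} = S\circ(T^{-1}S)^{-1} = T$.

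A second, independent gap: minimality of $\mathcal{B}$ appears nowhere in your argument, and you attribute the key pole-containment property to ``the definition of the controller system.'' It is not definitional. It requires the restricted realization $S_\xi(\lambda)=D+C(\lambda I-A_\xi)^{-1}B_\xi$ to be a \emph{minimal} realization (Proposition \ref{prop:MinimalRealization}, which itself uses the vessel conditions and the existence of regular directions), so that $(A_\xi,B_\xi)$ is a global left pole pair of $S_\xi$ in the sense of \cite{BGR}; only then does sharing $(A_1,A_2,\widetilde{B})$ force $\opn{LP}(S_{\Controller})\subseteq\opn{LP}(S)$. Minimality is equally indispensable in the realization step: the converse of Lemma \ref{lem:Interpolation} first builds a minimal vessel realizing $T^{-1}S$ from its left pole data (via the Ball--Vinnikov zero-pole interpolation theorems) and then embeds its state space into $\mathcal{H}$ compatibly with $(A_1,A_2,\widetilde{B})$; this embedding exists only because, $\mathcal{B}$ being minimal, the pole data of $S$ is faithfully encoded by $(A_1,A_2,\widetilde{B})$. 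Since pole placement already fails for uncontrollable systems in the classical 1D specialization, no argument that never invokes controllability and observability can be complete.
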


This is repeated as Theorem \ref{thm:main} in the body of the paper.
It answers a question of Ball and Vinnikov (see \cite[Section 4]{BV1}).

We notice that the paper Ball--Boquet--Vinnikov \cite{BBV12} identified how vessels fit into the general framework of behavioral systems. It would be interesting to tie the study of state feedback for vessels with the general study of feedback in the behavioral framework (see e.g. \cite{R02}).

\numberwithin{equation}{subsection}

\section{Operator vessels and state feedback}\label{sec:Vessel}

\subsection{Operator vessels and their associated compact Riemann surface}

We begin by recalling the definition of an operator vessel, a notion which serves as a useful model for studying overdetermined 2D systems as in equation (\ref{eqn:ODSystem}). We refer the reader to \cite{BV1,Kr,Li,LKMV,Vi} and their references for more background about these objects.

\begin{dfn}\label{dfn:vessel}
A Liv{\v{s}}ic-Kravitsky commutative two-operator vessel (abbreviated to operator vessel, or simply a vessel) is a collection of linear operators and spaces of the form:
\[
\mathcal{B} = (A_1,A_2,\widetilde{B},C,D,\widetilde{D},\sigma_1,\sigma_2,\gamma,\sigma_{1*},\sigma_{2*},\gamma_*;\mathcal{H},\mathcal{E},\widetilde{\mathcal{E}},\mathcal{E}_*,\widetilde{\mathcal{E}_*})
\]
Here, the vector spaces $\mathcal{H},\mathcal{E},\widetilde{\mathcal{E}},\mathcal{E}_*$ and $\widetilde{\mathcal{E}}_*$ are finite dimensional vector spaces over $\mathbb{C}$,
and there are equalities $\dim \mathcal{E} = \dim \widetilde{\mathcal{E}}$, $\dim \mathcal{E}_* = \dim \widetilde{\mathcal{E}}_*$.

The operators act as follows:
 $A_1,A_2:\mathcal{H}\to \mathcal{H}$, $\widetilde{B}:\widetilde{\mathcal{E}} \to \mathcal{H}$, $\sigma_1,\sigma_2,\gamma:\mathcal{E}\to\widetilde{\mathcal{E}}$, $C:\mathcal{H}\to\mathcal{E}_*$, $D:\mathcal{E}\to\mathcal{E}_*$, $\widetilde{D}:\mathcal{E}_*\to \widetilde{\mathcal{E}}_*$ and $\sigma_{1*},\sigma_{2*},\gamma_*:\mathcal{E}_* \to \widetilde{\mathcal{E}}_*$.
 
It is assumed that the operators $D$ and $\widetilde{D}$ are invertible, 
and that the following conditions, called the vessel conditions, hold:
\begin{equation}\label{eqn:vesselCond}
\left.\begin{array}{l}
\mbox{(A1)} \quad A_1A_2 = A_2A_1\\
\mbox{(A2)} \quad A_1\tilde{B}\sigma_2-A_2\tilde{B}\sigma_1+\tilde{B}\gamma=0\\
\mbox{(A3)} \quad \sigma_{2*}CA_1-\sigma_{1*}CA_2+\gamma_*C = 0\\
\mbox{(A4)} \quad \sigma_{1*}D = \tilde{D}\sigma_1, \quad \sigma_{2*}D = \tilde{D}\sigma_2\\
\phantom{(A4)} \quad \gamma_*D = \tilde{D}\gamma + \sigma_{1*}C\tilde{B}\sigma_2-\sigma_{2*}C\tilde{B}\sigma_1.
\end{array}\right.
\end{equation}
\end{dfn}

\begin{fact}
Given an operator vessel 
\[
\mathcal{B} = (A_1,A_2,\widetilde{B},C,D,\widetilde{D},\sigma_1,\sigma_2,\gamma,\sigma_{1*},\sigma_{2*},\gamma_*;\mathcal{H},\mathcal{E},\widetilde{\mathcal{E}},\mathcal{E}_*,\widetilde{\mathcal{E}_*}),
\] 
we define two polynomials in two complex variables:
\[
\mathbf{p}_{0}(\lambda_1,\lambda_2) = \det(\lambda_1\sigma_2-\lambda_2\sigma_1+\gamma)
\]
and
\[
\mathbf{p}_{0*}(\lambda_1,\lambda_2) = 
\det(\lambda_1\sigma_{2*}-\lambda_2\sigma_{1*}+\gamma_*).
\]
\end{fact}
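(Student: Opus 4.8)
The statement is definitional: it introduces the two determinantal polynomials $p_{\opn{in}}$ and $p_{\opn{out}}$ attached to a vessel, and no identity or inequality is asserted that would require an argument. Accordingly, the only thing that genuinely needs checking is that the two displayed expressions are well defined, and the plan is to verify this directly. Since $\sigma_1,\sigma_2,\gamma\colon\mathcal{E}\to\widetilde{\mathcal{E}}$ are linear maps between spaces of equal dimension, the pencil $\lambda_1\sigma_2-\lambda_2\sigma_1+\gamma$ is, for each fixed $(\lambda_1,\lambda_2)$, a linear map $\mathcal{E}\to\widetilde{\mathcal{E}}$ whose matrix entries in any chosen pair of bases are affine-linear functions of $\lambda_1$ and $\lambda_2$. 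Its determinant is therefore a polynomial in $\lambda_1,\lambda_2$ of total degree at most $\dim\mathcal{E}$, and the identical reasoning applies verbatim to $p_{\opn{out}}$ using $\sigma_{1*},\sigma_{2*},\gamma_*\colon\mathcal{E}_*\to\widetilde{\mathcal{E}}_*$.

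Next I would address the mild ambiguity hidden in the word ``determinant.'' Because $\mathcal{E}$ and $\widetilde{\mathcal{E}}$ are a priori distinct spaces, the determinant of a map $\mathcal{E}\to\widetilde{\mathcal{E}}$ is not canonically a number but an element of $\opn{Hom}(\bwedge^{n}\mathcal{E},\bwedge^{n}\widetilde{\mathcal{E}})$ with $n=\dim\mathcal{E}$; choosing volume forms, equivalently bases, on $\mathcal{E}$ and $\widetilde{\mathcal{E}}$ identifies this one-dimensional space with $\mathbb{C}$. A different choice multiplies the identification by a nonzero constant, so $p_{\opn{in}}$ is well defined up to a nonzero scalar factor, and likewise for $p_{\opn{out}}$. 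This is exactly the level of precision the definition needs, since every object built from it downstream, namely the affine curve $\{p_{\opn{in}}=0\}$, its projective closure, and the associated Riemann surface $X$, depends only on the zero locus and is therefore insensitive to scaling.

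Finally I would flag, but not prove here since it is not part of the statement as worded, the genuinely substantive companion fact that these two polynomials coincide up to a nonzero scalar, so that a single \emph{discriminant curve} is attached to the vessel. The mechanism is already visible in the vessel conditions: the relations $\sigma_{1*}D=\widetilde{D}\sigma_1$ and $\sigma_{2*}D=\widetilde{D}\sigma_2$ give $\sigma_{i*}=\widetilde{D}\sigma_i D^{-1}$, whence $\lambda_1\sigma_{2*}-\lambda_2\sigma_{1*}=\widetilde{D}(\lambda_1\sigma_2-\lambda_2\sigma_1)D^{-1}$, while the last relation in (A4) expresses $\gamma_*$ as $\widetilde{D}\gamma D^{-1}$ plus a correction term built from $C\widetilde{B}$. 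The expected obstacle in that companion argument, and the reason it is a theorem rather than a one-line observation, is precisely to show that this $C\widetilde{B}$ correction leaves the determinant unchanged, which one controls using conditions (A2) and (A3) governing how $\widetilde{B}$ and $C$ interact with the pencils. For the statement actually under consideration, however, no such work is required: it is a definition, and the well-definedness verification above is all that the argument calls for.
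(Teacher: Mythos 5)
Your treatment is correct and matches the paper's: the statement is a pure definition, for which the paper supplies no proof, and your well-definedness check (polynomiality of the pencil determinants, and the basis-dependence of a determinant between distinct spaces being only a nonzero scalar) is exactly the right, and the only, content to supply. Your closing aside is also appropriately scoped, though note that the paper derives the proportionality $p_{\opn{out}} = \mu\, p_{\opn{in}}$ not from conditions (A2)--(A3) alone but from the assumptions (\ref{fact:polArePowers}) and (\ref{fact:maximality}) together with invertibility of $D$, citing \cite[Section 1.2]{BV1} --- a harmless imprecision since you explicitly defer that companion fact.
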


\begin{fact}\label{fact:polArePowers}
We will make the following assumption on the polynomials $\mathbf{p}_{0}$ and $\mathbf{p}_{0*}$: 
we assume that 
\begin{equation}\label{eqn:pin}
\mathbf{p}_{0}(\lambda_1,\lambda_2) = (\mathbf{f}_0(\lambda_1,\lambda_2))^r, 
\end{equation}
and 
\begin{equation}\label{eqn:pout}
\mathbf{p}_{0*}(\lambda_1,\lambda_2) = (\mathbf{f}_{0*}(\lambda_1,\lambda_2))^s
\end{equation}
for some irreducible polynomials $\mathbf{f}_0,\mathbf{f}_{0*} \in \mathbb{C}[\lambda_1,\lambda_2]$.
We define the following plane algebraic curves:
\[
\mathbf{C}_1 = \{(\lambda_1,\lambda_2) \in \mathbb{C}^2 | \mathbf{f}_0(\lambda_1,\lambda_2)=0\}, \quad \mathbf{C}_2 = \{(\lambda_1,\lambda_2) \in \mathbb{C}^2 | \mathbf{f}_{0*}(\lambda_1,\lambda_2)=0\}.
\]
By abuse of notation we will also denote the extensions of these affine curves to $\mathbb{P}^2\mathbb{C}$ by $\mathbf{C}_1, \mathbf{C}_2$.
Denoting by $L_{\infty}$ the line at infinity of $\mathbb{P}^2\mathbb{C}$,
we will make the assumption that for any $p \in \mathbf{C}_1$ (respectively $p \in \mathbf{C}_2$),
such that $p \in L_{\infty}$,
the intersection number of $\mathbf{C}_1$ (resp. $\mathbf{C}_2$) and $L_{\infty}$ at $p$ is equal to $1$.
\end{fact}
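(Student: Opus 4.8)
The statement above is not a proposition in need of proof: it records two standing hypotheses on the vessel $\mathcal{B}$ together with the notation they generate, and it asserts nothing that could be derived from the preceding material. Consequently there is no theorem to establish and no argument to carry out. What I would do instead is verify that the objects it introduces are well defined and record where the two hypotheses are actually used; the ``plan'' is thus one of bookkeeping and self-consistency rather than deduction.

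I would isolate three points of well-posedness. First, the factorizations $p_{\opn{in}} = (p_1)^r$ and $p_{\opn{out}} = (p_2)^s$ with $p_1, p_2$ irreducible are assumed, not proved; the only thing to note is that when such a factorization holds the irreducible factors $p_1, p_2$ are determined up to a scalar by unique factorization in $\mathbb{C}[\lambda_1, \lambda_2]$, so that the curves $\mathbf{C}_1, \mathbf{C}_2$ are unambiguous. Second, the passage to $\mathbb{P}^2\mathbb{C}$ is the usual homogenization of $p_1, p_2$, and the ``abuse of notation'' is harmless because the homogenization of an irreducible affine polynomial defines an irreducible projective curve whose affine part is exactly the given one. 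Third, the intersection number of $\mathbf{C}_i$ with $L_{\infty}$ at a point $p$ is a well-defined local invariant, for example the length of the local ring of the scheme-theoretic intersection, so requiring it to equal $1$ is a genuine transversality condition: it forces $\mathbf{C}_i$ to be smooth at each of its points at infinity and to meet $L_{\infty}$ there transversally.

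There is no obstacle to any of this, precisely because there is no genuine claim; the force of the hypotheses is felt later rather than here. Irreducibility of $p_1$ and $p_2$ guarantees that the normalizations of $\mathbf{C}_1$ and $\mathbf{C}_2$ are connected compact Riemann surfaces, which is what lets a single surface $X$ be attached to $\mathcal{B}$ and the bundles $E_{\opn{in}}, E_{\opn{out}}$ be defined over it. The transversality condition at infinity guarantees in turn that each point of $\mathbf{C}_i$ on $L_{\infty}$ is a smooth point lying under exactly one point of the normalization, with the normalization unramified there; this is exactly what makes the clause ``$T$ equals $S$ at all points of $X$ lying over the line at infinity'' in the main theorem a finite, pointwise, and well-posed requirement. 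The honest ``proof'' of the statement is therefore the observation that it is definitional, with all substantive work deferred to the results that invoke it.
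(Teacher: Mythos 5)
Your assessment is correct and matches the paper exactly: the statement is a \texttt{fact} environment recording standing hypotheses and notation, and the paper itself supplies no proof for it. Your bookkeeping remarks (uniqueness of $p_1,p_2$ up to scalar by unique factorization, irreducibility of the homogenization, and that intersection number $1$ forces smoothness and transversality at the points at infinity) are all accurate and consistent with how the paper later uses these assumptions.
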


\begin{fact}
Given a plane algebraic curve $\mathbf{C} = \{(\lambda_1,\lambda_2) \mid f(\lambda_1,\lambda_2) = 0\}$, 
for some $f \in \mathbb{C}[x,y]$, 
and given some $p \in \mathbf{C}$,
we denote by $\mu_p(C)$ the multiplicity of $p$ on $\mathbf{C}$. 
By definition, this is the smallest integer $n$,
such that all partial derivatives of $f$ of degrees $<n$ vanish at $p$,
and at least one partial derivative of $f$ or order $n$ does not vanish at $p$.
Note that $\mathbf{C}$ is smooth at $p$ if and only if $\mu_p(C) = 1$.
\end{fact}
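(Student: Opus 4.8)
The plan is to prove the equivalence by unwinding both definitions and reducing each side to the single condition that the gradient $\nabla f = (\partial f/\partial \lambda_1, \partial f/\partial \lambda_2)$ does not vanish at $p$. First I would recall the Jacobian criterion for plane curves: since $p \in \mathbf{C}$ we have $f(p) = 0$, and $\mathbf{C}$ is by definition smooth at $p$ precisely when $\nabla f(p) \neq 0$, equivalently when $\mathbf{C}$ admits a well-defined (one-dimensional) tangent line at $p$. If the ambient notion of smoothness in force is instead the regularity of the local ring $\mathcal{O}_{\mathbf{C},p}$, then the first step is simply to invoke the classical Jacobian criterion for hypersurfaces, which for a plane curve asserts that $\mathcal{O}_{\mathbf{C},p}$ is regular (equivalently a discrete valuation ring) if and only if $\nabla f(p) \neq 0$; this reduces us to the same condition.

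With this reduction in hand, the core of the argument is a direct reading of the definition of $\mu_p(\mathbf{C})$ at $n = 1$. The partial derivatives of $f$ of degree strictly less than $1$ consist only of $f$ itself (the order-zero ``partial''), and this vanishes at $p$ because $p \in \mathbf{C}$; hence the required vanishing condition for $n = 1$ holds automatically. Consequently $\mu_p(\mathbf{C}) = 1$ if and only if at least one partial derivative of order exactly $1$, namely $\partial f/\partial \lambda_1$ or $\partial f/\partial \lambda_2$, fails to vanish at $p$ — and this is precisely the statement $\nabla f(p) \neq 0$.

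Chaining the two steps gives $\mu_p(\mathbf{C}) = 1 \iff \nabla f(p) \neq 0 \iff \mathbf{C} \text{ is smooth at } p$, which is the claim. I anticipate no genuine obstacle here: the only point needing care is to fix which definition of smoothness is being used. Under the Jacobian definition the argument is a one-line comparison of the two defining conditions at $n = 1$; under the local-ring definition the single nontrivial ingredient is the hypersurface Jacobian criterion, which should be cited rather than reproved.
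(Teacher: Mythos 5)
Your proposal is correct and matches the paper, which offers no explicit proof because the claim is exactly the immediate unwinding you give: at $n=1$ the only derivative of order $<1$ is $f$ itself, which vanishes since $p\in\mathbf{C}$, so $\mu_p(\mathbf{C})=1$ if and only if $\nabla f(p)\neq 0$, which is the (Jacobian) criterion for smoothness of the plane curve at $p$. Your remark about pinning down the definition of smoothness in force, with the hypersurface Jacobian criterion cited to bridge to the local-ring formulation, is the only point of care and you handle it appropriately.
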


\begin{fact}
For any $(\lambda_1,\lambda_2) \in \mathbf{C}_1$, we consider the subspace
\[
E_{\opn{in}}(\lambda_1,\lambda_2) = \ker(\lambda_1\sigma_2-\lambda_2\sigma_1+\gamma).
\]
Similarly, for $(\lambda_1,\lambda_2) \in \mathbf{C}_2$, we define
\[
E_{\opn{out}}(\lambda_1,\lambda_2) = \ker(\lambda_1\sigma_{2*}-\lambda_2\sigma_{1*}+\gamma_*).
\]
By \cite[Proposition 10.5.1]{LKMV}, for any $p \in \mathbf{C}_1$,
and any $q \in \mathbf{C}_2$
one has inequalities
\begin{equation}\label{eqn:ineq}
\dim E_{\opn{in}}(p) \le \mu_p(\mathbf{C}_1) \cdot r, \quad \dim E_{\opn{out}}(q) \le \mu_q(\mathbf{C}_2) \cdot s.
\end{equation}
Here, $r$ and $s$ are as in (\ref{eqn:pin}) and (\ref{eqn:pout}).
Note that
$E_{\opn{in}}$ and $E_{\opn{out}}$ have the structure of torsion free sheaves over $\mathbf{C}_1, \mathbf{C}_2$.
\end{fact}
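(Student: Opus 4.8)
The plan is to reduce the assertion to a local statement in linear algebra. Write $M(\lambda_1,\lambda_2) = \lambda_1\sigma_2 - \lambda_2\sigma_1 + \gamma$, so that $E_{\opn{in}}(p) = \ker M(p)$ and, by the very definition of $p_{\opn{in}}$, one has $\det M = p_{\opn{in}} = p_1^r$. Thus $E_{\opn{in}}(p)$ is the pointwise kernel of a matrix pencil whose determinant is $p_1^r$, and the inequality we seek is an upper bound on the dimension of this kernel in terms of the vanishing order of $\det M$ at $p$. The entire argument will hinge on the following principle, which I would isolate as a lemma: \emph{for a holomorphic matrix function $N(t)$ of one variable near $t=0$ with $\dim\ker N(0)=k$, the determinant satisfies $\opn{ord}_{t=0}\det N(t)\ge k$.} The case of $E_{\opn{out}}$ is then handled by the identical argument with $\sigma_{1*},\sigma_{2*},\gamma_*$, $p_2$ and $s$ in place of $\sigma_1,\sigma_2,\gamma$, $p_1$ and $r$.

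To prove the lemma I would first choose constant invertible matrices $P,Q$ bringing $N(0)$ into the normal form $\left(\begin{smallmatrix} 0_k & 0 \\ 0 & I_{n-k}\end{smallmatrix}\right)$; replacing $N$ by $PNQ$ changes $\det N$ only by a nonzero constant, hence not its vanishing order. Writing the transformed matrix in blocks $\left(\begin{smallmatrix} A(t) & B(t) \\ C(t) & D(t)\end{smallmatrix}\right)$ with $A$ of size $k$, the blocks $A,B,C$ all vanish at $t=0$ and are therefore each divisible by $t$, while $D(0)=I_{n-k}$ is invertible for small $t$. A Schur complement computation then gives $\det N = \det D(t)\cdot\det\!\bigl(A(t)-B(t)D(t)^{-1}C(t)\bigr)$, and since $A-BD^{-1}C$ is divisible by $t$ entrywise, factoring $t$ out of each of its $k$ rows contributes a factor $t^{k}$; as $\det D(0)\ne 0$, the order of $\det N$ at $0$ is at least $k$, proving the lemma.

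With the lemma in hand I would globalise by restricting to a generic line through $p$. Fix $p\in\mathbf{C}_1$ and set $k=\dim E_{\opn{in}}(p)$. For a direction $v\in\mathbb{C}^2$ the matrix $N(t):=M(p+tv)$ is holomorphic in $t$ with $\ker N(0)=\ker M(p)=E_{\opn{in}}(p)$, so the lemma yields $\opn{ord}_{t=0}\det N(t)\ge k$. On the other hand $\det N(t)=p_1(p+tv)^r$, whence $\opn{ord}_{t=0}\det N(t)=r\cdot\opn{ord}_{t=0}p_1(p+tv)$. Choosing $v$ generic, the order of $p_1$ along the line equals the multiplicity $\mu_p(\mathbf{C}_1)$, so we obtain $r\cdot\mu_p(\mathbf{C}_1)\ge k$, which is exactly the desired bound $\dim E_{\opn{in}}(p)\le\mu_p(\mathbf{C}_1)\cdot r$.

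I expect the one place requiring genuine care to be this last passage from one to two variables, namely the fact that a generic line through $p$ realises the vanishing order of $p_1$ equal to the multiplicity $\mu_p(\mathbf{C}_1)$; this is the standard identification of $\mu_p$ with the degree of the leading form (the tangent cone) of $p_1$ at $p$, and one must only check that this leading form, being a nonzero homogeneous polynomial, does not vanish in a generic direction. Finally, the torsion-free claim needs no new input: $E_{\opn{in}}$ is by construction the kernel of the $\mathcal{O}_{\mathbf{C}_1}$-linear sheaf map $M\colon \mathcal{O}_{\mathbf{C}_1}^{\dim\mathcal{E}}\to\mathcal{O}_{\mathbf{C}_1}^{\dim\widetilde{\mathcal{E}}}$, hence a subsheaf of a free sheaf over the integral curve $\mathbf{C}_1$, and any subsheaf of a torsion-free sheaf over an integral scheme is torsion-free; the same argument applies to $E_{\opn{out}}$ over $\mathbf{C}_2$.
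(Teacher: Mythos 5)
Your proof is correct, but note that the paper itself gives no argument here: the inequality (\ref{eqn:ineq}) is simply imported from \cite[Proposition 10.5.1]{LKMV}, and the torsion-freeness is asserted without comment, so your write-up supplies a self-contained, elementary replacement for that citation rather than paralleling an in-text proof. The two ingredients are sound: the Schur-complement lemma (for a holomorphic family $N(t)$ with $\dim\ker N(0)=k$ one has $\opn{ord}_{t=0}\det N(t)\ge k$) is correctly proved, and the globalisation by slicing with a generic line is the right move, since the leading form of $p_1$ at $p$ is a nonzero homogeneous polynomial of degree $\mu_p(\mathbf{C}_1)$ in two variables and so vanishes in only finitely many directions $v$; observe also that the lemma holds vacuously (order $+\infty$) if $\det N\equiv 0$, while for generic $v$ the order $r\cdot\opn{ord}_{t=0}\,p_1(p+tv)$ is finite, so the comparison $r\cdot\mu_p(\mathbf{C}_1)\ge k$ goes through without circularity. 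Two small points are worth flagging. First, your argument as written treats the affine points of $\mathbf{C}_1$; at the finitely many points over $L_\infty$, which the paper includes in $\mathbf{C}_1$ by abuse of notation, one should pass to the homogenized pencil $\mu_1\sigma_2-\mu_2\sigma_1+\mu_0\gamma$ and run the identical argument in another affine chart --- under assumption (\ref{fact:polArePowers}) those points are smooth, so the bound there reads $\dim E_{\opn{in}}\le r$. Second, on torsion-freeness: the sheaf structure intended is indeed the kernel subsheaf of the pencil acting on $\mathcal{O}_{\mathbf{C}_1}^{\dim\mathcal{E}}$, and your subsheaf-of-a-free-sheaf-over-an-integral-curve argument correctly establishes that it is torsion-free, which is all the statement claims; just be aware that the fibres of this kernel sheaf need not a priori coincide with the pointwise kernels $E_{\opn{in}}(p)$ (kernels do not commute with passage to fibres), and identifying the two is where the maximality and full saturation assumptions of (\ref{fact:maximality}) subsequently enter the paper's setup.
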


\begin{fact}\label{fact:maximality}
We will further make the maximality assumption, 
namely, that the two inequalities of (\ref{eqn:ineq}) are equalities at all points of $\mathbf{C}_1$ and $\mathbf{C}_2$. We also make a somewhat stronger assumption that $E_{\opn{in}}$ and $E_{\opn{out}}$ are fully saturated (see \cite[Section 4]{KV1}, \cite[Section 2.4.5]{KV2} or \cite[Page 340]{Vi2} for discussions about this notion).
The most important thing to note about this assumption, as explained in \cite{BV1}, is that it is satisfied if $\mathbf{C}_1$ and $\mathbf{C}_2$ are smooth algebraic curves,
and moreover either ${\mathbf p}_0$ and ${\mathbf p}_{0*}$ are irreducible polynomials, or $r=s=1$.
\end{fact}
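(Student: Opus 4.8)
The substantive content to prove is the closing assertion: that the maximality equalities in (\ref{eqn:ineq}) and the full-saturation condition both hold whenever $\mathbf{C}_1$ and $\mathbf{C}_2$ are smooth. The plan is to argue this for $E_{\opn{in}}$ over $\mathbf{C}_1$; the case of $E_{\opn{out}}$ over $\mathbf{C}_2$ is identical with $(p_2,s)$ replacing $(p_1,r)$. Since a smooth curve has $\mu_p(\mathbf{C}_1)=1$ at every point, (\ref{eqn:ineq}) reads $\dim E_{\opn{in}}(p)\le r$, and both conclusions will follow at once once I show that $E_{\opn{in}}$ is a vector bundle of rank exactly $r$: a locally free sheaf has constant fibre dimension equal to its rank and is reflexive, hence equal to its own saturation.

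First I would prove local freeness. At each closed point of a smooth curve the local ring is regular of dimension one, i.e.\ a discrete valuation ring, and a finitely generated torsion-free module over a principal ideal domain is free; as $E_{\opn{in}}$ is torsion free over $\mathbf{C}_1$, it is therefore locally free. This already settles full saturation, which is automatic for a genuine vector bundle on a smooth curve, and reduces the maximality statement to the single computation $\opn{rank} E_{\opn{in}}=r$.

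To compute the rank I would pass to the generic point of $\mathbf{C}_1$. Localizing $\mathbb{C}[\lambda_1,\lambda_2]$ at the height-one prime $(p_1)$ yields a discrete valuation ring with uniformizer $p_1$, over which the pencil $M=\lambda_1\sigma_2-\lambda_2\sigma_1+\gamma$ has an invariant-factor normal form $\opn{diag}(p_1^{a_1},\dots,p_1^{a_N})$, where $N=\dim\mathcal{E}$ and $\sum_i a_i=\opn{ord}_{(p_1)}(\det M)=r$ because $\det M=p_{\opn{in}}=p_1^r$. Hence $\opn{rank} E_{\opn{in}}$, being the generic fibre dimension of $\ker M$, equals $\#\{i : a_i\ge 1\}$; by upper semicontinuity of $\dim\ker$ this number is also a lower bound for $\dim E_{\opn{in}}(p)$ at every point, consistent with the bound $r$ from (\ref{eqn:ineq}). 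Everything thus reduces to showing $\#\{i : a_i\ge 1\}=r$, i.e.\ that every nonzero elementary divisor equals $p_1$.

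This last equality is the main obstacle, and it is a genuine statement about the particular determinantal representation rather than about its determinant: for $r\ge 2$ a single elementary divisor $p_1^r$ is compatible with $\det M=p_1^r$ yet would give generic rank $1<r$. The equality must therefore be drawn from the structure of the input pencil $\lambda_1\sigma_2-\lambda_2\sigma_1+\gamma$ imposed by the vessel conditions---in particular (A2), which ties the pencil to the commuting operators $A_1,A_2$ through $\widetilde{B}$---and this is precisely the content established in \cite{BV1} and \cite[Proposition 10.5.1]{LKMV}; this is where the real work lies. Granting it, $\opn{rank} E_{\opn{in}}=r$, so $\dim E_{\opn{in}}(p)=r=\mu_p(\mathbf{C}_1)\cdot r$ for every $p$ and $E_{\opn{in}}$ is fully saturated. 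The same argument applied to $E_{\opn{out}}$, $p_2$, $s$ and $\mathbf{C}_2$ then completes the proof that the maximality and full-saturation assumptions hold whenever $\mathbf{C}_1$ and $\mathbf{C}_2$ are smooth.
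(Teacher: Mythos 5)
The statement you are proving is an assumption block, not a theorem: the paper offers no proof, and the only mathematical content in it is the closing remark (attributed to \cite{BV1}) about the smooth case. Your first step is correct and is exactly the standard justification for that remark insofar as it concerns \emph{full saturation}: on a smooth curve every local ring is a discrete valuation ring, a finitely generated torsion-free module over a DVR is free, so $E_{\opn{in}}$ and $E_{\opn{out}}$ are locally free and the saturation condition is vacuous. Full saturation is a condition at singular points, which is why smoothness disposes of it.

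The genuine gap is your attempt to also derive the \emph{maximality equalities} from smoothness. Your reduction is honest -- everything hinges on showing that every nonzero elementary divisor of the pencil at the generic point is exactly $p_1$, i.e.\ $a_i\le 1$ for all $i$ -- but the step you then ``grant'' is not established by the sources you invoke, and it is in fact false: \cite[Proposition 10.5.1]{LKMV} gives only the \emph{inequality} (\ref{eqn:ineq}), which is precisely why the paper must impose equality as a standing hypothesis. For $r\ge 2$ smoothness gives no help. Concretely, take $\mathcal{E}=\widetilde{\mathcal{E}}=\mathbb{C}^2$, $\sigma_1=0$, $\sigma_2=I$, and
\[
\gamma=\begin{pmatrix}0 & 1\\ 0 & 0\end{pmatrix},
\qquad
\lambda_1\sigma_2-\lambda_2\sigma_1+\gamma=\begin{pmatrix}\lambda_1 & 1\\ 0 & \lambda_1\end{pmatrix},
\]
so $p_{\opn{in}}=\lambda_1^2=p_1^2$ with $\mathbf{C}_1$ the smooth line $\{\lambda_1=0\}$, meeting $L_\infty$ transversally; yet $\dim E_{\opn{in}}(p)=1<2=\mu_p(\mathbf{C}_1)\cdot r$ at every point. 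This pencil arises from a bona fide vessel (take $\mathcal{H}=0$, $\widetilde{B}=0$, $C=0$, $D=\widetilde{D}=I$ and output data equal to input data; all of (A1)--(A4) hold trivially, and the regularity assumption (\ref{fact:regular}) holds via $\sigma_2$), so the vessel conditions do not rescue the implication either. The resolution is that the paper's final sentence refers to the saturation refinement being automatic in the smooth case, not to maximality, which remains an independent assumption; note that your argument does close in the special case $r=1$, where $\dim E_{\opn{in}}(p)\ge 1$ because the determinant vanishes on the curve and the inequality caps it at $\mu_p(\mathbf{C}_1)=1$ -- consistent with the fact that maximality is only ``generic,'' as in the $r=s=1$ setting of the paper's final section.
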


\begin{fact}
As explained in \cite[Section 1.2]{BV1}, the assumptions (\ref{fact:polArePowers}) and (\ref{fact:maximality}) and the fact that the operator $D$ is invertible imply that there is some constant $\mu \in \mathbb{C}^{\times}$ such that $\mathbf{p}_{0*}(\lambda_1,\lambda_2) = \mu \cdot \mathbf{p}_{0}(\lambda_1,\lambda_2)$. 
Thus, under these assumptions, to any vessel $\mathcal{B}$ there is an associated plane algebraic curve 
\[
\mathbf{C} = \{(\lambda_1,\lambda_2) \in \mathbb{C}^2 \mid \mathbf{p}_{0}(\lambda_1,\lambda_2)=0\} = \{(\lambda_1,\lambda_2) \in \mathbb{C}^2 \mid \mathbf{p}_{0*}(\lambda_1,\lambda_2)=0\}.
\]
Denote by $X$ the associated compact Riemann surface obtained from the normalization of $\mathbf{C}$. 
According to \cite[Theorem 2.1]{BV2}, the torsion free sheaves $E_{\opn{in}}$ and $E_{\opn{out}}$ lift to holomorphic vector bundles over $X$. 
By abuse of notation, we will also denote them by $E_{\opn{in}}$ and $E_{\opn{out}}$.
Vector bundles that arise in such a way are called vector bundles which have a determinantal representation.
\end{fact}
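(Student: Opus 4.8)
The plan is to reduce the proportionality of $p_{\opn{out}}$ and $p_{\opn{in}}$ to a single pencil identity extracted from the vessel conditions, and then to analyze that identity separately at infinity and along the curve. Writing $U(\lambda_1,\lambda_2) = \lambda_1\sigma_2 - \lambda_2\sigma_1 + \gamma$ and $U_*(\lambda_1,\lambda_2) = \lambda_1\sigma_{2*} - \lambda_2\sigma_{1*} + \gamma_*$ for the two determinantal pencils, the three relations in condition (A4) combine, after multiplying $U_*$ by $D$ on the right, into
\[
U_*(\lambda_1,\lambda_2)\,D = \widetilde{D}\,U(\lambda_1,\lambda_2) + R, \qquad R := \sigma_{1*}C\widetilde{B}\sigma_2 - \sigma_{2*}C\widetilde{B}\sigma_1,
\]
the crucial feature being that the correction $R$ is a constant operator, independent of $\lambda_1,\lambda_2$. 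Since $D$ and $\widetilde{D}$ are invertible we have $\dim\mathcal{E} = \dim\widetilde{\mathcal{E}} = \dim\mathcal{E}_* = \dim\widetilde{\mathcal{E}}_* =: n$, so both $p_{\opn{in}}$ and $p_{\opn{out}}$ have degree at most $n$. Taking determinants gives $\det(D)\,p_{\opn{out}} = \det\bigl(\widetilde{D}\,U(\lambda_1,\lambda_2) + R\bigr)$, and the whole statement is reduced to showing that the constant $R$ does not alter the determinant of the pencil beyond the scalar $\det\widetilde{D}$, i.e. that $\det(\widetilde{D}\,U + R) = \det\widetilde{D}\cdot p_{\opn{in}}$; this yields the claim with $\mu = \det\widetilde{D}/\det D \in \mathbb{C}^\times$.

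First I would dispose of the behavior at infinity for free. Because $R$ is constant it contributes only to terms of degree $< n$, so the degree-$n$ homogeneous part of $\det(\widetilde{D}\,U + R)$ equals $\det\widetilde{D}\cdot\det(\lambda_1\sigma_2 - \lambda_2\sigma_1)$, which is $\det\widetilde{D}$ times the leading form of $p_{\opn{in}}$. Hence the leading forms of $p_{\opn{out}}$ and $p_{\opn{in}}$ already agree up to the scalar $\mu$; in particular $\deg p_{\opn{out}} = \deg p_{\opn{in}} = n$ and $\mathbf{C}_1,\mathbf{C}_2$ meet the line at infinity in the same points (consistent with the transversality in (\ref{fact:polArePowers}) and with the behavior over $L_\infty$ in the main theorem). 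It then remains only to control the lower-order terms, equivalently to prove that the affine zero loci coincide.

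The heart of the argument, and the step I expect to be the main obstacle, is the set-theoretic equality $\mathbf{C}_1 = \mathbf{C}_2$. Given $p=(\lambda_1,\lambda_2)\in\mathbf{C}_1$ and $0\neq v\in E_{\opn{in}}(p)=\ker U(p)$, the identity above gives $U_*(p)\,Dv = Rv$. To force $U_*(p)$ to be singular I would exploit the remaining vessel conditions: the kernel relation $\gamma v = \lambda_2\sigma_1 v - \lambda_1\sigma_2 v$ together with the input condition (A2) yields the Liv{\v{s}}ic-type identity $(A_2-\lambda_2)\widetilde{B}\sigma_1 v = (A_1-\lambda_1)\widetilde{B}\sigma_2 v$ in $\mathcal{H}$, while the output condition (A3) governs the operators $\sigma_{1*}C$ and $\sigma_{2*}C$ occurring in $R$; feeding these into $Rv$ and invoking the maximality assumption (\ref{fact:maximality}) — which turns the inequalities (\ref{eqn:ineq}) into equalities and thereby fixes $\dim\ker U_*(p)$ — forces the rank of $U_*(p)$ to drop, so $p\in\mathbf{C}_2$. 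The reverse inclusion follows symmetrically from the rearranged identity $U(\lambda_1,\lambda_2)=\widetilde{D}^{-1}U_*(\lambda_1,\lambda_2)D-\widetilde{D}^{-1}R$. Making this rank-drop bookkeeping precise, rather than merely matching the supports of the two polynomials, is exactly the delicate computation of \cite[Section 1.2]{BV1}, and is where I expect essentially all of the difficulty to lie.

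Once $\mathbf{C}_1 = \mathbf{C}_2$ is established, the upgrade to genuine proportionality is formal. By (\ref{fact:polArePowers}) we have $p_{\opn{in}} = p_1^{\,r}$ and $p_{\opn{out}} = p_2^{\,s}$ with $p_1,p_2$ irreducible, so equality of the reduced zero loci forces $p_2 = c\,p_1$ for some $c\in\mathbb{C}^\times$; comparing total degrees, both equal to $n$, forces $r = s$, whence $p_{\opn{out}} = \mu\,p_{\opn{in}}$ with $\mu = \det\widetilde{D}/\det D$. The pointwise identities $\mu_p(\mathbf{C}_1)\,r = \dim E_{\opn{in}}(p)$ and $\mu_p(\mathbf{C}_2)\,s = \dim E_{\opn{out}}(p)$ supplied by maximality give an independent confirmation that $r=s$ and that the divisorial data agree. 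Finally, setting $\mathbf{C} = \{p_1 = 0\}$ and letting $X$ be its normalization — a compact Riemann surface since $p_1$ is irreducible — the torsion-free sheaves $E_{\opn{in}},E_{\opn{out}}$ on $\mathbf{C}$ pull back to $X$, and the full-saturation part of (\ref{fact:maximality}) is precisely the hypothesis guaranteeing that these pullbacks are locally free; hence they are holomorphic vector bundles over $X$ by \cite[Theorem 2.1]{BV2}, which completes the fact.
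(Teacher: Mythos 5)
The pencil identity $U_*(\lambda_1,\lambda_2)D = \widetilde{D}\,U(\lambda_1,\lambda_2) + R$ with constant $R = \sigma_{1*}C\widetilde{B}\sigma_2 - \sigma_{2*}C\widetilde{B}\sigma_1$ is correct and is the right starting point, and your endgame (irreducibility of $p_1,p_2$ plus a degree count giving $p_2 = c\,p_1$ and $r=s$) is sound. But there are two problems. The minor one: your conclusion that $\deg p_{\operatorname{in}} = \deg p_{\operatorname{out}} = n$ from the leading forms silently assumes $\det(\lambda_1\sigma_2 - \lambda_2\sigma_1) \not\equiv 0$; this is exactly the regular-direction assumption (\ref{fact:regular}), which is a standing hypothesis of the paper but is not among the three assumptions the statement quotes, so you should flag that you are using it. The serious one: the central step --- showing that $U_*(p)$ is singular for $p \in \mathbf{C}_1$ --- is never actually carried out. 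You derive $U_*(p)Dv = Rv$ and the Liv{\v{s}}ic identity $(A_1-\lambda_1)\widetilde{B}\sigma_2 v = (A_2-\lambda_2)\widetilde{B}\sigma_1 v$ (both correct), but then assert that ``feeding these into $Rv$ and invoking maximality forces the rank of $U_*(p)$ to drop,'' explicitly deferring the computation to the reference. That deferred computation is the entire content of the claim, and the mechanism you sketch cannot close as stated: the maximality assumption (\ref{fact:maximality}) only computes $\dim E_{\operatorname{out}}(q)$ at points $q$ \emph{already on} $\mathbf{C}_2$, so invoking it to conclude $p \in \mathbf{C}_2$ is circular, and nothing in your displayed identities excludes the possibility that $U_*(p)$ is invertible with $Dv = U_*(p)^{-1}Rv$.

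For what it is worth, the paper itself gives no proof here --- this is a background Fact cited to \cite[Section 1.2]{BV1} and \cite[Theorem 2.1]{BV2} --- and the standard way to close the gap routes the kernel transport through the transfer function rather than through the constant correction $R$: for $p \in \mathbf{C}_1 \setminus \operatorname{Spec}(A_1,A_2)$ the map $S_{\mathcal{B}}(p)$ carries $E_{\operatorname{in}}(p)$ into $E_{\operatorname{out}}(p)$; since $S_{\mathcal{B}}$ equals the invertible operator $D$ over $L_\infty$, it is injective on fibers off a proper analytic subset, so $E_{\operatorname{out}}(p) \neq 0$ on a Zariski-dense subset of $\mathbf{C}_1$, whence $p_{\operatorname{out}}$ vanishes on all of $\mathbf{C}_1$ and the Nullstellensatz with irreducibility gives $p_1 \mid p_2$; the symmetric argument (using your rearranged identity $U = \widetilde{D}^{-1}U_*D - \widetilde{D}^{-1}R$, or $S_{\mathcal{B}}^{-1}$) gives $p_2 \sim p_1$. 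Maximality then enters precisely where you relegated it to ``independent confirmation'': at a generic smooth point $\dim E_{\operatorname{in}}(p) = r$ and $\dim E_{\operatorname{out}}(p) = s$, and fiberwise invertibility of $S_{\mathcal{B}}$ forces $r = s$, yielding $p_{\operatorname{out}} = \mu\,p_{\operatorname{in}}$. Your final paragraph on lifting $E_{\operatorname{in}}, E_{\operatorname{out}}$ to bundles on the normalization $X$ via full saturation matches the paper's citation of \cite[Theorem 2.1]{BV2} and is fine.
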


\subsection{The transfer function of an operator vessel}

To discuss the transfer function associated to the vessel $\mathcal{B}$ we first recall the notion of a joint spectrum:

\begin{fact}
Let $A_1, A_2 \in M_n(\mathbb{C})$ be two square matrices. We say that $A_1,A_2$ are commuting if $A_1 \cdot A_2 = A_2 \cdot A_1$. 
In this case, their joint spectrum $\opn{Spec}(A_1,A_2)$ is defined to be the set of all pairs $(\lambda_1,\lambda_2) \in \mathbb{C}^2$, 
such that $A_1 \cdot v = \lambda_1 \cdot v$ and $A_2 \cdot v = \lambda_2 \cdot v$ for some non-zero vector $v\in \mathbb{C}^n$. 
The following easy fact from linear algebra characterizes the joint spectrum: 
for any $(\lambda_1,\lambda_2)\in \mathbb{C}^2$, there are $\xi_1,\xi_2 \in \mathbb{C}$ such that $\xi_1(\lambda_1I-A_1)+\xi_2(\lambda_2I-A_2)$ is invertible if and only if $(\lambda_1,\lambda_2) \notin \opn{Spec}(A_1,A_2)$. 
\end{fact}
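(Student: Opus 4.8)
My plan is to decide, for a given $(\lambda_1,\lambda_2)$, whether some combination of $B_1 := \lambda_1 I - A_1$ and $B_2 := \lambda_2 I - A_2$ is invertible; note $B_1, B_2$ still commute. One implication is immediate and uses no commutativity: if $(\lambda_1,\lambda_2) \in \opn{Spec}(A_1,A_2)$, there is a nonzero $v$ with $A_1 v = \lambda_1 v$ and $A_2 v = \lambda_2 v$, hence $B_1 v = B_2 v = 0$, so every combination $\xi_1 B_1 + \xi_2 B_2$ annihilates the nonzero vector $v$ and is singular. Contrapositively, the existence of an invertible combination forces $(\lambda_1,\lambda_2) \notin \opn{Spec}(A_1,A_2)$, which is the first half of the equivalence.

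For the converse I would encode the existence of an invertible combination through the homogeneous polynomial $P(\xi_1,\xi_2) := \det(\xi_1 B_1 + \xi_2 B_2)$ of degree $n$: an invertible combination exists if and only if $P$ is not identically zero, since $\mathbb{C}$ is infinite so a nonzero $P$ takes a nonvanishing value, while $P \equiv 0$ makes every combination singular. Using that two commuting complex matrices share a common invariant complete flag — i.e. a basis in which both $A_1$ and $A_2$ are upper triangular, with diagonals $(\alpha_i)_{i=1}^n$ and $(\beta_i)_{i=1}^n$ — the matrix $\xi_1 B_1 + \xi_2 B_2$ is upper triangular, and I can read off $P(\xi_1,\xi_2) = \prod_{i=1}^n\big(\xi_1(\lambda_1-\alpha_i) + \xi_2(\lambda_2-\beta_i)\big)$. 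This product is a nonzero polynomial exactly when none of its linear factors is the zero form, that is, exactly when $(\lambda_1,\lambda_2) \neq (\alpha_i,\beta_i)$ for every $i$.

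It thus remains to match the two conditions, and what I actually need is the inclusion $\{(\alpha_i,\beta_i) : 1 \le i \le n\} \subseteq \opn{Spec}(A_1,A_2)$: granting this, $(\lambda_1,\lambda_2)\notin\opn{Spec}(A_1,A_2)$ implies $(\lambda_1,\lambda_2)\neq(\alpha_i,\beta_i)$ for all $i$, hence $P\not\equiv 0$ and an invertible combination exists. To prove the inclusion, given a diagonal pair $(\alpha_k,\beta_k)$ I would pass to the joint generalized eigenspace $V = \{v : (A_1-\alpha_k I)^n v = 0,\ (A_2 - \beta_k I)^n v = 0\}$, which is nonzero and invariant under both operators, and on which $N_1 := (A_1 - \alpha_k I)|_V$ and $N_2 := (A_2 - \beta_k I)|_V$ are commuting nilpotents. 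The only step with genuine content is the small lemma that commuting nilpotents on a nonzero space share a null vector: $\ker N_1 \neq 0$, it is $N_2$-invariant because $N_1 N_2 = N_2 N_1$, and the nilpotent $N_2|_{\ker N_1}$ has nonzero kernel, yielding $w \neq 0$ with $N_1 w = N_2 w = 0$, i.e. a common eigenvector of $A_1, A_2$ with eigenvalues $\alpha_k, \beta_k$. This shows $(\alpha_k,\beta_k) \in \opn{Spec}(A_1,A_2)$ and completes the equivalence; the reverse inclusion, giving the full identity $\opn{Spec}(A_1,A_2) = \{(\alpha_i,\beta_i)\}$, follows from the joint generalized eigenspace decomposition but is not even required here. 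Everything else is bookkeeping with upper-triangular forms.
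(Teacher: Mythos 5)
The paper itself states this as an unproved ``easy fact,'' so there is no official proof to compare against; judged on its own merits, your proposal has the right architecture (the trivial forward direction, the determinant $P(\xi_1,\xi_2)=\det(\xi_1B_1+\xi_2B_2)$ factored over a common triangularizing basis, and a correct commuting-nilpotents lemma), but it has one genuine gap: the unsupported assertion that the joint generalized eigenspace $V=\ker(A_1-\alpha_kI)^n\cap\ker(A_2-\beta_kI)^n$ attached to a diagonal pair $(\alpha_k,\beta_k)$ is \emph{nonzero}. That assertion is exactly the nontrivial content of the inclusion $\{(\alpha_i,\beta_i)\}\subseteq\opn{Spec}(A_1,A_2)$ --- not the nilpotent lemma, which you single out as ``the only step with genuine content.'' What has to be ruled out is a mispairing on the diagonal: $\alpha_k$ is certainly an eigenvalue of $A_1$ and $\beta_k$ of $A_2$, but nothing you wrote prevents the flag from pairing them so that the two generalized eigenspaces intersect trivially (for $A_1=A_2=\mathrm{diag}(0,1)$, the pair $(0,1)$ consists of genuine eigenvalues of each matrix yet has $V=0$; one must show such a pair cannot occur on the diagonal of a \emph{common} triangular form). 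Indeed $(\alpha_1,\beta_1)$ comes from an honest common eigenvector, but for $k\ge2$ the pair $(\alpha_k,\beta_k)$ is a priori only a joint eigenvalue of the action induced on the quotient $W_k/W_{k-1}$ of the common flag, and the statement that a joint eigenvalue of a quotient action lifts to a joint eigenvalue of $(A_1,A_2)$ is essentially equivalent to what you are trying to prove.

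The claim is true and your proof is repairable in either of two ways. (a) Keep the triangularization: observe that $\det(tI-\xi_1A_1-\xi_2A_2)=\prod_{i=1}^n\bigl(t-\xi_1\alpha_i-\xi_2\beta_i\bigr)$ is basis-independent, so by unique factorization in $\mathbb{C}[\xi_1,\xi_2,t]$ every simultaneous triangularization yields the same multiset $\{(\alpha_i,\beta_i)\}$; comparing with the triangularization adapted to the joint generalized eigenspace decomposition $\mathbb{C}^n=\bigoplus_{(\alpha,\beta)}V_{\alpha,\beta}$ (which exists since each $A_1$-generalized eigenspace is $A_2$-invariant) shows every diagonal pair satisfies $V_{\alpha_k,\beta_k}\neq0$. (b) Bypass triangularization altogether: suppose every combination $\xi_1B_1+\xi_2B_2$ is singular; then $V_1=\ker B_1^n\neq0$ and is $A_2$-invariant, and if $B_2|_{V_1}$ were invertible, then on $V_1$ every combination with $\xi_2\neq0$ is invertible because $(B_2^{-1}B_1)|_{V_1}$ is nilpotent (here commutativity gives $(B_2^{-1}B_1)^n=B_2^{-n}B_1^n=0$), while on the complementary invariant subspace $\operatorname{im}B_1^n$ the polynomial $\det(\xi_1B_1+\xi_2B_2)$ is nonzero at $(1,0)$, so some combination would be invertible on all of $\mathbb{C}^n$ --- a contradiction; hence $\ker\bigl((B_2|_{V_1})^n\bigr)\neq0$ is a nonzero joint generalized eigenspace for $(\lambda_1,\lambda_2)$, and your commuting-nilpotents lemma finishes. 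Option (b) is shorter than your current route and uses only ingredients already in your write-up.
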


\begin{fact}
The transfer function of $\mathcal{B}$ is defined as follows: given 
$(\lambda_1,\lambda_2) \in \mathbf{C}$, 
such that 
\[
(\lambda_1,\lambda_2) \notin \opn{Spec}(A_1,A_2),
\]
let $\xi_1,\xi_2 \in \mathbb{C}$ be such that 
\[
\xi_1(\lambda_1I-A_1)+\xi_2(\lambda_2I-A_2)
\]
is invertible. 
For any $v \in E_{\opn{in}}(\lambda_1,\lambda_2)$, we define:
\[
S_{\mathcal{B}}(\lambda_1,\lambda_2)v = (D+C(\xi_1(\lambda_1I-A_1)+\xi_2(\lambda_2I-A_2))^{-1}\widetilde{B}(\xi_1\sigma_1+\xi_2\sigma_2))v.
\]
It was shown in \cite{BV1} that this is independent of the choices of $\xi_1,\xi_2$,
and that 
\[
S_{\mathcal{B}}(\lambda_1,\lambda_2)v \in E_{\opn{out}}(\lambda_1,\lambda_2).
\] 
This means that $S_{\mathcal{B}}$ is a bundle map, defined outside the finite set 
$\opn{Spec}(A_1,A_2)$. 
This map may be lifted to a meromorphic bundle map $E_{\opn{in}} \to E_{\opn{out}}$ over $X$ which is holomorphic outside points which lie over $\opn{Spec}(A_1,A_2)$ (and may or may not have poles at the points above the joint spectrum). 
Another useful property of $S_{\mathcal{B}}$ which follows from this definition is that $S_{\mathcal{B}}$ is equal to $D$ when restricted to the points of $X$ which lie over $L_{\infty}$.
\end{fact}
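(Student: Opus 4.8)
The plan is to prove the four assertions packaged in the statement in turn: the two algebraic ones (independence of the auxiliary pair $\xi_1,\xi_2$ and the fact that $S_{\mathcal{B}}(\lambda_1,\lambda_2)v$ lands in $E_{\opn{out}}(\lambda_1,\lambda_2)$) directly from the vessel conditions, and the two geometric ones (the meromorphic lift to $X$ and the value $D$ over $L_{\infty}$) by a rationality-and-asymptotics analysis of the resolvent. Throughout I abbreviate $P = \lambda_1 I - A_1$, $Q = \lambda_2 I - A_2$, $b_1 = \widetilde{B}\sigma_1 v$, $b_2 = \widetilde{B}\sigma_2 v$ and $M(\xi) = \xi_1 P + \xi_2 Q$, so that $S_{\mathcal{B}}(\lambda_1,\lambda_2)v = Dv + Cw$ with $w = M(\xi)^{-1}(\xi_1 b_1 + \xi_2 b_2)$.

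First I would isolate the single relation powering the algebraic claims. Writing vessel condition (A2) on $v$ and substituting $\gamma v = \lambda_2\sigma_1 v - \lambda_1\sigma_2 v$ (valid since $v \in E_{\opn{in}}(\lambda_1,\lambda_2)$) gives
\[
(\lambda_1 I - A_1)\widetilde{B}\sigma_2 v = (\lambda_2 I - A_2)\widetilde{B}\sigma_1 v,
\]
that is, $Pb_2 = Qb_1$; call this identity $(\ast)$. I claim $w$ solves $Pw = b_1$ and $Qw = b_2$ outright. Since $P$ and $Q$ commute by (A1) they commute with $M(\xi)^{-1}$, and using $M(\xi)b_1 = \xi_1 Pb_1 + \xi_2 Qb_1$ one computes
\[
Pw - b_1 = M(\xi)^{-1}\bigl(\xi_1 Pb_1 + \xi_2 Pb_2 - M(\xi)b_1\bigr) = \xi_2\, M(\xi)^{-1}(Pb_2 - Qb_1) = 0
\]
by $(\ast)$, and symmetrically $Qw - b_2 = \xi_1 M(\xi)^{-1}(Qb_1 - Pb_2) = 0$. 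As $Pw = b_1$, $Qw = b_2$ determine $w$ uniquely — any two solutions differ by an element of $\ker P \cap \ker Q \subseteq \ker M(\xi) = \{0\}$ — the vector $w$, hence $S_{\mathcal{B}}(\lambda_1,\lambda_2)v$, is independent of $\xi$. This settles independence and records the two identities needed next.

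For $S_{\mathcal{B}}(\lambda_1,\lambda_2)v \in E_{\opn{out}}(\lambda_1,\lambda_2)$ I would apply $\lambda_1\sigma_{2*} - \lambda_2\sigma_{1*} + \gamma_*$ to $Dv + Cw$ and show the result vanishes. On the summand $Dv$, the three identities of (A4) together with $(\lambda_1\sigma_2 - \lambda_2\sigma_1 + \gamma)v = 0$ collapse everything to $\sigma_{1*}Cb_2 - \sigma_{2*}Cb_1$. On the summand $Cw$, condition (A3) rewrites $(\lambda_1\sigma_{2*} - \lambda_2\sigma_{1*} + \gamma_*)C = \sigma_{2*}CP - \sigma_{1*}CQ$, and feeding in $Pw = b_1$, $Qw = b_2$ yields $\sigma_{2*}Cb_1 - \sigma_{1*}Cb_2$; the two contributions cancel. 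The maximality/saturation hypothesis (\ref{fact:maximality}) is what guarantees that a map built fiberwise this way is a genuine morphism of the saturated sheaves over the smooth affine locus.

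It remains to globalize, which I expect to be the main obstacle. Off $\opn{Spec}(A_1,A_2)$ the joint-spectrum characterization lets one pick $\xi$ locally constant with $M(\xi)$ invertible, so $M(\xi)^{-1}$ is holomorphic in $(\lambda_1,\lambda_2)$ and $S_{\mathcal{B}}$ is holomorphic there; Cramer's rule shows its entries in local trivializations are restrictions to $\mathbf{C}$ of rational functions with denominator a power of $\det M(\xi)$. Pulling back along the normalization $\nu\colon X \to \mathbf{C}$ turns these into meromorphic functions on the compact Riemann surface $X$, holomorphic away from $\nu^{-1}(\opn{Spec}(A_1,A_2))$, giving the asserted meromorphic bundle map $E_{\opn{in}} \to E_{\opn{out}}$ with the only possible poles lying over the joint spectrum; here I would lean on the lifting results of \cite{BV2} and the saturation discussion to know the fiberwise map really is meromorphic for the vector-bundle structures on $X$. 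Finally, a point $p_\infty$ of $X$ over $L_{\infty}$ corresponds, by the transversality in (\ref{fact:polArePowers}), to a smooth branch with $\lambda_i = \mu_i t^{-1} + O(1)$ for a local parameter $t$ and a direction $[\mu_1:\mu_2]\neq 0$; choosing $\xi$ with $\xi_1\mu_1 + \xi_2\mu_2 \neq 0$ makes $M(\xi) \sim (\xi_1\mu_1 + \xi_2\mu_2)t^{-1}I$ invertible near $p_\infty$ with $M(\xi)^{-1} = O(t)$, so the correction term $CM(\xi)^{-1}\widetilde{B}(\xi_1\sigma_1 + \xi_2\sigma_2)$ tends to $0$ and $S_{\mathcal{B}} \to D$ (which also certifies holomorphy at infinity). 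By (A4) the operator $D$ carries the fiber $\ker(\mu_1\sigma_2 - \mu_2\sigma_1)$ of $E_{\opn{in}}$ over $p_\infty$ into the corresponding fiber of $E_{\opn{out}}$, so $S_{\mathcal{B}} = D$ over $L_{\infty}$ is consistent as a bundle-map identity.
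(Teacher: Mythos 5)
Your proposal is correct, but be aware that the paper does not actually prove this statement: it is recalled as a background fact, with the independence of $(\xi_1,\xi_2)$ and the containment $S_{\mathcal{B}}(\lambda_1,\lambda_2)v \in E_{\opn{out}}(\lambda_1,\lambda_2)$ attributed to \cite{BV1} and the lift to $X$ resting on \cite{BV2}. So you have supplied a self-contained verification where the paper offers only citations, and every step checks out. The identity $(\lambda_1 I - A_1)\widetilde{B}\sigma_2 v = (\lambda_2 I - A_2)\widetilde{B}\sigma_1 v$ does follow from (A2) combined with $\gamma v = \lambda_2\sigma_1 v - \lambda_1\sigma_2 v$; your recasting of $w = M(\xi)^{-1}(\xi_1 b_1 + \xi_2 b_2)$ as the \emph{unique} solution of the overdetermined system $Pw = b_1$, $Qw = b_2$ (unique since $\ker P \cap \ker Q \subseteq \ker M(\xi) = \{0\}$) is a clean structural device that yields independence of $\xi$ without ever comparing two choices directly; and the cancellation $(\lambda_1\sigma_{2*} - \lambda_2\sigma_{1*} + \gamma_*)(Dv + Cw) = (\sigma_{1*}Cb_2 - \sigma_{2*}Cb_1) + (\sigma_{2*}Cb_1 - \sigma_{1*}Cb_2) = 0$ is exactly right, using (A3) and all three identities of (A4). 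The analysis at infinity is also sound: you correctly trace the expansion $\lambda_i = \mu_i t^{-1} + O(1)$, hence $M(\xi)^{-1} = O(t)$, back to the transversality assumption of (\ref{fact:polArePowers}), and your closing observation that (A4) makes $D$ a genuine map between the fibers $\ker(\mu_1\sigma_2 - \mu_2\sigma_1)$ and $\ker(\mu_1\sigma_{2*} - \mu_2\sigma_{1*})$ over $L_\infty$ is a detail the paper passes over silently. One small clarification: the fiberwise containment in $E_{\opn{out}}$ is pure linear algebra and needs neither maximality nor saturation; those hypotheses enter only where you in fact invoke them, namely in lifting the torsion-free sheaves to vector bundles on $X$ via \cite{BV2}, so your one hedging sentence about saturation on the smooth affine locus could simply be dropped.
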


\begin{fact}\label{fact:smooth}
In view of the above discussion, we will make the following additional assumption on the vessel $\mathcal{B}$: every point $\lambda$ in the joint spectrum $\opn{Spec}(A_1,A_2)$ is a smooth point of $\mathbf{C}$. This ensures that the singularities of $S_{\mathcal{B}}$ lie all over the smooth points of $\mathbf{C}$, and there are no poles at singular points.
\end{fact}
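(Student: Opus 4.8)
The plan is to read off both assertions directly from the description of $S_{\mathcal{B}}$ recorded in the preceding fact, invoking the smoothness hypothesis only through its contrapositive. Write $\pi \colon X \to \mathbf{C}$ for the normalization map. The preceding fact states that $S_{\mathcal{B}}$ lifts to a meromorphic bundle map $E_{\opn{in}} \to E_{\opn{out}}$ over $X$ which is holomorphic at every point of $X$ not lying over $\opn{Spec}(A_1,A_2)$; equivalently, the polar locus of $S_{\mathcal{B}}$ on $X$ is contained in $\pi^{-1}(\opn{Spec}(A_1,A_2))$. I would take this containment as the input to the argument. It is here that the genuine analytic content sits: the transfer-function formula involves the resolvent $(\xi_1(\lambda_1 I - A_1) + \xi_2(\lambda_2 I - A_2))^{-1}$, and by the joint-spectrum characterization recalled above, a linear combination $\xi_1(\lambda_1 I - A_1) + \xi_2(\lambda_2 I - A_2)$ is invertible for suitable $\xi_1,\xi_2$ precisely when $(\lambda_1,\lambda_2) \notin \opn{Spec}(A_1,A_2)$, so the formula is holomorphic off the joint spectrum.

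For the first assertion, suppose $x \in X$ is a point at which $S_{\mathcal{B}}$ fails to be holomorphic. By the containment above, $\pi(x) \in \opn{Spec}(A_1,A_2)$, and the smoothness hypothesis gives $\mu_{\pi(x)}(\mathbf{C}) = 1$, so $\pi(x)$ is a smooth point of $\mathbf{C}$; hence every singularity of $S_{\mathcal{B}}$ lies over a smooth point. For the second assertion, let $q \in \mathbf{C}$ be a singular point, so $\mu_q(\mathbf{C}) > 1$. The smoothness hypothesis forces $q \notin \opn{Spec}(A_1,A_2)$, whence $\pi^{-1}(q)$ is disjoint from $\pi^{-1}(\opn{Spec}(A_1,A_2))$, and $S_{\mathcal{B}}$ is holomorphic at every point of $X$ lying over $q$; in particular it has no pole there.

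The only point requiring care is the bookkeeping across the normalization, since a singular point $q$ of $\mathbf{C}$ may have several preimages in $X$, and one must verify holomorphy at all of them. This is immediate in the present situation, because the preceding fact already asserts holomorphy at every point of $X$ outside $\pi^{-1}(\opn{Spec}(A_1,A_2))$, a statement phrased on $X$ rather than on $\mathbf{C}$. Thus I expect no real obstacle beyond correctly invoking that fact: the substance of the present statement is the logical observation that confining the joint spectrum to smooth points of $\mathbf{C}$ automatically keeps all poles of $S_{\mathcal{B}}$ away from the points of $X$ over the singular locus of $\mathbf{C}$.
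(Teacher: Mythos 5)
Your proposal is correct and matches the paper's (implicit) reasoning exactly: the statement is a standing assumption rather than a theorem, and the paper offers no proof because, as you observe, the ``this ensures'' clause follows immediately from the preceding fact that the lifted map $S_{\mathcal{B}}$ is holomorphic at every point of $X$ not lying over $\opn{Spec}(A_1,A_2)$. Your care about working on $X$ rather than on $\mathbf{C}$ (so that all preimages of a singular point are covered) is the right way to phrase the trivial step, and nothing further is needed.
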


We next discuss a class of functions that the transfer function belongs to. 

\begin{fact}
Let $X$ be a compact Riemann surface, and let $\pi_E:E \to X ,\pi_F:F \to X$ be two holomorphic vector bundles over $X$. 
In particular, $E$ and $F$ are complex manifolds, so it makes sense to talk about holomorphic and meromorphic functions between them. A map $T:E\to F$ is called a \textbf{meromorphic bundle map} if it is a meromorphic map which is also a bundle map, that is: $\pi_F \circ T = \pi_E$, and $T$ is linear over each fiber in which it is defined. 
\end{fact}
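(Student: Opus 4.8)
The statement is a \emph{definition} rather than a proposition, so strictly speaking there is nothing to prove: it fixes terminology for the class of maps $T \colon E \to F$ to which the transfer function $S_{\mathcal{B}}$ will be shown to belong. The only assertion that carries mathematical content is the parenthetical remark that $E$ and $F$ are complex manifolds, \emph{so that} the notions of holomorphic and meromorphic map between them are the standard ones from complex geometry. Accordingly, the plan is to substantiate that remark and then to check that the two conditions imposed in the definition---covering the identity and fiberwise linearity---are consistent with meromorphy, yielding the concrete description one actually computes with later.

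First I would verify that the total space of a holomorphic vector bundle over $X$ is a complex manifold. Given $\pi_E \colon E \to X$ of rank $n$, choose an open cover $\{U_\alpha\}$ of $X$ over which $E$ trivializes, with holomorphic transition functions $g_{\alpha\beta} \colon U_\alpha \cap U_\beta \to \opn{GL}_n(\mathbb{C})$. Then $E$ is obtained by gluing the charts $U_\alpha \times \mathbb{C}^n$ along the maps $(x,v) \mapsto (x, g_{\alpha\beta}(x)v)$, and these are biholomorphisms of open subsets of $U_\alpha \times \mathbb{C}^n$ precisely because the $g_{\alpha\beta}$ are holomorphic and invertible. Hence these charts endow $E$ with the structure of a complex manifold of dimension $1+n$ (using $\dim_{\mathbb{C}} X = 1$), and $\pi_E$ is a holomorphic submersion; the same applies to $F$. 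Once $E$ and $F$ are complex manifolds, a meromorphic map $T \colon E \to F$ is defined in the usual way, as a holomorphic map on the complement of an analytic subset.

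It then remains to reconcile the bundle condition $\pi_F \circ T = \pi_E$ together with fiberwise linearity with the meromorphy on the total space. Here I would observe that, because $T$ covers the identity and is linear on each fiber where it is defined, over a trivializing $U_\alpha$ it is given by a matrix of functions on $U_\alpha \subseteq X$, so that meromorphy of $T$ on $\pi_E^{-1}(U_\alpha)$ reduces to meromorphy of these entries on $U_\alpha$; their pole set is thus a discrete subset $Z \subset X$. Over $X \setminus Z$ the map $T$ restricts fiber by fiber to a holomorphic linear map $E_x \to F_x$, i.e. a holomorphic bundle map, and the whole datum is the same as a holomorphic section of $\opn{Hom}(E,F)$ over $X \setminus Z$ with poles along $Z$. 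Since the statement is definitional there is no genuine obstacle; the only point requiring a moment's care is this consistency---that ``meromorphic on the total space'' and ``fiberwise linear, covering the identity'' together single out exactly the meromorphic sections of $\opn{Hom}(E,F)$ over the base---which I would record as the working description of a meromorphic bundle map used in the sequel.
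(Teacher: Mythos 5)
You correctly identify that this ``fact'' is purely definitional, and the paper accordingly supplies no proof; your routine verifications (that the total space of a holomorphic vector bundle is a complex manifold, and that such a map amounts to a meromorphic section of $\opn{Hom}(E,F)$) are accurate and consistent with how the paper uses the notion. Nothing further is needed.
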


\begin{fact}
The transfer function of an operator vessel is an example of a meromorphic bundle map. 
For vector bundles which have determinantal representations, the converse is also true: every meromorphic bundle map (which is regular at the points at infinity) between such bundles is the transfer function of some operator vessel (see \cite{BV2,BV,BV3} for a proof of this fact). 
\end{fact}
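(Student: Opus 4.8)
The plan is to prove both implications by passing through the factorization of Proposition \ref{prop:Factorization}. Write $G$ for the transfer function of the controller system $\Controller$. By that proposition, when $\ClosedLoop$ is obtained from $\OpenLoop$ by an admissible feedback operator $F$, its transfer function is the composite $S\circ G$, in which $G\colon E_{\opn{in}}\to E_{\opn{in}}$ is a meromorphic bundle endomorphism of the \emph{input} bundle. The argument rests on two structural features of such a $G$. First, the controller system has feedthrough operator the identity, so, just as $S$ restricts to the invertible operator $D$ over $L_\infty$ (the property recorded just before assumption (\ref{fact:smooth})), the map $G$ restricts to the identity at every point of $X$ lying over $L_\infty$. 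Second, $G$ is invertible as a meromorphic bundle map and its inverse $G^{-1}$ is itself the transfer function of a system whose state operators are the \emph{open loop} operators $A_1,A_2$; consequently every zero of $G$ lies over $\opn{Spec}(A_1,A_2)$, that is, only where $S$ already has poles, while the poles of $G$ are the newly placed closed loop spectrum.

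For necessity, suppose $T=S\circ G$ for the controller transfer function $G$ of an admissible $F$. Restricting over $L_\infty$ and using $G=\opn{id}$ there gives $T=S$ over $L_\infty$, which is the second condition. For the first, I would compare left zero divisors point by point. Fix $p\in X$ and local frames, and write $S$, $G$ and $T=SG$ as matrices of germs over the discrete valuation ring $\mathcal{O}_{X,p}$; assumption (\ref{fact:smooth}) guarantees that all poles of $S$ sit over smooth points of $\mathbf{C}$, so this local picture records the full zero and pole data. Since the left zeros of $G$ occur only over $\opn{Spec}(A_1,A_2)$, at every point away from that set the left zeros of $T=SG$ are inherited from $S$ and can only be diminished, namely where a pole of $G$ (a newly placed pole) cancels a zero of $S$. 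Over $\opn{Spec}(A_1,A_2)$, where $S$ has poles and $G$ has zeros, a comparison of invariant factors shows that this cancellation likewise cannot raise the left zero order above that of $S$. Summing over $p$ yields $Z_\ell(T)\le Z_\ell(S)$, the containment of left zero divisors. Checking that the inequality runs in this direction, and never the other, is the delicate point, and it is precisely here that the controller structure of $G$ is used rather than mere genericity.

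For sufficiency, suppose $T$ satisfies both conditions. Because $S$ equals the invertible $D$ over $L_\infty$ it is generically invertible, so its meromorphic inverse exists and I set $G:=S^{-1}\circ T\colon E_{\opn{in}}\to E_{\opn{in}}$. The equality $T=S$ over $L_\infty$ forces $G=\opn{id}$ there. The poles of $S^{-1}$ are located exactly at the zeros of $S$, i.e. along $Z_\ell(S)$, and the hypothesis $Z_\ell(T)\le Z_\ell(S)$ says precisely that these poles absorb every left zero of $T$ lying away from $\opn{Spec}(A_1,A_2)$; hence $G=S^{-1}T$ has its left zeros confined to $\opn{Spec}(A_1,A_2)$, which is exactly the zero structure of a controller transfer function. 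Invoking the realization statement recorded at the end of the preceding subsection, $G$ is the transfer function of a vessel on $E_{\opn{in}}$ whose feedthrough is the identity; matching this realization against the structure of $\Controller$ produces an admissible feedback operator $F$ whose controller transfer function is $G$. Applying $F$ to $\OpenLoop$ and invoking Proposition \ref{prop:Factorization} once more, the closed loop transfer function is $S\circ G=S\circ S^{-1}\circ T=T$, as required.

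The main obstacle is the final step of sufficiency: showing that a meromorphic endomorphism $G$ of $E_{\opn{in}}$ which is the identity over $L_\infty$ and whose left zeros are confined to $\opn{Spec}(A_1,A_2)$ really is the controller transfer function of an \emph{admissible} feedback operator for the \emph{given} vessel $\OpenLoop$, and not merely the transfer function of some unrelated vessel on $E_{\opn{in}}$. This is where the rigidity of the vessel conditions (A1)--(A4) enters: the controller system is forced to share the input data $\widetilde{B},\sigma_1,\sigma_2,\gamma$ of $\OpenLoop$, and one must verify that the assignment $F\mapsto G$ surjects onto exactly those $G$ meeting the divisor condition. I expect the fully saturated, maximality hypothesis (\ref{fact:maximality}) together with (\ref{fact:regular}) to be what forces this surjectivity, and the determinantal representation of $E_{\opn{in}}$ to play an essential role in reconstructing $F$ from $G$. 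This surjectivity, rather than the normalization at infinity or the divisor bookkeeping, is where I expect most of the work to lie.
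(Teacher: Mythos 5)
Your proposal does not address the statement it was meant to prove. The statement is a realization fact with two halves: (i) the transfer function of a vessel is a meromorphic bundle map, and (ii) conversely, every meromorphic bundle map between bundles admitting determinantal representations is the transfer function of \emph{some} vessel. The paper offers no argument of its own here --- it records the fact and cites \cite{BV2,BV,BV3}, where the proof is a zero-pole interpolation construction: the state data $(A_1,A_2,\widetilde{B})$ and the rest of the vessel are built directly from the singular (zero-pole) data of the given bundle map and from the determinantal representations $\lambda_1\sigma_2-\lambda_2\sigma_1+\gamma$ of the bundles (this is the same construction the paper later leans on in the converse direction of Lemma \ref{lem:Interpolation}). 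What you have written is instead a sketch of the pole placement theorem, Theorem \ref{thm:main}: a characterization of which maps $T$ arise as closed-loop transfer functions $S_{\ClosedLoop}$ under admissible feedback. That is a different (and downstream) result. Half (i) of the fact, which follows from checking that the defining formula sends $E_{\opn{in}}(\lambda_1,\lambda_2)$ into $E_{\opn{out}}(\lambda_1,\lambda_2)$, is fiberwise linear, is independent of the auxiliary direction $(\xi_1,\xi_2)$, and extends meromorphically to $X$, is never touched in your text.

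Moreover, read as an argument, your proposal is circular relative to the assigned statement: in the sufficiency step you write ``invoking the realization statement recorded at the end of the preceding subsection, $G$ is the transfer function of a vessel on $E_{\opn{in}}$ whose feedthrough is the identity'' --- but that realization statement \emph{is} the fact under consideration, so you are assuming exactly what was to be proved. A secondary technical slip, relevant if your text were repurposed as a proof of Theorem \ref{thm:main}: the condition you impose on $G$, that its left zeros be ``confined to $\opn{Spec}(A_1,A_2)$,'' records only the \emph{location} of the singular data, whereas the paper's argument requires the full directional containment $\opn{LP}(G^{-1})\subseteq\opn{LP}(S)$ of left pole triples $(\phi,n,z_0)$ (orders and directions included), which is what Lemma \ref{lem:DivisorDiff} converts into $\opn{LZ}(T)\subseteq\opn{LZ}(S)$ and what Lemma \ref{lem:Interpolation} converts into a realization sharing $(A_1,A_2,\widetilde{B})$ with $\OpenLoop$; location over the joint spectrum alone is too weak to reconstruct an admissible $F$.
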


To discuss zero and pole data of meromorphic bundle maps, we follow the local case, as in \cite{BGR}. Given $p \in \mathbb{C}$, we denote by $\mathcal{O}_p$ the ring of germs of holomorphic functions at $p$, and by $\mathcal{O}^{\times}_p$ its subset consisting of germs $\phi$ such that $\phi(p) \ne 0$.

\begin{fact}
Let $A(z)$ be an order $m$ square matrix of meromorphic functions near some point $z_0$,
such that $\det A(z)$ is not identically zero. 
This implies that $A^{-1}$ is also a meromorphic matrix function near $z_0$.
Denoting by $\mathcal{O}^{\times,m}$ the set $\left\{\phi \in {\mathcal O}_{z_0}^{1 \times m} \colon \phi(z_0) \neq 0\right\}$,
given $\phi \in \mathcal{O}^{\times,m}_{z_0}$,
we say that $A$ has a left zero at the point $z_0$ in direction $\phi$ of order $n$, if 
$\phi(z) A(z) = z^n \psi(z)$ for some $\psi \in \mathcal{O}^{\times,m}_{z_0}$. 
We say that $A(z)$ has a left pole at $z_0$ in direction $\phi(z)$ of order $n$ if  $A^{-1}(z)$ has a left zero at $z_0$ in direction $\phi$ of order $n$.
\end{fact}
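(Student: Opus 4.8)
The statement has two components. The definitions of a left zero and a left pole in a given direction are purely terminological and require no argument; I observe only that the notion of a left pole presupposes that $A^{-1}(z)$ is itself a rational matrix function, so the sole assertion in need of justification is precisely the sentence that $A^{-1}(z)$ is rational whenever $A(z)$ is a square rational matrix with $\det A(z) \not\equiv 0$. The plan is to deduce this directly from the classical adjugate (Cramer) formula, so that the definitions that follow are well posed.

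In detail, I would argue as follows. The entries of $A(z)$ lie in the field $\mathbb{C}(z)$ of rational functions in one variable, so $A(z)$ is a matrix over a field. Its determinant $\det A(z)$ is a polynomial expression in those entries, hence again an element of $\mathbb{C}(z)$, and by hypothesis it is nonzero as an element of this field. Therefore $A(z)$ is invertible in $M_n(\mathbb{C}(z))$, with inverse given by
\[
A^{-1}(z) = \frac{1}{\det A(z)}\, \operatorname{adj}(A(z)),
\]
where $\operatorname{adj}(A(z))$ denotes the adjugate matrix. Each entry of $\operatorname{adj}(A(z))$ is, up to sign, an $(n-1)\times(n-1)$ minor of $A(z)$, hence a polynomial in the rational entries of $A(z)$ and thus itself a rational function; dividing these by the nonzero rational function $\det A(z)$ preserves rationality. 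Consequently every entry of $A^{-1}(z)$ is a rational function, which is the claim. There is no genuine obstacle here: the entire content is that $M_n(\mathbb{C}(z))$ is the full matrix ring over a field, in which invertibility is detected by the determinant and realized, via the adjugate, without ever leaving the ring of rational functions.
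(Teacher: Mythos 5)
Your proposal is correct: you rightly identify that the only non-definitional content is the rationality of $A^{-1}(z)$, and the adjugate argument over the field $\mathbb{C}(z)$ is the standard justification. The paper states this as a fact without proof, implicitly relying on exactly this classical observation, so your argument fills in the intended reasoning and does not diverge from it.
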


These definitions are generalized to the global case of meromorphic bundle maps by replacing holomorphic germs by germs of holomorphic sections. We define the divisor datum of a meromorphic bundle map as follows:

\begin{dfn}\label{dfn:leftpole}
Let $X$ be a compact Riemann surface,
and let $T: E \to F$ be a meromorphic bundle map between two holomorphic vector bundles over $X$.
\begin{enumerate}
\item The left zero set of $T$ is the set 
\[
\opn{LZ}(T) = \{ (\phi,n,z_0) \mid \mbox{$T$ has a left zero at $z_0$ of order $\ge n$ at direction $\phi$}\}.
\]
\item The left pole set of $T$ is the set 
\[
\opn{LP}(T) = \{ (\phi,n,z_0) \mid \mbox{$T$ has a left pole at $z_0$ of order $\ge n$ at direction $\phi$}\}.
\]
\end{enumerate}
\end{dfn}

\begin{fact}\label{fact:whereZerosLive}
Note that by definition, a left zero of $T:E \to F$ is a triple $(\phi,n,z_0)$,
where $\phi$ is a germ of an holomorphic section of the bundle $F^*$, the dual of the bundle $F$. 
Similarly, a left pole of $T$ is a triple $(\phi,n,z_0)$,
where $\phi$ is a germ of an holomorphic section of the bundle $E^*$.
\end{fact}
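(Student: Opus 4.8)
The plan is to unwind the local definitions of a left zero and a left pole recalled above and to check that the ``direction'' $\phi$, which a priori is only a local row-vector-valued germ attached to a choice of trivialization, is in fact a coordinate-independent object transforming as a section of the appropriate dual bundle. The asymmetry asserted in the statement --- target dual $F^{*}$ for a zero, source dual $E^{*}$ for a pole --- comes entirely from the fact that the defining relation $\phi(z)A(z)=z^{n}\psi(z)$ multiplies the local matrix $A(z)$ of $T$ on the left, so that a left zero records a covector of the \emph{target} of the map under consideration. The three steps are a type check, a change-of-frame computation, and a reduction of the pole case to the zero case by inversion.

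For the left-zero case I would choose a coordinate neighborhood $U$ of $z_{0}$ with a local coordinate $z$ vanishing at $z_{0}$, together with holomorphic frames $e=(e_{1},\dots ,e_{m})$ of $E$ and $f=(f_{1},\dots ,f_{n})$ of $F$ over $U$; in these frames $T$ is represented by a matrix $A(z)$ with $\det A\not\equiv 0$, and by definition $T$ has a left zero at $z_{0}$ in direction $\phi$ of order $n$ exactly when $\phi(z)A(z)=z^{n}\psi(z)$ for germs $\phi ,\psi$ with $\phi(z_{0})\neq 0$ and $\psi(z_{0})\neq 0$. Since $\phi$ multiplies $A$ on the left, $\phi$ must be a row vector of length $n=\operatorname{rk}F$, so its value at $z_{0}$ is a nonzero element of $(F_{z_{0}})^{*}$. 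I would then pass to a second pair of holomorphic frames $\tilde e=e\,g_{E}$, $\tilde f=f\,g_{F}$ with $g_{E},g_{F}$ holomorphic and invertible near $z_{0}$, and verify the transformation law $\tilde A=g_{F}^{-1}A\,g_{E}$. Setting $\tilde\phi=\phi\, g_{F}$ and $\tilde\psi=\psi\, g_{E}$ one computes $\tilde\phi\,\tilde A=\phi A\,g_{E}=z^{n}\psi\, g_{E}=z^{n}\tilde\psi$, so the defining relation, the order $n$, and the nonvanishing of the direction at $z_{0}$ are all preserved. As $\tilde\phi=\phi\,g_{F}$ is precisely the transition rule for the dual bundle $F^{*}$, the local directions assemble into a well-defined germ of a holomorphic section of $F^{*}$ at $z_{0}$, which is the assertion. (That $n$ is independent of the chosen uniformizer is immediate: replacing $z$ by $uz$ for a unit $u$ only rescales $\psi$ by the unit $u^{-n}$.)

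For the left-pole case I would invoke the definition directly: $T$ has a left pole at $z_{0}$ in direction $\phi$ of order $n$ if and only if the inverse bundle map $T^{-1}\colon F\to E$, which is again meromorphic because $\det A\not\equiv 0$, has a left zero at $z_{0}$ in direction $\phi$ of order $n$. Applying the left-zero case to $T^{-1}$, whose target bundle is now $E$, shows that $\phi$ is a germ of a holomorphic section of $E^{*}$, as claimed. The argument is essentially bookkeeping, and the point that must be handled with care is the placement of conventions: left multiplication pairs with the codomain, so the relevant dual is always that of the \emph{target} of the map under consideration, namely $F$ for a zero of $T$ and $E$ for a zero of $T^{-1}$. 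A minor subtlety is the rectangular case $\operatorname{rk}E\neq\operatorname{rk}F$; but the transfer functions of interest here are bundle maps between equal-rank bundles, so $A(z)$ is square and the displayed computation applies verbatim, while in any case the conclusion that $\phi$ is a covector of the target is convention-independent.
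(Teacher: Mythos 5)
Your proposal is correct and matches the paper's (implicit) justification: the paper states this as a fact following ``by definition,'' precisely because left multiplication $\phi(z)A(z)$ pairs the direction with the target bundle, and a left pole of $T$ is by definition a left zero of $T^{-1}\colon F\to E$, so the same reasoning applied to $T^{-1}$ yields $E^{*}$. Your change-of-frame verification that $\tilde\phi=\phi\,g_F$ is exactly the $F^{*}$ transition rule is a sound (if more detailed than the paper's) way of making the globalization precise; only the cosmetic reuse of $n$ for both the order and $\operatorname{rk}F$ should be cleaned up.
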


\subsection{Controllability and Observability of operator vessels}

\begin{fact}
A one dimensional linear system $\Sigma=(A,B,C,D;\mathcal{H},\mathcal{E},\mathcal{E}_*)$ is
called controllable if the pair $(A,B)$ is controllable.
Explicitly, this means that 
\[
\sum_{n=0}^{\infty} \opn{Im} A^nB = \mathcal{H}.
\]
Similarly, $\Sigma$ is called observable if the pair$(C,A)$ is observable.
That is, 
\[
\bigcap_{n=0}^{\infty} \ker(CA^n) = \{0\}.
\]
These linear algebra definitions are equivalent to the usual system-theoretic definitions of these terms. 
\end{fact}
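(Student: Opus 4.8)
The plan is to prove the two asserted equivalences separately, handling controllability first and then observability by an entirely parallel argument (or, alternatively, by duality). Throughout I use that $\mathcal{H}$ is finite dimensional, so that by the Cayley--Hamilton theorem every power $A^n$ with $n \ge \dim \mathcal{H}$ is a polynomial in $A^0,\dots,A^{\dim \mathcal{H}-1}$; hence the infinite sum $\sum_{n=0}^{\infty}\opn{Im}A^nB$ already equals the finite sum $\sum_{n=0}^{\dim\mathcal{H}-1}\opn{Im}A^nB$ (the column space of the Kalman controllability matrix), and likewise $\bigcap_{n=0}^{\infty}\ker(CA^n)=\bigcap_{n=0}^{\dim\mathcal{H}-1}\ker(CA^n)$. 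The bridge to the system-theoretic definitions is the variation-of-constants formula for $\Sigma$: with zero input the trajectory from $x_0$ is $x(t)=e^{tA}x_0$, while the state reached from the origin under an input $u$ on $[0,T]$ is $x(T)=\int_0^T e^{(T-\tau)A}Bu(\tau)\,d\tau$.

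For controllability, recall that the system-theoretic definition asks that for every pair of states $x_0,x_1\in\mathcal{H}$ there be a time $T>0$ and an input steering $x_0$ to $x_1$. Since $x(T)=e^{TA}x_0+\int_0^T e^{(T-\tau)A}Bu(\tau)\,d\tau$ and $e^{TA}$ is invertible, this holds for all $x_0,x_1$ if and only if the reachable subspace $\mathcal{R}$, consisting of all $\int_0^T e^{(T-\tau)A}Bu(\tau)\,d\tau$, equals $\mathcal{H}$. I would then show $\mathcal{R}=\sum_{n\ge 0}\opn{Im}A^nB$. The inclusion $\subseteq$ follows by expanding $e^{sA}=\sum_{k}\tfrac{s^k}{k!}A^k$, so each integrand lies in the span of the vectors $A^nBv$. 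For $\supseteq$ I argue via annihilators: if a functional $\phi\in\mathcal{H}^*$ vanishes on $\mathcal{R}$, then $\int_0^T\phi(e^{(T-\tau)A}Bv)\,u(\tau)\,d\tau=0$ for all $u$ and $v$, whence $\phi(e^{sA}Bv)=0$ for all $s$ by continuity and analyticity; differentiating repeatedly at $s=0$ gives $\phi(A^nBv)=0$ for every $n$, so $\phi$ annihilates $\sum_{n\ge 0}\opn{Im}A^nB$. Thus the two subspaces have the same annihilator in $\mathcal{H}^*$ and therefore coincide, and $\Sigma$ is controllable exactly when $\sum_{n\ge 0}\opn{Im}A^nB=\mathcal{H}$, as claimed.

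For observability, the system-theoretic definition asks that distinct initial states be distinguishable from the output when the input is known; subtracting two trajectories driven by the same input reduces this to the zero-input case, so $\Sigma$ is observable if and only if $x_0=0$ is the only state with $Ce^{tA}x_0=0$ for all $t\ge 0$. Expanding the exponential and differentiating at $t=0$ exactly as above shows that $Ce^{tA}x_0=0$ for all $t$ if and only if $CA^nx_0=0$ for all $n$, that is $x_0\in\bigcap_{n\ge 0}\ker(CA^n)$. Hence observability is equivalent to $\bigcap_{n\ge 0}\ker(CA^n)=\{0\}$. Alternatively, this may be deduced from the controllability statement by passing to the transposed system $(A^*,C^*,B^*)$, under which the unobservable subspace of $(C,A)$ is the annihilator of the reachable subspace of $(A^*,C^*)$.

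The computations are entirely classical, and the only points needing care are the two that link the analytic and algebraic pictures. The first is the annihilator argument identifying $\mathcal{R}$ with $\sum_{n\ge 0}\opn{Im}A^nB$ (and its observability analogue), where one must justify termwise differentiation of the convergent series $e^{sA}$ and the passage from ``vanishes against all inputs'' to ``vanishes pointwise''—both harmless in finite dimensions. The second is the reduction of full controllability to reachability from the origin, which rests on invertibility of $e^{TA}$. I expect this bookkeeping, rather than any genuine difficulty, to be the main thing to get right.
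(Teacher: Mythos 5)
The paper does not prove this statement at all: it is recorded as an unnumbered background \emph{fact}, with the equivalence of the linear-algebra and system-theoretic definitions asserted as classical (it is the standard material of, e.g., Sontag's book \cite{So}, which the paper cites). So there is no internal proof to compare against, and the only question is whether your blind argument is a correct rendering of the classical one. It is. The Cayley--Hamilton truncation, the variation-of-constants formula, the annihilator-plus-analyticity identification of the reachable subspace with $\sum_{n\ge 0}\opn{Im}A^nB$, the reduction of distinguishability to the zero-input case by linearity, and the optional duality argument via $(A^*,C^*,B^*)$ are exactly the standard proof. One step you pass over quickly but which your own computation already justifies: the claim that controllability for all pairs $(x_0,x_1)$ is equivalent to the reachable subspace filling $\mathcal{H}$ implicitly uses that the reachable subspace $\mathcal{R}_T$ is the same for every $T>0$; this is immediate from your annihilator argument, since it shows $\mathcal{R}_T=\sum_{n\ge 0}\opn{Im}A^nB$ for each fixed $T$, so no separate lemma is needed. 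Likewise your remark about passing from ``vanishes against all inputs'' to pointwise vanishing is handled correctly by testing against $u(\tau)=g(\tau)v$ with scalar $g$ and fixed $v\in\mathcal{E}$. In short: the proposal is correct and supplies, in full, the standard proof that the paper deliberately omits.
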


Similarly, for operator vessels, we define:

\begin{dfn}\label{def:COM}
Let $\mathcal{B} = (A_1,A_2,\tilde{B},C,D,\tilde{D},\sigma_1,\sigma_2,\gamma,\sigma_{1*},\sigma_{2*},\gamma_*;\mathcal{H},\mathcal{E},\tilde{\mathcal{E}},\mathcal{E}_*,\tilde{\mathcal{E}_*})$ be an operator vessel.
\begin{enumerate}
\item We say that $\mathcal{B}$ is controllable if 
\[
\sum_{n_1=0}^{\infty} \sum_{n_2=0}^{\infty} \opn{Im} {A_1}^{n_1}{A_2}^{n_2}\widetilde{B} = \mathcal{H}.
\]
\item We say that $\mathcal{B}$ is observable if 
\[
\bigcap_{n_1=0}^{\infty}\bigcap_{n_2=0}^{\infty} \ker (C{A_1}^{n_1}{A_2}^{n_2}) = \{0\}.
\]
\item The operator vessel $\mathcal{B}$ is called minimal if it is both controllable and observable.
\end{enumerate}
\end{dfn}

\begin{fact}
In \cite[Proposition 1.11]{BV1}, it was shown that as in the one dimensional case, 
one may give system-theoretic definitions to these terms, imitating the usual ones in terms of the controllable subspace and unobservable subspace, and that they are equivalent to Definition \ref{def:COM}. As we will not need these in this paper, we omit recalling them.
\end{fact}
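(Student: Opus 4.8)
The plan is to transport the classical one-dimensional argument to the setting of two commuting state operators, exploiting the hypothesis $A_1 A_2 = A_2 A_1$ throughout. First I would make the system-theoretic definitions precise. The \emph{controllable subspace} $\mathcal{C} \subseteq \mathcal{H}$ is declared to be the set of all states reachable from the zero initial state $x(0,0)=0$ by feeding in an admissible input, and the \emph{unobservable subspace} $\mathcal{U} \subseteq \mathcal{H}$ is the set of all initial states $x(0,0)$ whose zero-input response satisfies $y(t_1,t_2) \equiv 0$. The system is then called controllable (resp.\ observable) in the system-theoretic sense precisely when $\mathcal{C} = \mathcal{H}$ (resp.\ $\mathcal{U} = \{0\}$), and the goal is to identify these conditions with those of Definition \ref{def:COM}.

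The key algebraic step is to show that $\mathcal{C}$ coincides with the smallest subspace of $\mathcal{H}$ which is invariant under both $A_1$ and $A_2$ and contains $\opn{Im} \widetilde{B}$, and that this subspace in turn equals $\sum_{n_1, n_2 \ge 0} \opn{Im} A_1^{n_1} A_2^{n_2} \widetilde{B}$. Commutativity is what makes the latter expression invariant under each $A_i$, since applying $A_1$ or $A_2$ to a summand $A_1^{n_1} A_2^{n_2} \widetilde{B}$ lands in another summand of the same form. That it is the smallest such subspace is immediate, because any invariant subspace containing $\opn{Im} \widetilde{B}$ must contain every $\opn{Im}(A_1^{n_1} A_2^{n_2} \widetilde{B})$. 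By the Cayley--Hamilton theorem applied to $A_1$ and to $A_2$, only finitely many exponents contribute, so the double sum stabilizes and is genuinely a finite-dimensional subspace of $\mathcal{H}$.

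To connect $\mathcal{C}$ with reachability I would solve the state equations explicitly. Under the vessel conditions the overdetermined system is compatible, and for commuting $A_1, A_2$ the zero-initial-state solution with input $u$ is given by an integral against the two-parameter semigroup $e^{A_1 t_1 + A_2 t_2}$, whose entries are analytic in $(t_1, t_2)$ and whose joint Taylor coefficients are precisely the operators $A_1^{n_1} A_2^{n_2} \widetilde{B}$, after absorbing $B_1, B_2$ into $\widetilde{B}$. Expanding the reachable states in this Taylor series, and using analyticity to pass between ``reachable by some input'' and ``lying in the span of the coefficient operators,'' shows $\mathcal{C} = \sum_{n_1, n_2} \opn{Im} A_1^{n_1} A_2^{n_2} \widetilde{B}$, which is exactly the controllability condition. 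The observability statement is obtained by the dual argument: $\mathcal{U}$ is the largest jointly $(A_1, A_2)$-invariant subspace contained in $\ker C$, which by the same commutativity and Cayley--Hamilton reasoning equals $\bigcap_{n_1, n_2} \ker(C A_1^{n_1} A_2^{n_2})$, and a state is unobservable exactly when its zero-input output expansion, with coefficients $C A_1^{n_1} A_2^{n_2}$, vanishes identically.

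I expect the main obstacle to be the analytic reachability step in the two-parameter case: one must verify that the overdetermined pair of evolution equations is solvable, which is exactly where the commutation relation $A_1 A_2 = A_2 A_1$ together with the vessel compatibility conditions enters decisively, and then that the two-parameter family of reachable states fills out the full algebraic span rather than a proper subspace. In the one-dimensional case this is handled by the analyticity of $t \mapsto e^{At}$; here the same idea works but requires checking that the joint coefficients $A_1^{n_1} A_2^{n_2} \widetilde{B}$ are all exhausted as $(t_1,t_2)$ ranges over a neighborhood of the origin, which again reduces to a finite computation via Cayley--Hamilton.
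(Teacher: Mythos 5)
First, note that the paper itself offers no proof of this statement: it is an attribution to \cite[Proposition 1.11]{BV1}, and the authors explicitly omit both the system-theoretic definitions and the equivalence argument. So your proposal can only be measured against the cited result, and there it has a genuine gap, concentrated in the controllability half. You treat the input $u(t_1,t_2)$ as free and claim that the reachable states are spanned by the joint Taylor coefficients $A_1^{n_1}A_2^{n_2}\widetilde{B}$ of the two-parameter flow. But in an overdetermined 2D system the input is \emph{not} free: equating the mixed partials $\partial_{t_2}\partial_{t_1}x = \partial_{t_1}\partial_{t_2}x$ and using the vessel conditions (A1), (A2) with $B_i = \widetilde{B}\sigma_i$ forces $\widetilde{B}\left(\sigma_2 \partial_{t_1}u - \sigma_1\partial_{t_2}u + \gamma u\right) = 0$, so the admissible inputs are (up to $\ker\widetilde{B}$) solutions of the input compatibility PDE $\sigma_2 \partial_{t_1}u - \sigma_1\partial_{t_2}u + \gamma u = 0$. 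Consequently one cannot prescribe the joint jets of $u$ at a point freely, and your step ``pass between reachable by some input and lying in the span of the coefficient operators'' is exactly the point at issue, not a routine analyticity argument: a priori the constrained input class could reach only a proper subspace of $\sum_{n_1,n_2}\opn{Im}A_1^{n_1}A_2^{n_2}\widetilde{B}$. Establishing that it does not is the actual content of \cite[Proposition 1.11]{BV1}. Your closing paragraph flags solvability of the overdetermined pair as ``the main obstacle,'' but flagging it is not resolving it, and commutativity plus Cayley--Hamilton alone do not close it.

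The repair uses machinery that is in fact present in this paper. For a regular direction $\xi$ (assumption (\ref{fact:regular})), $\sigma_\xi = \xi_1\sigma_1+\xi_2\sigma_2$ is invertible, so the compatibility PDE is non-characteristic transverse to lines in direction $\xi$: one may prescribe $u$ freely along such a line, all transverse derivatives then being determined. Along that line the system is a classical 1D system with data $(A_\xi, B_\xi)$, $B_\xi = \widetilde{B}\sigma_\xi$, and the classical argument gives the reachable span $\sum_n \opn{Im}A_\xi^n B_\xi$. The vessel condition (A2) then collapses the double sum to this single-direction span --- this is precisely the dual of the observability lemma proved in the paper after Proposition \ref{prop:MinimalRealization}, where (A3) is used to write $CA_1, CA_2$ in the form $E_1C + E_2CA_\xi$; the same rearrangement applied to (A2), using invertibility of $\sigma_\xi$, shows $\opn{Im}A_1^{n_1}A_2^{n_2}\widetilde{B} \subseteq \sum_k \opn{Im}A_\xi^k B_\xi$. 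By contrast, the observability half of your argument is essentially sound as stated, since the zero-input evolution $x(t_1,t_2) = e^{A_1t_1+A_2t_2}x_0$ involves no constraint and the output compatibility PDE is then automatic, so unobservability of $x_0$ is directly equivalent to the vanishing of all $CA_1^{n_1}A_2^{n_2}x_0$. One further small inaccuracy: ``absorbing $B_1, B_2$ into $\widetilde{B}$'' silently replaces $\opn{Im}\widetilde{B}\sigma_i$ by $\opn{Im}\widetilde{B}$, which is unjustified since the individual $\sigma_i$ need not be invertible; only $\sigma_\xi$ for a regular direction is, which again points to the regular-direction argument as the necessary route.
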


\begin{fact}\label{fact:CH}
If a vessel $\mathcal{B}$ is either controllable or observable,
the generalized Generalized Cayley-Hamilton Theorem (\cite[Section 8.2]{LKMV}) implies that $\opn{Spec}(A_1,A_2) \subseteq \mathbf{C}$.
\end{fact}

\begin{fact}
Given a vessel 
\[
\mathcal{B} = (A_1,A_2,\widetilde{B},C,D,\widetilde{D},\sigma_1,\sigma_2,\gamma,\sigma_{1*},\sigma_{2*},\gamma_*;\mathcal{H},\mathcal{E},\widetilde{\mathcal{E}},\mathcal{E}_*,\widetilde{\mathcal{E}_*})
\]
we say that a direction $(\xi_1,\xi_2) \in \mathbb{P}^2\mathbb{C}$ is a regular direction for $\mathcal{B}$ if the operator $\sigma_{\xi}:=\xi_1\sigma_1 + \xi_2\sigma_2$ is invertible. By the vessel condition (A4), this implies that the operator $\sigma_{*\xi} = \xi_1\sigma_{1*} + \xi_2\sigma_{2*}$ is also invertible. 
\end{fact}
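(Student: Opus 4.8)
The plan is to read off the claimed invertibility directly from the linkage identities in the vessel condition (A4), using only the first two of those identities together with the invertibility of $D$ and $\widetilde{D}$ that is part of the definition of a vessel. Concretely, I would start from the two equalities $\sigma_{1*}D = \widetilde{D}\sigma_1$ and $\sigma_{2*}D = \widetilde{D}\sigma_2$ furnished by (A4). Multiplying the first by $\xi_1$, the second by $\xi_2$, and adding, I use the linearity of both sides in the $\sigma$-operators to collect terms into
\[
(\xi_1\sigma_{1*} + \xi_2\sigma_{2*})\,D = \widetilde{D}\,(\xi_1\sigma_1 + \xi_2\sigma_2),
\]
that is, $\sigma_{*\xi}\,D = \widetilde{D}\,\sigma_\xi$ in the notation of the statement. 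Since $D$ is invertible by hypothesis, I can solve for $\sigma_{*\xi}$ and rewrite this as
\[
\sigma_{*\xi} = \widetilde{D}\,\sigma_\xi\,D^{-1}.
\]

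It then remains only to observe that the right-hand side is a composite of three invertible maps: $\widetilde{D}$ and $D^{-1}$ are invertible because $D$ and $\widetilde{D}$ are assumed invertible in the vessel data, while $\sigma_\xi$ is invertible precisely by the assumption that $(\xi_1,\xi_2)$ is a regular direction. A product of invertible linear maps is invertible, so $\sigma_{*\xi}$ is invertible, which is exactly the assertion.

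There is no real obstacle here; the only point worth verifying is that \emph{invertible} is the appropriate notion, i.e. that the maps in question go between spaces of equal dimension so that being bijective makes sense. This is automatic: $\dim\mathcal{E} = \dim\widetilde{\mathcal{E}}$ and $\dim\mathcal{E}_* = \dim\widetilde{\mathcal{E}}_*$ are built into the definition, and the invertibility of $D:\mathcal{E}\to\mathcal{E}_*$ forces $\dim\mathcal{E} = \dim\mathcal{E}_*$; hence all four external spaces share a common dimension, $\sigma_\xi$ and $\sigma_{*\xi}$ are genuinely square maps, and the conjugation-type relation $\sigma_{*\xi} = \widetilde{D}\sigma_\xi D^{-1}$ transfers invertibility as claimed. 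Note that the remaining coupling identity of (A4), the one involving $\gamma_*$, $\gamma$, $C$ and $\widetilde{B}$, plays no role in this particular statement and can be ignored.
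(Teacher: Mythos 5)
Your proof is correct and is precisely the argument the paper intends by its inline remark ``By the vessel condition (A4)'': the first two identities of (A4) combine linearly to give $\sigma_{*\xi}D = \widetilde{D}\sigma_{\xi}$, whence $\sigma_{*\xi} = \widetilde{D}\sigma_{\xi}D^{-1}$ is a product of invertible maps. Your dimension check (that invertibility of $D$ forces all four external spaces to have a common dimension, so the relation genuinely transfers invertibility) is a sensible addition but raises no issue beyond what the paper's definition already guarantees.
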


\begin{fact}\label{fact:regular}
We will make the following assumption: all vessels in this paper have regular directions.
Equivalently, the function $\det(\sigma_{\xi})$ (equivalently, $\det(\sigma_{*\xi})$) is not identically zero.
\end{fact}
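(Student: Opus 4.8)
The statement packages two equivalences, which I would verify in turn. The first asserts that $\mathcal{B}$ admits a regular direction precisely when the function $\det(\sigma_\xi)$ does not vanish identically; this is essentially a reformulation. The plan is to note that $\sigma_\xi = \xi_1\sigma_1 + \xi_2\sigma_2$ depends linearly on $(\xi_1,\xi_2)$, so that, after fixing bases realizing the identification allowed by $\dim\mathcal{E}=\dim\widetilde{\mathcal{E}}$, the function $\det(\sigma_\xi)$ is a homogeneous polynomial in $\xi_1,\xi_2$ of degree $\dim\mathcal{E}$. Whether $\det(\sigma_\xi)\ne 0$ at a given point is independent of the chosen bases, and a homogeneous polynomial fails to be identically zero exactly when it is nonzero at some $(\xi_1,\xi_2)$. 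Since $\det(\sigma_\xi)\ne 0$ at $(\xi_1,\xi_2)$ is by definition the assertion that $(\xi_1,\xi_2)$ is a regular direction, the existence of a regular direction and the non-vanishing of $\det(\sigma_\xi)$ are literally the same condition.

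For the second equivalence, relating $\det(\sigma_\xi)$ to $\det(\sigma_{*\xi})$, I would extract an intertwining relation from the vessel condition (A4). Forming the combination $\xi_1(\sigma_{1*}D = \widetilde{D}\sigma_1) + \xi_2(\sigma_{2*}D = \widetilde{D}\sigma_2)$ yields, for every $(\xi_1,\xi_2)$, the identity
\[
\sigma_{*\xi}\,D \;=\; \widetilde{D}\,\sigma_\xi .
\]
Because the vessel axioms require $D$ and $\widetilde{D}$ to be invertible, this exhibits $\sigma_{*\xi} = \widetilde{D}\,\sigma_\xi\,D^{-1}$ as a conjugate of $\sigma_\xi$ by the two fixed isomorphisms $D$ and $\widetilde{D}$. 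Consequently $\sigma_{*\xi}$ is invertible if and only if $\sigma_\xi$ is, direction by direction, and passing to determinants gives $\det(\sigma_{*\xi}) = \det(\widetilde{D})\,\det(D)^{-1}\,\det(\sigma_\xi)$, so the two polynomials differ by the nonzero scalar $\det(\widetilde{D})\,\det(D)^{-1}$ and in particular share the same vanishing locus. This both recovers the one-directional implication already recorded in the fact immediately preceding the statement and supplies its converse.

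The only point requiring care — and the nearest thing to an obstacle — is the bookkeeping of domains and codomains, since a priori $\sigma_\xi\colon\mathcal{E}\to\widetilde{\mathcal{E}}$ and $\sigma_{*\xi}\colon\mathcal{E}_*\to\widetilde{\mathcal{E}}_*$ act between nominally different spaces, so that $\det$ and the conjugation $\widetilde{D}\,\sigma_\xi\,D^{-1}$ are meaningful only after the identifications built into the vessel data. Here I would first observe that the invertibility of $D\colon\mathcal{E}\to\mathcal{E}_*$ and of $\widetilde{D}\colon\mathcal{E}_*\to\widetilde{\mathcal{E}}_*$ forces all four of $\dim\mathcal{E}$, $\dim\widetilde{\mathcal{E}}$, $\dim\mathcal{E}_*$, $\dim\widetilde{\mathcal{E}}_*$ to coincide, and then fix bases compatible with these equalities, check that the intertwining $\sigma_{*\xi}D=\widetilde{D}\sigma_\xi$ of (A4) is consistent with them, and read the determinant identity in those coordinates. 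With this bookkeeping in place both equivalences are immediate and no genuinely hard step remains: the statement is a standing normalization whose justification is purely linear-algebraic.
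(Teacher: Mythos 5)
Your verification is correct and follows the same route the paper takes implicitly: the statement is a standing assumption whose only nontrivial content, the equivalence of $\det(\sigma_\xi)\not\equiv 0$ with $\det(\sigma_{*\xi})\not\equiv 0$, is justified in the paper by the intertwining $\sigma_{*\xi}D=\widetilde{D}\sigma_\xi$ drawn from vessel condition (A4) in the immediately preceding fact, exactly as you derive it (with the first equivalence being, as you say, a tautology about a homogeneous polynomial in $(\xi_1,\xi_2)$). Your extra care with the dimension bookkeeping via the invertibility of $D$ and $\widetilde{D}$ is sound and simply makes explicit what the paper leaves unstated.
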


\begin{fact}
Given a direction $\xi = (\xi_1,\xi_2)$, we will shorten notation and set
\[
A_{\xi} = \xi_1A_1 + \xi_2A_2, \quad B_{\xi} = \widetilde{B}(\xi_1\sigma_1+\xi_2\sigma_2).
\]

Using these notations, we define:
\begin{equation}\label{eqn:RestrictedFunction}
S_{\xi}(\lambda) = D+C(\lambda I-A_{\xi})^{-1}B_{\xi}
\end{equation}
The function $S_{\xi}(\lambda)$ is rational matrix function, called the restricted transfer function of $\mathcal{B}$ at direction $(\xi_1,\xi_2)$.
\end{fact}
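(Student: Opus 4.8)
The plan is to reduce the claim to the classical fact that the resolvent of a finite square matrix is a rational matrix function of the spectral parameter. Since all the spaces in Definition \ref{dfn:vessel} are finite dimensional, upon choosing bases the operator $A_{\xi} = \xi_1 A_1 + \xi_2 A_2$ is represented by an $n \times n$ matrix over $\mathbb{C}$, where $n = \dim \mathcal{H}$, while $C$, $B_{\xi}$ and $D$ are represented by fixed constant matrices of the appropriate sizes. Thus the only $\lambda$-dependence in $S_{\xi}(\lambda)$ enters through the factor $(\lambda I - A_{\xi})^{-1}$, and it suffices to show that this factor has rational entries.

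First I would invoke Cramer's rule in the form $(\lambda I - A_{\xi})^{-1} = \det(\lambda I - A_{\xi})^{-1} \cdot \opn{adj}(\lambda I - A_{\xi})$, valid on the set where the denominator is nonzero. The entries of the adjugate are, up to sign, the $(n-1)\times(n-1)$ minors of $\lambda I - A_{\xi}$; since each entry of $\lambda I - A_{\xi}$ is a polynomial in $\lambda$ of degree at most one, each such minor is a polynomial in $\lambda$, and hence every entry of $\opn{adj}(\lambda I - A_{\xi})$ is a polynomial.

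Next I would verify that the common denominator is not identically zero. The denominator $\det(\lambda I - A_{\xi})$ is precisely the characteristic polynomial of $A_{\xi}$, a monic polynomial of degree $n$ in $\lambda$; in particular it is a nonzero element of $\mathbb{C}[\lambda]$, whose zero set is the finite set of eigenvalues of $A_{\xi}$. Consequently each entry of $(\lambda I - A_{\xi})^{-1}$ is the quotient of a polynomial by this fixed nonzero characteristic polynomial, that is, a rational function of $\lambda$ with poles contained in the spectrum of $A_{\xi}$.

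Finally, I would assemble the full expression: left multiplication by the constant matrix $C$, right multiplication by the constant matrix $B_{\xi}$, and addition of the constant matrix $D$ each send a matrix function with rational entries to a matrix function with rational entries, since these operations are $\mathbb{C}$-linear combinations of the entries. Therefore $S_{\xi}(\lambda) = D + C(\lambda I - A_{\xi})^{-1} B_{\xi}$ has rational entries and is a rational matrix function, as claimed. The argument is entirely elementary; the only point requiring any care is verifying that the common denominator $\det(\lambda I - A_{\xi})$ is not the zero polynomial, which is immediate from its being monic of degree $\dim \mathcal{H}$.
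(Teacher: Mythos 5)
Your proof is correct and is exactly the standard argument the paper implicitly relies on: the paper states this fact without proof, and the Cramer's rule expansion of the resolvent $(\lambda I - A_{\xi})^{-1}$, with the monic characteristic polynomial as the nonvanishing common denominator, is the routine justification. Nothing is missing; your added care about the denominator being a nonzero polynomial is the only point of substance and you handle it correctly.
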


\begin{fact}
Given a rational matrix function $S(\lambda)$, 
a system theoretic realization of $S$ is a presentation:
\[
S(\lambda) = D+C(\lambda I-A)^{-1}B.
\]
Such a presentation is called minimal if the square matrix $A$ has minimal size along all possible realizations of $S$. By \cite[Theorem 4.1.4]{BGR}, this happens if and only if the pair $(C,A)$ is observable, and the pair $(A,B)$ is controllable.
\end{fact}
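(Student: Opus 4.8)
\emph{Proof proposal.} The plan is to establish the classical Kalman minimality criterion in the two directions separately: for the implication ``minimal $\Rightarrow$ controllable and observable'' I would reduce a deficient realization to a strictly smaller one, and for the converse I would exhibit a realization-independent lower bound on the state dimension, the rank of the block Hankel matrix, and show a controllable-and-observable realization attains it.

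First I would treat the direction ``minimal $\Rightarrow$ controllable and observable'' by contraposition. Suppose $(A,B,C,D)$ with $A\in M_n(\mbb{C})$ fails to be controllable, so the controllable subspace $\mcal{V}=\sum_{k\ge 0}\opn{Im}(A^kB)$ is a proper $A$-invariant subspace containing $\opn{Im}(B)$. Choosing a basis of $\mbb{C}^n$ adapted to $\mcal{V}$ puts the data in block-triangular form
\[
A=\begin{pmatrix} A_{11} & A_{12} \\ 0 & A_{22}\end{pmatrix},\qquad B=\begin{pmatrix} B_1 \\ 0\end{pmatrix},\qquad C=\begin{pmatrix} C_1 & C_2\end{pmatrix},
\]
and computing $(\lambda I-A)^{-1}$ in this block form yields $S(\lambda)=D+C_1(\lambda I-A_{11})^{-1}B_1$. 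Thus $(A_{11},B_1,C_1,D)$ realizes the same $S$ with strictly smaller state space, so the original realization is not minimal. The non-observable case is dual: the unobservable subspace $\bigcap_{k\ge 0}\ker(CA^k)$ is $A$-invariant, and passing to the quotient produces a strictly smaller realization. Hence minimality forces both conditions.

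For the converse I would introduce the Markov parameters $h_k=CA^kB$, which are realization-independent because they are exactly the Laurent coefficients of $S$ at $\lambda=\infty$, namely $S(\lambda)=D+\sum_{k\ge 0}h_k\lambda^{-(k+1)}$. Assembling them into the block Hankel matrix $H=(h_{i+j})_{i,j\ge 0}$ and factoring $H=\Omega\,\Gamma$ with
\[
\Omega=\begin{pmatrix} C \\ CA \\ CA^2 \\ \vdots\end{pmatrix},\qquad \Gamma=\begin{pmatrix} B & AB & A^2B & \cdots\end{pmatrix},
\]
gives $\opn{rank}(H)\le\min(\opn{rank}\,\Omega,\opn{rank}\,\Gamma)\le n$ for every realization of size $n$. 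Observability says precisely that $\Omega$ is injective (full column rank $n$) and controllability that $\Gamma$ is surjective (full row rank $n$); when both hold, left-invertibility of $\Omega$ and right-invertibility of $\Gamma$ force $\opn{rank}(H)=n$. Since $\opn{rank}(H)$ depends only on $S$, every realization has size at least $\opn{rank}(H)$, and a controllable-and-observable realization attains this bound, hence is minimal.

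The hard part is this converse, and within it the verification that a controllable-and-observable realization genuinely meets the Hankel bound $\opn{rank}(H)=n$ rather than merely being non-degenerate fiberwise. The key technical point is that the infinite matrices may be replaced by their first $n$ block rows and columns: by the Cayley--Hamilton theorem the sums $\sum_{k=0}^{\infty}\opn{Im}(A^kB)$ and intersections $\bigcap_{k=0}^{\infty}\ker(CA^k)$ stabilize at $k=n-1$, so the finite truncations of $\Omega$ and $\Gamma$ already realize the full ranks and their product is the relevant truncation of $H$. An alternative route avoiding the explicit rank count is the state-space isomorphism theorem: any two controllable-and-observable realizations of $S$ satisfy $\Omega\Gamma=\Omega'\Gamma'$ and $\Omega A\Gamma=\Omega' A'\Gamma'$, and one-sided invertibility of $\Omega,\Gamma$ produces an explicit similarity $T$ with $A'=TAT^{-1}$, $B'=TB$, $C'=CT^{-1}$; this shows all such realizations share one dimension, which by the first direction is the minimal one.
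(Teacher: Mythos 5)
Your proposal is correct, but there is nothing in the paper to compare it against: the statement is quoted as a known classical fact, with the proof delegated entirely to the citation \cite[Theorem 4.1.4]{BGR}, so the paper contains no internal argument of its own. What you have supplied is the standard Kalman-style proof, and both directions are sound. The contrapositive direction is fine: compression to the controllable subspace $\mcal{V}=\sum_{k\ge 0}\opn{Im}(A^kB)$ gives the block-triangular form you wrote, and $(\lambda I-A)^{-1}B$ indeed has zero lower block, so $S(\lambda)=D+C_1(\lambda I-A_{11})^{-1}B_1$; the quotient by the unobservable subspace works dually because that subspace lies in $\ker C$, so $\bar C$ is well defined. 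For the converse, the realization-independence of the Markov parameters $h_k=CA^kB$ as Laurent coefficients at infinity, the bound $\opn{rank}(H)\le n$ from $H=\Omega\Gamma$, and the equality $\opn{rank}(H)=n$ when $\Omega$ is injective and $\Gamma$ is surjective (since then $\opn{Im}(\Omega\Gamma)=\Omega(\mbb{C}^n)$ has dimension $n$) are all correct; your Cayley--Hamilton remark properly disposes of the only finiteness issue, though strictly speaking the infinite-matrix rank argument already closes the gap without truncation. Your alternative route via the state-space isomorphism theorem is also valid: the map $T$ is well defined because $\ker\Gamma\subseteq\ker\Gamma'$ follows from $\Omega\Gamma=\Omega'\Gamma'$ and injectivity of $\Omega'$, and combined with your first direction it shows every controllable-and-observable realization has the minimal dimension. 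In short, you have given a complete, self-contained proof of a result the paper merely imports, and your argument is essentially the one found in the cited reference.
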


\begin{prop}\label{prop:MinimalRealization}
Let $\mathcal{B}$ be a minimal vessel, 
and let $(\xi_1,\xi_2) \in \mathbb{P}^2\mathbb{C}$ be a regular direction for $\mathcal{B}$. 
Then the realization (\ref{eqn:RestrictedFunction}) of the restricted transfer function $S_{\xi}$ is minimal.
\end{prop}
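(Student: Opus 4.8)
The plan is to invoke the cited characterization of minimal realizations (\cite[Theorem 4.1.4]{BGR}): the realization (\ref{eqn:RestrictedFunction}) is minimal if and only if the pair $(C,A_\xi)$ is observable and the pair $(A_\xi,B_\xi)$ is controllable. So the task reduces to deducing these two one-dimensional properties, in the single direction $\xi$, from the minimality (controllability and observability) of the vessel $\mathcal{B}$.

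First I would treat controllability. Since $\xi$ is a regular direction, $\sigma_\xi = \xi_1\sigma_1+\xi_2\sigma_2$ is invertible, so $\operatorname{Im} B_\xi = \operatorname{Im}\widetilde B\sigma_\xi = \operatorname{Im}\widetilde B$, and the controllable subspace $V := \sum_{n\ge 0}\operatorname{Im} A_\xi^n B_\xi$ equals $\sum_{n\ge 0}\operatorname{Im}(A_\xi^n\widetilde B)$. Plainly $V \subseteq \mathcal H_c := \sum_{n_1,n_2}\operatorname{Im} A_1^{n_1}A_2^{n_2}\widetilde B = \mathcal H$ (by controllability of $\mathcal B$), so it remains to prove the reverse inclusion. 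For this it suffices to show that $V$ is invariant under both $A_1$ and $A_2$: being an $(A_1,A_2)$-invariant subspace containing $\operatorname{Im}\widetilde B$, it must then contain $\mathcal H_c$. The key tool is a polarized form of the vessel condition (A2). Bilinearizing (A2) in the directions $\xi$ and $\eta$ and using (A1) yields $A_\xi\widetilde B\sigma_\eta - A_\eta\widetilde B\sigma_\xi = -(\xi_1\eta_2-\xi_2\eta_1)\widetilde B\gamma$; since $\sigma_\xi$ is invertible this rearranges to $A_\eta\widetilde B = A_\xi\widetilde B\sigma_\eta\sigma_\xi^{-1} + (\xi_1\eta_2-\xi_2\eta_1)\widetilde B\gamma\sigma_\xi^{-1}$. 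Taking $\eta = (1,0)$ and $\eta = (0,1)$ expresses $A_1\widetilde B$ and $A_2\widetilde B$ in terms of $A_\xi\widetilde B$ and $\widetilde B$. Because $A_1$ and $A_2$ commute with $A_\xi$, applying $A_\xi^n$ shows $A_i A_\xi^n\widetilde B = A_\xi^n A_i\widetilde B$ has image inside $\operatorname{Im}(A_\xi^{n+1}\widetilde B)+\operatorname{Im}(A_\xi^n\widetilde B)\subseteq V$, giving $A_i V\subseteq V$ for $i=1,2$, as required; hence $V=\mathcal H$ and $(A_\xi,B_\xi)$ is controllable.

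The observability statement is entirely dual, proved by the same mechanism applied to the unobservable subspace $\mathcal N_\xi := \bigcap_{n\ge 0}\ker(CA_\xi^n)$. Condition (A4) guarantees (as noted in the excerpt) that $\sigma_{*\xi} = \xi_1\sigma_{1*}+\xi_2\sigma_{2*}$ is invertible, and polarizing (A3) gives $\sigma_{*\xi}CA_\eta - \sigma_{*\eta}CA_\xi = (\xi_1\eta_2-\xi_2\eta_1)\gamma_*C$, whence $CA_\eta = \sigma_{*\xi}^{-1}\sigma_{*\eta}CA_\xi + (\xi_1\eta_2-\xi_2\eta_1)\sigma_{*\xi}^{-1}\gamma_*C$. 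For $v\in\mathcal N_\xi$ and any $n$, commutativity gives $CA_\xi^n A_\eta v = CA_\eta A_\xi^n v$, and substituting this identity together with $CA_\xi^n v = CA_\xi^{n+1}v = 0$ shows the expression vanishes. Thus $\mathcal N_\xi$ is $(A_1,A_2)$-invariant and contained in $\ker C$, so it lies inside the unobservable subspace $\bigcap_{n_1,n_2}\ker(CA_1^{n_1}A_2^{n_2}) = \{0\}$ of $\mathcal B$; hence $\mathcal N_\xi = \{0\}$ and $(C,A_\xi)$ is observable.

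The main obstacle is conceptual rather than computational: one must recognize that the correct objects to track are the full $A_\xi$-invariant subspaces $V$ and $\mathcal N_\xi$, not merely the generating image $\operatorname{Im}\widetilde B$ and the kernel $\ker C$, and that the polarized vessel identities (A2) and (A3) supply exactly the \emph{commutation up to lower-order terms} needed to upgrade $A_\xi$-invariance to joint $(A_1,A_2)$-invariance. Once these two identities are in hand, together with the invertibility of $\sigma_\xi$ and $\sigma_{*\xi}$ and the commutativity (A1), both inclusions follow immediately.
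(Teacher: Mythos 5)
Your proposal is correct and takes essentially the same route as the paper: both reduce the claim via \cite[Theorem 4.1.4]{BGR} to observability of $(C,A_{\xi})$ and controllability of $(A_{\xi},B_{\xi})$, and both extract these from the vessel conditions (A3) and (A2), rearranged using the invertibility of $\sigma_{*\xi}$ and $\sigma_{\xi}$, to express $CA_i$ (resp.\ $A_i\widetilde{B}$) as a left combination of $CA_{\xi}$ and $C$ (resp.\ of $A_{\xi}\widetilde{B}$ and $\widetilde{B}$), together with the commutativity (A1). The only difference is organizational: the paper runs an explicit induction writing $CA_1^{n_1}A_2^{n_2} = \sum_{k} E_k C A_{\xi}^k$ and deduces a kernel inclusion, whereas you package the identical mechanism as joint $(A_1,A_2)$-invariance of the unobservable and controllable subspaces --- a slightly cleaner formulation that also writes out the controllability half which the paper dismisses as ``dual.''
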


This follows from the next two lemmas:

\begin{lem}\label{lem:obsReg}
Suppose $\OpenLoop$ is an observable vessel, and suppose that $\xi$ is a regular direction for $\OpenLoop$. Then
\[
\bigcap_{n=0}^{\infty} \ker CA_{\xi}^n = \{0\}
\]
\end{lem}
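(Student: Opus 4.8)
The plan is to study the single-direction unobservable subspace $\mathcal{N} := \bigcap_{n=0}^{\infty} \ker(C A_{\xi}^n)$ and to show that the rigidity imposed by the output vessel condition (A3), together with the regularity of $\xi$, forces $\mathcal{N}$ to be invariant under each of $A_1$ and $A_2$ separately. Once this is established, $\mathcal{N}$ is a subspace of $\ker C$ that is invariant under both $A_1$ and $A_2$; hence $C A_1^{n_1} A_2^{n_2} \mathcal{N} = 0$ for all $n_1, n_2 \ge 0$, so $\mathcal{N}$ is contained in the joint unobservable subspace $\bigcap_{n_1, n_2} \ker(C A_1^{n_1} A_2^{n_2})$, which is $\{0\}$ because $\OpenLoop$ is observable.

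The heart of the argument is the following claim: for every $v \in \mathcal{N}$ one has $C A_1 v = 0$ and $C A_2 v = 0$. To prove it, write $a = C A_1 v$ and $b = C A_2 v$ in $\mathcal{E}_*$. Since $v \in \mathcal{N} \subseteq \ker C$, applying (A3) to $v$ and using $Cv = 0$ gives $\sigma_{2*} a = \sigma_{1*} b$. On the other hand, $A_{\xi} v$ again lies in $\mathcal{N} \subseteq \ker C$, so $0 = C A_{\xi} v = \xi_1 a + \xi_2 b$. Substituting the first relation into the expansion $\sigma_{*\xi} a = (\xi_1 \sigma_{1*} + \xi_2 \sigma_{2*}) a$ yields $\sigma_{*\xi} a = \sigma_{1*}(\xi_1 a + \xi_2 b) = 0$, and symmetrically $\sigma_{*\xi} b = \sigma_{2*}(\xi_1 a + \xi_2 b) = 0$. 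Since $\xi$ is a regular direction, $\sigma_{*\xi}$ is invertible (here I use the implication that $\sigma_{\xi}$ invertible forces $\sigma_{*\xi}$ invertible, coming from (A4)), so $a = b = 0$, proving the claim.

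Next I would propagate the claim to all powers. Because $A_1$ and $A_2$ commute, each commutes with $A_{\xi}$, so for $v \in \mathcal{N}$ and any $n \ge 0$ we have $C A_{\xi}^n (A_1 v) = C A_1 (A_{\xi}^n v)$. But $A_{\xi}^n v \in \mathcal{N}$ by definition of $\mathcal{N}$, and the claim applied to the vector $A_{\xi}^n v$ gives $C A_1 (A_{\xi}^n v) = 0$. Hence $A_1 v \in \mathcal{N}$, and by the same reasoning $A_2 v \in \mathcal{N}$; thus $\mathcal{N}$ is invariant under both operators, completing the chain described above.

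The main obstacle is precisely the key claim of the second paragraph: the naive inclusion goes the wrong way, since expanding $A_{\xi}^n$ by the binomial theorem shows only that the joint unobservable subspace sits inside $\mathcal{N}$, not the reverse. What makes the reverse inclusion work is that the vessel condition (A3) couples $C A_1$ and $C A_2$ on $\ker C$, and regularity lets one invert $\sigma_{*\xi}$ to separate the two; this is the point where the special structure of a vessel, rather than mere commutativity of $A_1$ and $A_2$, is essential.
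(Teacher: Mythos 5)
Your proof is correct, and it organizes the argument differently from the paper's. Both proofs run on the same engine: the vessel condition (A3) together with invertibility of $\sigma_{*\xi}$ (which regularity supplies via (A4)) is used to control $CA_1$ and $CA_2$ in terms of $C$ and $CA_{\xi}$. But the packaging differs. The paper proves a global operator identity: it solves (A3) to write $CA_1$ and $CA_2$ each in the form $E_1C + E_2CA_{\xi}$, and then an induction on $(n_1,n_2)$ shows that every $CA_1^{n_1}A_2^{n_2}$ is a left combination $\sum_{k} E_k CA_{\xi}^{k}$, whence $\bigcap_{k}\ker CA_{\xi}^{k} \subseteq \ker CA_1^{n_1}A_2^{n_2}$. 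You instead argue pointwise on the subspace $\mathcal{N} = \bigcap_{n}\ker CA_{\xi}^{n}$: on vectors killed by $C$ and by $CA_{\xi}$ the term $\gamma_{*}C$ drops out of (A3) entirely, the two relations $\sigma_{2*}a = \sigma_{1*}b$ and $\xi_1 a + \xi_2 b = 0$ force $a = b = 0$ after hitting them with $\sigma_{1*}$ and $\sigma_{2*}$, and commutativity of $A_1, A_2$ then upgrades the claim to $A_1$- and $A_2$-invariance of $\mathcal{N}$. Your version buys two small things: it avoids the inductive bookkeeping with the coefficient matrices $E_k$, and it never divides by $\xi_1$ or $\xi_2$ --- the paper's displayed formulas for $CA_1$ and $CA_2$ involve $1/\xi_1$ and $1/\xi_2$ and so, as written, implicitly assume both coordinates of the direction are nonzero, a degenerate case your argument handles uniformly. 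What the paper's version gives in exchange is slightly more explicit information: a quantitative expansion of $CA_1^{n_1}A_2^{n_2}$ in terms of the operators $CA_{\xi}^{k}$ with $k \le n_1+n_2$, of which the kernel inclusion is a corollary.
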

\begin{proof}
Since $\OpenLoop$ is observable, we have that
\[
\bigcap_{n_1=0}^{\infty} \bigcap_{n_2=0}^{\infty} \ker (CA_1^{n_1}A_2^{n_2}) = \{0\}
\]
Since $\xi$ is regular, it follows that $\sigma_{*\xi}$ is invertible. 
By the vessel condition (A3) we have that 
\[
\sigma_{2*}CA_1-\sigma_{1*}CA_2+\gamma_*C = 0
\]
Multiplying both sides of this equation by $\xi_1\xi_2$ and rearranging we get
\[
CA_1 = \frac{1}{\xi_1}\sigma_{*\xi}^{-1}(\xi_1\sigma_{1*}CA_{\xi}-\xi_1\xi_2\gamma_{*}C)
\]
and similarly
\[
CA_2 = \frac{1}{\xi_2}\sigma_{*\xi}^{-1}(\xi_2\sigma_{2*}CA_{\xi}+\xi_1\xi_2\gamma_*C)
\]
so that both $CA_1$ and $CA_2$ are of the form $E_1C+E_2CA_{\xi}$ for some matrices $E_1,E_2$. We now claim that for all $n_1\ge 0$, $n_2\ge 0$ one may write $CA_1^{n_1}A_2^{n_2} = \sum_{k=0}^{n_1+n_2} E_k C A_{\xi}^k$ for some matrices $E_0,\dots,E_{n_1+n_2}$. We prove this by induction. By symmetry and since $A_1$ and $A_2$ commute, it is enough to show that if it is true for $CA_1^{n_1}A_2^{n_2}$ then it is true for $CA_1^{n_1}A_2^{n_2+1}$.
Let $CA_2 = M_0C + M_1CA_{\xi}$.
Write
\[
CA_1^{n_1}A_2^{n_2} = \sum_{k=0}^{n_1+n_2} E_k C A_{\xi}^k
\]
and multiply this by $A_2$. Then
\[
CA_1^{n_1}A_2^{n_2+1} = \sum_{k=0}^{n_1+n_2} E_k C A_{\xi}^k A_2
\]
However, $A_2$ and $A_{\xi}$ commute, so we may write each term as:
\[
E_k C A_{\xi}^k A_2 = E_k CA_2 A_{\xi}^k = E_k (M_0C+M_1CA_{\xi}) A_{\xi}^k = 
E_kM_0CA_{\xi}^k + E_kM_1CA_{\xi}^{k+1}
\]
so the entire sum has the required form. Thus, we obtain that for all $n_1\ge 0$, $n_2\ge 0$ we have that
\[
\bigcap_{k=0}^{n_1+n_2} \ker (CA_{\xi}^k) \subseteq \ker(CA_1^{n_1}A_2^{n_2})
\]
so the result follows.\qed
\end{proof}
Dually, and using the vessel condition (A2) one has that
\begin{lem}
Suppose $\OpenLoop$ is a controllable vessel, and suppose that $\xi$ is a regular direction for $\OpenLoop$. Then
\[
\sum_{n=0}^{\infty} \opn{Im} A_{\xi}^n B_{\xi} = \mathcal{H}
\]
\end{lem}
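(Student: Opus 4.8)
The plan is to prove this lemma as the dual of the preceding one, using the vessel condition (A2) in place of (A3), together with the controllability hypothesis and the regularity of the direction $\xi$. First I would unwind the hypothesis: since $\OpenLoop$ is controllable, Definition \ref{def:COM} gives
\[
\sum_{n_1=0}^{\infty}\sum_{n_2=0}^{\infty}\opn{Im}\, A_1^{n_1}A_2^{n_2}\widetilde{B} = \mathcal{H}.
\]
Since $\xi$ is regular, $\sigma_{\xi}=\xi_1\sigma_1+\xi_2\sigma_2$ is invertible, and I recall that $B_{\xi}=\widetilde{B}\sigma_{\xi}$, so that $\opn{Im}\, B_{\xi} = \opn{Im}\,\widetilde{B}$.

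The key step is to show that each column space $\opn{Im}\, A_1^{n_1}A_2^{n_2}\widetilde{B}$ is contained in $\sum_{k=0}^{n_1+n_2}\opn{Im}\, A_{\xi}^k B_{\xi}$, which is the mirror image of the containment of kernels established for the observable case. The mechanism is the same dualized computation: starting from (A2),
\[
A_1\widetilde{B}\sigma_2 - A_2\widetilde{B}\sigma_1 + \widetilde{B}\gamma = 0,
\]
I would multiply by appropriate scalars $\xi_1,\xi_2$ and solve for $A_1\widetilde{B}$ and $A_2\widetilde{B}$ in the form $A_i\widetilde{B} = B_{\xi}N_i^{(1)} + A_{\xi}\widetilde{B}N_i^{(2)}$ for suitable matrices, using that $\sigma_{\xi}$ is invertible to absorb $\widetilde{B}\sigma_j$ terms into $B_{\xi}=\widetilde{B}\sigma_{\xi}$. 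Then an induction on $n_1+n_2$ — exactly symmetric to the one in the previous proof, with products acting on the right by $A_i$ replaced by left multiplication by $A_i$, and using that $A_1,A_2$ commute with $A_{\xi}$ — yields that every $A_1^{n_1}A_2^{n_2}\widetilde{B}$ factors through a sum of the $A_{\xi}^k B_{\xi}$. Summing over all $n_1,n_2$ and invoking controllability then forces $\sum_{n}\opn{Im}\, A_{\xi}^n B_{\xi} = \mathcal{H}$.

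The main obstacle I anticipate is bookkeeping rather than conceptual: in the observable case the identity $CA_1^{n_1}A_2^{n_2}=\sum_k E_k C A_{\xi}^k$ was a left-factorization through $C$, and the dualization requires a right-factorization through $\widetilde{B}$, so I must be careful about the order of matrix multiplication and which side the auxiliary matrices $N_i$ sit on. Concretely, the relation I want reads $A_1^{n_1}A_2^{n_2}\widetilde{B} = \sum_{k=0}^{n_1+n_2} A_{\xi}^k B_{\xi} N_k$ for some matrices $N_k$; pushing the induction from $A_1^{n_1}A_2^{n_2}\widetilde{B}$ to $A_2 A_1^{n_1}A_2^{n_2}\widetilde{B}$ uses the commutation of $A_2$ with $A_{\xi}$ to move the extra factor of $A_2$ past each $A_{\xi}^k$ and then applies the base relation $A_2\widetilde{B} = B_{\xi}N^{(1)} + A_{\xi}\widetilde{B}N^{(2)}$. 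The only subtlety is verifying the base relations come out with the correct signs from (A2); once those are fixed the induction is formally identical to the one already carried out, and I would simply remark that it proceeds \emph{mutatis mutandis}.
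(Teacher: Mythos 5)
Your proposal is correct and is exactly the argument the paper intends: the paper states this lemma with only the remark ``dually, and using the vessel condition (A2),'' and your dualization --- deriving $A_i\widetilde{B} = B_{\xi}N_i^{(1)} + A_{\xi}\widetilde{B}N_i^{(2)}$ from (A2) via invertibility of $\sigma_{\xi}$, running the right-sided induction to get $A_1^{n_1}A_2^{n_2}\widetilde{B} = \sum_{k=0}^{n_1+n_2}A_{\xi}^k B_{\xi}N_k$, and summing images using controllability --- is precisely the mirror of the paper's proof of the observability lemma. The bookkeeping you flag (auxiliary matrices on the right, commuting $A_2$ past $A_{\xi}^k$ before applying the base relation) indeed works out, so there is no gap.
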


\subsection{State feedback for operator vessels}

Following \cite[Example 1.20]{BV1},
we now introduce state feedback for operator vessels.
Because of the centrality of this construction to this paper,
we verify the following in details, even though it is a bit tedious.

\begin{prop}
Let 
\[
\mathcal{B} = (A_1,A_2,\widetilde{B},C,D,\widetilde{D},\sigma_1,\sigma_2,\gamma,\sigma_{1*},\sigma_{2*},\gamma_*;\mathcal{H},\mathcal{E},\widetilde{\mathcal{E}},\mathcal{E}_*,\widetilde{\mathcal{E}_*})
\]
be a vessel, and let $F:\mathcal{H}\to \mathcal{E}$ be a linear operator.
Suppose that $F$ satisfies the following two conditions
\begin{equation}\label{eqn:feedback1}
\sigma_2 F A_1 - \sigma_1 F A_2 + \gamma F =0,
\end{equation}
\begin{equation}\label{eqn:feedback2}
\sigma_1 F \widetilde{B}\sigma_2 - \sigma_2 F \widetilde{B} \sigma_1 = 0
\end{equation}
then the collection $\ClosedLoop =$
\[
(A_1+\widetilde{B}\sigma_1F,A_2+\widetilde{B}\sigma_2F,\widetilde{B},C+DF,D,\widetilde{D},\sigma_1,\sigma_2,\gamma,\sigma_{1*},\sigma_{2*},\gamma_*;\mathcal{H},\mathcal{E},\widetilde{\mathcal{E}},\mathcal{E}_*,\widetilde{\mathcal{E}_*})
\]
is an operator vessel. 
We say that $F$ is an admissible state feedback operator, 
and that $\ClosedLoop$ is the closed loop system formed by applying the state feedback operator $F$.
\end{prop}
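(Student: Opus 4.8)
The plan is to verify directly that the modified collection $\ClosedLoop$ satisfies the four vessel conditions (A1)--(A4) of Definition \ref{dfn:vessel}. Since the operators $\sigma_1,\sigma_2,\gamma,\sigma_{1*},\sigma_{2*},\gamma_*,D,\widetilde{D},\widetilde{B}$ and all five spaces are left unchanged, the dimension equalities and the invertibility of $D$ and $\widetilde{D}$ carry over for free; only the algebraic identities need checking. Throughout I would abbreviate the modified operators by $A_1' = A_1+\widetilde{B}\sigma_1 F$, $A_2' = A_2+\widetilde{B}\sigma_2 F$ and $C' = C+DF$. The whole argument is a bookkeeping exercise whose only inputs are the four conditions holding for $\mathcal{B}$ together with the two feedback conditions (\ref{eqn:feedback1}) and (\ref{eqn:feedback2}).

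First I would check (A1), that $A_1'$ and $A_2'$ commute. Expanding $A_1'A_2'-A_2'A_1'$ produces four groups: the commutator $A_1A_2-A_2A_1$, which vanishes by (A1) for $\mathcal{B}$; the two mixed groups $A_1\widetilde{B}\sigma_2 F - A_2\widetilde{B}\sigma_1 F$ and $\widetilde{B}\sigma_1 F A_2 - \widetilde{B}\sigma_2 F A_1$, which combine to $-\widetilde{B}\gamma F+\widetilde{B}\gamma F=0$ after invoking (A2) for $\mathcal{B}$ and the feedback condition (\ref{eqn:feedback1}); and the term quadratic in $F$, namely $\widetilde{B}(\sigma_1 F\widetilde{B}\sigma_2-\sigma_2 F\widetilde{B}\sigma_1)F$, which vanishes by (\ref{eqn:feedback2}). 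Condition (A2) is shorter: expanding $A_1'\widetilde{B}\sigma_2-A_2'\widetilde{B}\sigma_1+\widetilde{B}\gamma$ gives the original expression (zero by (A2) for $\mathcal{B}$) plus $\widetilde{B}(\sigma_1 F\widetilde{B}\sigma_2-\sigma_2 F\widetilde{B}\sigma_1)$, again zero by (\ref{eqn:feedback2}). For (A4), the equalities $\sigma_{1*}D=\widetilde{D}\sigma_1$ and $\sigma_{2*}D=\widetilde{D}\sigma_2$ hold verbatim since $D$ and the $\sigma$'s are unchanged; the remaining relation, now reading $\gamma_* D=\widetilde{D}\gamma+\sigma_{1*}C'\widetilde{B}\sigma_2-\sigma_{2*}C'\widetilde{B}\sigma_1$, reduces after substituting $C'=C+DF$ to the original (A4) relation plus $\sigma_{1*}DF\widetilde{B}\sigma_2-\sigma_{2*}DF\widetilde{B}\sigma_1$, which rewrites via $\sigma_{i*}D=\widetilde{D}\sigma_i$ as $\widetilde{D}(\sigma_1 F\widetilde{B}\sigma_2-\sigma_2 F\widetilde{B}\sigma_1)$ and vanishes by (\ref{eqn:feedback2}).

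The main obstacle will be condition (A3), since it is the only place where $C$ and both $A_i$ are modified simultaneously, so its expansion carries the largest number of cross terms. The key device is to use (A4) to replace every occurrence of $\sigma_{i*}D$ by $\widetilde{D}\sigma_i$, and crucially to expand $\gamma_* D$ using its own (A4) expression $\widetilde{D}\gamma+\sigma_{1*}C\widetilde{B}\sigma_2-\sigma_{2*}C\widetilde{B}\sigma_1$ rather than leaving it intact. After this substitution the terms organize into four blocks that each cancel: the original block $\sigma_{2*}CA_1-\sigma_{1*}CA_2+\gamma_*C$ vanishes by (A3) for $\mathcal{B}$; the block of $C\widetilde{B}$ terms cancels precisely against the $C\widetilde{B}$ terms liberated from the expanded $\gamma_* D$; the block $\widetilde{D}(\sigma_2 FA_1-\sigma_1 FA_2+\gamma F)$ vanishes by (\ref{eqn:feedback1}); and the last quadratic block $\widetilde{D}(\sigma_2 F\widetilde{B}\sigma_1-\sigma_1 F\widetilde{B}\sigma_2)F$ vanishes by (\ref{eqn:feedback2}).

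The one genuinely non-mechanical insight is the recognition that the $\gamma_* D$ relation in (A4) must be expanded so that its built-in $C\widetilde{B}$ terms are available to cancel the newly generated $\sigma_{i*}C\widetilde{B}\sigma_j F$ terms; once this is seen, every remaining cancellation is dictated by exactly one of the two feedback conditions. This also explains, a posteriori, why the two feedback conditions take the form they do: (\ref{eqn:feedback1}) is precisely what is needed to preserve the ``$A$-type'' conditions (A1)--(A3) under the modification $A_i\mapsto A_i+\widetilde{B}\sigma_i F$, while (\ref{eqn:feedback2}) is precisely what is needed to absorb the $F$-quadratic corrections appearing in (A1), (A3) and (A4). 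I therefore expect no obstruction beyond organizing the (A3) expansion cleanly, and the proof to be complete once all four conditions are checked.
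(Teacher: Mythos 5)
Your proposal is correct and follows essentially the same route as the paper: a direct term-by-term verification of (A1)--(A4) using the vessel conditions of $\mathcal{B}$ together with (\ref{eqn:feedback1}) and (\ref{eqn:feedback2}), including the key step in (A3) of invoking the $\gamma_* D$ relation from (A4) so that the $\sigma_{i*}C\widetilde{B}\sigma_j F$ cross terms cancel (the paper substitutes $\widetilde{D}\gamma = \gamma_*D-\sigma_{1*}C\widetilde{B}\sigma_2+\sigma_{2*}C\widetilde{B}\sigma_1$, which is the same identity read in the opposite direction). All the cancellations you identify are the ones the paper uses, so the argument is complete up to writing out the expansions.
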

\begin{proof}
Suppose $F$ satisfies equations (\ref{eqn:feedback1}) and (\ref{eqn:feedback2}). We have to verify the four vessel conditions.\\
Condition (A1):
\begin{equation}
\left.\begin{array}{l}
(A_1+\widetilde{B}\sigma_1F)(A_2+\widetilde{B}\sigma_2F) - (A_2+\widetilde{B}\sigma_2F)(A_1+\widetilde{B}\sigma_1F) =
\\
= (A_1A_2+A_1\widetilde{B}\sigma_2F+\widetilde{B}\sigma_1FA_2+\widetilde{B}\sigma_1F\widetilde{B}\sigma_2F)\\
- (A_2A_1+A_2\widetilde{B}\sigma_1F+\widetilde{B}\sigma_2F\widetilde{B}\sigma_1F+\widetilde{B}\sigma_2FA_1) =\\
=
(A_1A_2-A_2A_1) + (\widetilde{B}\sigma_1F\widetilde{B}\sigma_2F-\widetilde{B}\sigma_2F\widetilde{B}\sigma_1F) +\\
+(A_1\widetilde{B}\sigma_2F+\widetilde{B}\sigma_1FA_2-A_2\widetilde{B}\sigma_1F-\widetilde{B}\sigma_2FA_1)
\end{array}\right.
\end{equation}
The first term vanishes because of vessel condition (A1) satisfied by $\mathcal{B}$. The second term vanishes because of (\ref{eqn:feedback2}). For the third term:
\begin{eqnarray}
A_1\widetilde{B}\sigma_2F+\widetilde{B}\sigma_1FA_2-A_2\widetilde{B}\sigma_1F-\widetilde{B}\sigma_2FA_1 =\nonumber\\ A_1\widetilde{B}\sigma_2F-A_2\widetilde{B}\sigma_1F+\widetilde{B}(\sigma_1FA_2-\sigma_2FA_1)\nonumber
\end{eqnarray}
From (\ref{eqn:feedback1}), we have $\sigma_1FA_2-\sigma_2FA_1 = \gamma F$, so the last term is equal to
\begin{equation}
= A_1\widetilde{B}\sigma_2F-A_2\widetilde{B}\sigma_1F+\widetilde{B}\gamma F = (A_1\widetilde{B}\sigma_2-A_2\widetilde{B}\sigma_1+\widetilde{B}\gamma)F = 0
\end{equation}
where the last equality follows from the vessel condition (A2) for the vessel $\mathcal{B}$. This establishes (A1).\\
Condition (A2):
\begin{equation}
\left.\begin{array}{l}
(A_2+\widetilde{B}\sigma_2F)\widetilde{B}\sigma_1-(A_1+\widetilde{B}\sigma_1F)\widetilde{B}\sigma_2-\widetilde{B}\gamma =\\
= (A_2\widetilde{B}\sigma_1-A_1\widetilde{B}\sigma_2-\widetilde{B}\gamma) + (\widetilde{B}\sigma_2F\widetilde{B}\sigma_1-\widetilde{B}\sigma_1F\widetilde{B}\sigma_2) = 0
\end{array}\right.
\end{equation}
where the first term vanishes because of the vessel condition (A2) of $\mathcal{B}$, and the vanishing of the second term follows from (\ref{eqn:feedback2}).\\
Condition (A3):
\begin{equation}\label{VerifyA3forClsoedLoop}
\left.\begin{array}{l}
\sigma_{2*}(C+DF)(A_1+\widetilde{B}\sigma_1F)
-\sigma_{1*}(C+DF)(A_2+\widetilde{B}\sigma_2F) + \gamma_*(C+DF)=\\
=\sigma_{2*}(CA_1+C\widetilde{B}\sigma_1F+DFA_1+DF\widetilde{B}\sigma_1F)\\
-\sigma_{1*}(CA_2+C\widetilde{B}\sigma_2F+DFA_2+DF\widetilde{B}\sigma_2F) + \gamma_*C+\gamma_*DF =\\
(\sigma_{2*}CA_1-\sigma_{1*}CA_2+\gamma_*C) + (\sigma_{2*}C\widetilde{B}\sigma_1F-\sigma_{1*}C\widetilde{B}\sigma_2F+\gamma_*DF) +\\
+ (\sigma_{2*}DFA_1-\sigma_{1*}DFA_2+\sigma_{2*}DF\widetilde{B}\sigma_1F-\sigma_{1*}DF\widetilde{B}\sigma_2F)
\end{array}\right.
\end{equation}
The first term vanishes because of condition (A3) for the vessel $\mathcal{B}$. For the third term, using the equation $\sigma_{i*}D = \widetilde{D}\sigma_i$ (condition (A4) for $\mathcal{B}$), we have
\begin{equation}
\left.\begin{array}{l}
\sigma_{2*}DFA_1-\sigma_{1*}DFA_2+\sigma_{2*}DF\widetilde{B}\sigma_1F-\sigma_{1*}DF\widetilde{B}\sigma_2F =\\
= \widetilde{D}\sigma_2FA_1-\widetilde{D}\sigma_1FA_2 +\widetilde{D}\sigma_2F\widetilde{B}\sigma_1F -\widetilde{D}\sigma_1F\widetilde{B}\sigma_2F = \\
=
\widetilde{D} ((\sigma_2FA_1-\sigma_1FA_2) + (\sigma_2F\widetilde{B}\sigma_1-\sigma_1F\widetilde{B}\sigma_2)F) = -\widetilde{D}\gamma F
\end{array}\right.
\end{equation}
where the last equation follows from (\ref{eqn:feedback1}) and (\ref{eqn:feedback2}). Hence, equation (\ref{VerifyA3forClsoedLoop}) becomes
\begin{equation}\label{VerifyA3forClsoedLoopCont}
(\sigma_{2*}C\widetilde{B}\sigma_1F-\sigma_{1*}C\widetilde{B}\sigma_2F+\gamma_*DF) -\widetilde{D}\gamma F
\end{equation}
using the relation $\widetilde{D}\gamma = \gamma_*D-\sigma_{1*}C\widetilde{B}\sigma_2+\sigma_{2*}C\widetilde{B}\sigma_1$ (condition (A4) for the vessel $\mathcal{B}$), the equation (\ref{VerifyA3forClsoedLoopCont}) becomes
\begin{equation}
(\sigma_{2*}C\widetilde{B}\sigma_1F-\sigma_{1*}C\widetilde{B}\sigma_2F+\gamma_*DF) -(\gamma_*D-\sigma_{1*}C\widetilde{B}\sigma_2+\sigma_{2*}C\widetilde{B}\sigma_1)F = 0
\end{equation}
This establishes (A3).\\
Condition (A4):\\
The equations $\sigma_{i*}D = \widetilde{D}\sigma_i$ are satisfied because of the vessel condition (A4) of $\mathcal{B}$. We now verify the last equation of (A4):
\begin{equation}
\left.\begin{array}{l}
\gamma_*D-\widetilde{D}\gamma-\sigma_{1*}(C+DF)\widetilde{B}\sigma_2+\sigma_{2*}(C+DF)\widetilde{B}\sigma_1 =\\
=(\gamma_*D-\widetilde{D}\gamma-\sigma_{1*}C\widetilde{B}\sigma_2+\sigma_{2*}C\widetilde{B}\sigma_1) + (\sigma_{2*}DF\widetilde{B}\sigma_1-\sigma_{1*}DF\widetilde{B}\sigma_2).
\end{array}\right.
\end{equation}
The vanishing of the first term follows from condition (A4) of $\mathcal{B}$. For the second term, using the relation $\sigma_{i*}D = \widetilde{D}\sigma_i$ we obtain
\begin{equation}
\sigma_{2*}DF\widetilde{B}\sigma_1-\sigma_{1*}DF\widetilde{B}\sigma_2 = \widetilde{D}\sigma_2F\widetilde{B}\sigma_1-\widetilde{D}\sigma_1F\widetilde{B}\sigma_2 = 
\widetilde{D}(\sigma_2F\widetilde{B}\sigma_1-\sigma_1F\widetilde{B}\sigma_2) = 0
\end{equation}
where the last equality follows from (\ref{eqn:feedback2}).\\
Hence, $\ClosedLoop$ satisfies (A1)-(A4), so it is indeed a vessel.
\end{proof}

We may now state the question this paper answers: Let $\mathcal{B}$ be a minimal vessel. Which transfer functions may be obtained as transfer functions of closed loop systems obtained from $\mathcal{B}$ by state feedback? The next section will be dedicated to answer this question.

We finish this section with the following construction: state space similarity for vessels. We omit the proof which is a trivial verification, similar to the above, but easier.

\begin{prop}\label{prop:StateSpaceSim}
Let $\mathcal{B} = (A_1,A_2,\widetilde{B},C,D,\widetilde{D},\sigma_1,\sigma_2,\gamma,\sigma_{1*},\sigma_{2*},\gamma_*;\mathcal{H},\mathcal{E},\widetilde{\mathcal{E}},\mathcal{E}_*,\widetilde{\mathcal{E}_*})$ be a vessel. Given an isomorphism $N:\mathcal{H}'\rightarrow \mathcal{H}$, 
the collection $N^{-1}\mathcal{B}N$ given by
\[ 
(N^{-1}A_1N,N^{-1}A_2N,N^{-1}\tilde{B},CN,D,\tilde{D},\sigma_1,\sigma_2,\gamma,\sigma_{1*},\sigma_{2*},\gamma_*;\mathcal{H}',\mathcal{E},\tilde{\mathcal{E}},\mathcal{E}_*,\tilde{\mathcal{E}_*})
\]
is an operator vessel, and $S_{N^{-1}\mathcal{B}N} = S_{\mathcal{B}}$.
\end{prop}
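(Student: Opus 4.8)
The plan is to prove the two assertions separately: first that the conjugated collection $N^{-1}\mathcal{B}N$ satisfies the four vessel conditions (A1)--(A4) (together with invertibility of $D,\widetilde{D}$ and the dimension equalities), and second that its transfer function agrees with that of $\mathcal{B}$. The guiding observation is that conjugation by an isomorphism touches only the state-space data, and every verification will reduce to the corresponding condition for $\mathcal{B}$ after extracting a factor of $N^{-1}$ on the left and/or $N$ on the right, using $NN^{-1} = \opn{id}_{\mathcal{H}}$ and $N^{-1}N = \opn{id}_{\mathcal{H}'}$.

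First I would record the new state-space operators, $A_i' = N^{-1}A_iN$, $\widetilde{B}' = N^{-1}\widetilde{B}$, and $C' = CN$, while $D$, $\widetilde{D}$, the $\sigma$'s and the $\gamma$'s are unchanged; invertibility of $D,\widetilde{D}$ and the dimension equalities are therefore inherited at once. For (A1) one has $A_1'A_2' = N^{-1}A_1A_2N$ and $A_2'A_1' = N^{-1}A_2A_1N$, equal by (A1) for $\mathcal{B}$. For (A2), since $A_i'\widetilde{B}' = N^{-1}A_i\widetilde{B}$ and $\widetilde{B}'\gamma = N^{-1}\widetilde{B}\gamma$, the left-hand side factors as $N^{-1}(A_1\widetilde{B}\sigma_2 - A_2\widetilde{B}\sigma_1 + \widetilde{B}\gamma)$, which vanishes by (A2) for $\mathcal{B}$. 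For (A3), $C'A_i' = CA_iN$, so the left-hand side factors as $(\sigma_{2*}CA_1 - \sigma_{1*}CA_2 + \gamma_*C)N = 0$. Finally, (A4) involves only the unchanged data together with $C'\widetilde{B}' = CNN^{-1}\widetilde{B} = C\widetilde{B}$, so its equations are literally those of $\mathcal{B}$.

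For the transfer function, I would first note that conjugation preserves the joint spectrum: $A_i'v = \lambda_i v$ for $v \ne 0$ is equivalent to $A_i(Nv) = \lambda_i(Nv)$ with $Nv \ne 0$, so $\opn{Spec}(A_1',A_2') = \opn{Spec}(A_1,A_2)$, and the defining formula for $S_{N^{-1}\mathcal{B}N}$ is available on the same set and for the same auxiliary directions $\xi_1,\xi_2$. The crux is the identity
\[
\xi_1(\lambda_1 I - A_1') + \xi_2(\lambda_2 I - A_2') = N^{-1}\bigl(\xi_1(\lambda_1 I - A_1) + \xi_2(\lambda_2 I - A_2)\bigr)N,
\]
which follows by inserting $I = N^{-1}IN$. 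Inverting it and substituting, the middle block of the transfer function collapses to
\[
C'\bigl(\,\cdots\,\bigr)^{-1}\widetilde{B}' = CN\,N^{-1}\bigl(\,\cdots\,\bigr)^{-1}N\,N^{-1}\widetilde{B} = C\bigl(\,\cdots\,\bigr)^{-1}\widetilde{B},
\]
where $(\cdots)$ abbreviates $\xi_1(\lambda_1 I - A_1) + \xi_2(\lambda_2 I - A_2)$; since $D$ and $\xi_1\sigma_1 + \xi_2\sigma_2$ are unchanged, this yields $S_{N^{-1}\mathcal{B}N} = S_{\mathcal{B}}$.

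I do not expect a genuine obstacle: the entire argument is bookkeeping of the cancellations $NN^{-1} = \opn{id}$. The only point needing minor care is to confirm that the transfer function of $N^{-1}\mathcal{B}N$ is well defined on the same domain as that of $\mathcal{B}$, which is exactly the joint-spectrum computation above.
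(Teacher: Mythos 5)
Your proposal is correct and coincides with what the paper intends: the paper omits this proof, describing it as ``a trivial verification, similar to the above, but easier,'' and your argument---checking (A1)--(A4) by peeling off the $N^{-1}$ and $N$ factors, and establishing $S_{N^{-1}\mathcal{B}N}=S_{\mathcal{B}}$ via the conjugation identity $\xi_1(\lambda_1 I - N^{-1}A_1N)+\xi_2(\lambda_2 I - N^{-1}A_2N)=N^{-1}\bigl(\xi_1(\lambda_1 I - A_1)+\xi_2(\lambda_2 I - A_2)\bigr)N$---is exactly that verification, in the same style as the paper's proof of the preceding state-feedback proposition. Nothing is missing; the joint-spectrum invariance you note also correctly ensures the two transfer functions are defined on the same set.
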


\section{The pole placement theorem}

\subsection{The controller vessel}

The following is the main tool used in the proof of the main result of this paper.

\begin{prop}\label{prop:Controller}
Consider an operator vessel
\[
\mathcal{B} = (A_1,A_2,\widetilde{B},C,D,\widetilde{D},\sigma_1,\sigma_2,\gamma,\sigma_{1*},\sigma_{2*},\gamma_*;\mathcal{H},\mathcal{E},\widetilde{\mathcal{E}},\mathcal{E}_*,\widetilde{\mathcal{E}_*})
\]
and a linear operator $F:\mathcal{H}\rightarrow \mathcal{E}$. 
Then $F$ is an admissible state feedback operator if and only if the collection
\begin{equation}
\Controller = (A_1,A_2,\widetilde{B},-F,I,I,\sigma_1,\sigma_2,\gamma,\sigma_1,\sigma_2,\gamma;\mathcal{H},\mathcal{E},\widetilde{\mathcal{E}},\mathcal{E},\widetilde{\mathcal{E}})
\end{equation}
is an operator vessel. 
The vessel $\Controller$ is called the controller vessel of the state feedback operator $F$.
\end{prop}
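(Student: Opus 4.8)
The statement asserts an equivalence: $F$ is an admissible state feedback operator for $\mathcal{B}$ if and only if $\Controller$ is an operator vessel. Since ``admissible'' means precisely that $F$ satisfies the two feedback equations \eqref{eqn:feedback1} and \eqref{eqn:feedback2}, the plan is to unwind what it means for $\Controller$ to be a vessel by writing out its four vessel conditions (A1)--(A4), and to check that these conditions are collectively equivalent to \eqref{eqn:feedback1} together with \eqref{eqn:feedback2}. Concretely, the controller data substitutes $A_1,A_2,\widetilde{B},\sigma_1,\sigma_2,\gamma$ unchanged from $\mathcal{B}$, takes $C=-F$, $D=\widetilde{D}=I$, and sets the output colligation data $\sigma_{1*},\sigma_{2*},\gamma_*$ equal to the input data $\sigma_1,\sigma_2,\gamma$ (with output spaces $\mathcal{E}_*=\mathcal{E}$, $\widetilde{\mathcal{E}_*}=\widetilde{\mathcal{E}}$).

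\emph{The plan step by step.}
First I would note that conditions (A1) and (A2) for $\Controller$ are identical to conditions (A1) and (A2) for $\mathcal{B}$, since these only involve $A_1,A_2,\widetilde{B},\sigma_1,\sigma_2,\gamma$, all inherited unchanged; hence they hold automatically and impose no condition on $F$. The content therefore lives in (A3) and (A4). For (A3), substituting $C=-F$ and $\sigma_{i*}=\sigma_i$, $\gamma_*=\gamma$ turns
\[
\sigma_{2*}CA_1-\sigma_{1*}CA_2+\gamma_*C=0
\]
into $-(\sigma_2 FA_1-\sigma_1 FA_2+\gamma F)=0$, which is exactly \eqref{eqn:feedback1}. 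For (A4), the equations $\sigma_{i*}D=\widetilde{D}\sigma_i$ read $\sigma_i=\sigma_i$ and are vacuous (as $D=\widetilde{D}=I$ and the $*$-data equal the input data). The last equation of (A4),
\[
\gamma_* D=\widetilde{D}\gamma+\sigma_{1*}C\widetilde{B}\sigma_2-\sigma_{2*}C\widetilde{B}\sigma_1,
\]
becomes $\gamma=\gamma+\sigma_1(-F)\widetilde{B}\sigma_2-\sigma_2(-F)\widetilde{B}\sigma_1$, i.e. $\sigma_2 F\widetilde{B}\sigma_1-\sigma_1 F\widetilde{B}\sigma_2=0$, which is precisely \eqref{eqn:feedback2}.

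\emph{Main obstacle and cleanup.}
There is no deep obstacle here; the proposition is essentially a bookkeeping identity once the substitution is made carefully, so the real work is just tracking signs and which colligation operators get identified. The one point deserving attention is the implicit well-posedness requirement that $\Controller$ be a legitimate vessel in the sense of Definition \ref{dfn:vessel}: one must confirm the dimension matching ($\dim\mathcal{E}=\dim\widetilde{\mathcal{E}}$ is inherited, and since $\mathcal{E}_*=\mathcal{E}$, $\widetilde{\mathcal{E}_*}=\widetilde{\mathcal{E}}$ the output matching holds too) and the invertibility of $D$ and $\widetilde{D}$, which is automatic since both equal $I$. Having verified that (A1), (A2) are free, (A3) $\Leftrightarrow$ \eqref{eqn:feedback1}, and the last line of (A4) $\Leftrightarrow$ \eqref{eqn:feedback2}, with the remaining equations of (A4) vacuous, I conclude that $\Controller$ satisfies all four vessel conditions if and only if both \eqref{eqn:feedback1} and \eqref{eqn:feedback2} hold, i.e. if and only if $F$ is admissible. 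This yields the biconditional.
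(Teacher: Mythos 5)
Your proof is correct and follows the same direct-substitution strategy as the paper: write out the four vessel conditions for $\Controller$ and match them against (\ref{eqn:feedback1}) and (\ref{eqn:feedback2}). In fact, your bookkeeping is more accurate than the paper's own proof, which asserts that conditions (A1) and (A2) of $\Controller$ ``are exactly'' the two feedback equations and leaves (A3)--(A4) to the reader; this has the correspondence backwards. Since $F$ enters the controller data only through the $C$-slot (as $-F$), while (A1)--(A2) involve only $A_1,A_2,\widetilde{B},\sigma_1,\sigma_2,\gamma$, which are inherited unchanged from $\mathcal{B}$, those two conditions hold automatically and impose no constraint on $F$. The constraints on $F$ arise precisely where you locate them: (A3), under the substitution $C=-F$, $\sigma_{i*}=\sigma_i$, $\gamma_*=\gamma$, becomes (\ref{eqn:feedback1}), and the last equation of (A4) becomes (\ref{eqn:feedback2}), the first two equations of (A4) being vacuous since $D=\widetilde{D}=I$. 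This matters for the logic of the biconditional: with your matching the equivalence is immediate, because the conditions that do not involve $F$ hold unconditionally, whereas the paper's phrasing would still require verifying (A3)--(A4) separately for every admissible $F$. Your attention to the remaining requirements of Definition \ref{dfn:vessel} (the dimension equalities and the invertibility of $D$ and $\widetilde{D}$) is also a point the paper passes over silently.
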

\begin{proof}
Conditions (A1) and (A2) are exactly the two conditions of equations (\ref{eqn:feedback1}) and (\ref{eqn:feedback2}). Assuming $F$ is an admissible state feedback operator, we leave
the (easy) verification of the vessel conditions (A3)-(A4) to the reader.
\end{proof}

The next proposition explains the importance of the controller vessel for the pole placement problem.

\begin{prop}\label{prop:Factorization}
Let $\mathcal{B}$ be a vessel, and let $F$ be an admissible state feedback. 
Then the transfer function $S_{\mathcal{B}}$ associated to $\mathcal{B}$ factors as follows:
\[\xymatrixcolsep{4pc}
\xymatrix{
E_{\opn{in}} \ar[r]^{S_{\ClosedLoop}} & E_{\opn{out}}\\
E_{\opn{in}} \ar[u]^{S_{\Controller}}\ar[ru]^{S_{\mathcal{B}}}
}
\]
\end{prop}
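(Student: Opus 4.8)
\emph{Proof plan.} The plan is to reduce the asserted factorization of bundle maps to a single rational operator identity, check it fiberwise at a generic point of $\mathbf{C}$, and then extend by meromorphic continuation. First I would fix one regular direction $\xi=(\xi_1,\xi_2)$ for $\mathcal{B}$; since $\ClosedLoop$ and $\Controller$ carry the same operators $\sigma_1,\sigma_2,\gamma$ as $\mathcal{B}$, the same direction is regular for all three, and by the $\xi$-independence of the transfer function I am free to compute with it. Writing $B_\xi=\widetilde{B}(\xi_1\sigma_1+\xi_2\sigma_2)$ and $M=(\xi_1\lambda_1+\xi_2\lambda_2)I-(\xi_1A_1+\xi_2A_2)$ at a point $(\lambda_1,\lambda_2)\in\mathbf{C}$ lying outside the relevant joint spectra, the three transfer functions are the restrictions to $E_{\opn{in}}(\lambda_1,\lambda_2)$ of the ambient operators
\[
S_{\mathcal{B}} = D + C M^{-1} B_\xi, \qquad S_{\Controller} = I - F M^{-1} B_\xi,
\]
and
\[
S_{\ClosedLoop} = D + (C+DF)(M - B_\xi F)^{-1} B_\xi,
\]
where for the last I have used that $\xi_1(\lambda_1 I-A_1-\widetilde{B}\sigma_1 F)+\xi_2(\lambda_2 I-A_2-\widetilde{B}\sigma_2 F)=M-B_\xi F$. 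At this stage I must record the bundle bookkeeping: $\ClosedLoop$ has the same starred and unstarred $\sigma$'s as $\mathcal{B}$, so its input and output bundles are again $E_{\opn{in}}$ and $E_{\opn{out}}$, whereas $\Controller$ has $\sigma_{1*}=\sigma_1$, $\sigma_{2*}=\sigma_2$, $\gamma_*=\gamma$, so its output bundle is $E_{\opn{in}}$ itself. This is what makes $S_{\ClosedLoop}\circ S_{\Controller}$ a well-defined bundle map $E_{\opn{in}}\to E_{\opn{out}}$.

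Setting $P=M^{-1}B_\xi$ and $Q=(M-B_\xi F)^{-1}B_\xi$, the heart of the argument is a resolvent identity. From $(M-B_\xi F)Q=B_\xi$ I would rearrange to $MQ=B_\xi(I+FQ)$, hence
\[
Q = P(I+FQ) = P + PFQ.
\]
This one relation delivers everything. It gives at once $(I-FP)(I+FQ)=I+F(Q-P-PFQ)=I$, so that $S_{\Controller}=I-FP$ is invertible on $\mathcal{E}$ with inverse $I+FQ$ (these being operators on the finite-dimensional space $\mathcal{E}$, the one-sided inverse is two-sided). Expanding and using $P+PFQ=Q$,
\[
(D+CP)(I+FQ) = D + DFQ + C(P+PFQ) = D + (C+DF)Q,
\]
which identifies $S_{\mathcal{B}}(I+FQ)=S_{\ClosedLoop}$ as an identity of ambient operators $\mathcal{E}\to\mathcal{E}_*$. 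Composing with $(I-FP)$ on the right and using $(I+FQ)(I-FP)=I$ yields the ambient identity $S_{\ClosedLoop}(I-FP)=S_{\mathcal{B}}$.

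Finally I would restrict to the input fibers. For $v\in E_{\opn{in}}(\lambda_1,\lambda_2)$ the vector $S_{\Controller}(\lambda_1,\lambda_2)v=(I-FP)v$ lies in the output fiber $E_{\opn{in}}(\lambda_1,\lambda_2)$ of the controller, which is exactly the domain of $S_{\ClosedLoop}$; since $S_{\ClosedLoop}$ is the restriction of the ambient operator $D+(C+DF)Q$ to that fiber, the composition of the two bundle maps equals the ambient product applied to $v$, and by the displayed identity this is $(D+CP)v=S_{\mathcal{B}}(\lambda_1,\lambda_2)v$. This proves the equality on the dense open subset of $\mathbf{C}$ where $M$ and $M-B_\xi F$ are invertible; both sides being meromorphic bundle maps over $X$ that agree there, the identity propagates to all of $X$.

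The computation is short, so I expect no serious analytic difficulty. The points demanding care are organizational: verifying that the controller's output bundle genuinely is $E_{\opn{in}}$ so that the composition type-checks, and justifying that checking the identity off the joint spectra suffices, where the meromorphic continuation (together with Assumption (\ref{fact:smooth}) locating the singularities at smooth points) is invoked.
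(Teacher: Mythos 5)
Your proposal is correct and follows essentially the same route as the paper's proof: fix a direction, reduce the factorization to an identity of rational matrix functions relating the open-loop resolvent $M^{-1}$ and the closed-loop resolvent $(M-B_\xi F)^{-1}$, and extend by meromorphic continuation. The only notable difference is that where the paper cites \cite[Theorem 1.16]{BV1} for the formula $S_{\Controller}^{-1}=I+F(M-B_\xi F)^{-1}B_\xi$, you derive it yourself from the resolvent identity $Q=P+PFQ$ (which also supplies the invertibility of $S_{\Controller}$, taken for granted in the paper), and you spell out the bundle bookkeeping that makes the composition well defined.
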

\begin{proof}
Let $\mathbf{C}$ be the plane curve associated to $\OpenLoop$, and let $\lambda = (\lambda_1,\lambda_2) \in \mathbf{C}$. 
Suppose further that $\lambda$ is not a pole of either of the three transfer functions above. 
Let us set $S(\lambda) = S_{\OpenLoop}(\lambda)$, $T(\lambda) = S_{\ClosedLoop}(\lambda)$,
and $R(\lambda) = S_{\Controller}(\lambda)$. 
By continuity, it is enough to show that $T(\lambda) = S(\lambda)R^{-1}(\lambda)$.

We have that
\[
S(\lambda) = D+C(\xi_1(\lambda_1I-A_1)+\xi_2(\lambda_2I-A_2))^{-1}\tilde{B}(\xi_1\sigma_1+\xi_2\sigma_2)
\]
and
\[
T(\lambda) = D+(C+DF)(\xi_1(\lambda_1I-A_1-\tilde{B}\sigma_1F)+\xi_2(\lambda_2I-A_2-\tilde{B}\sigma_2F))^{-1}\tilde{B}(\xi_1\sigma_1+\xi_2\sigma_2)
\]
By \cite[Theorem 1.16]{BV1}, we have that:
\[
R^{-1}(\lambda) = I+F(\xi_1(\lambda_1I-A_1-\tilde{B}\sigma_1F)+\xi_2(\lambda_2I-A_2-\tilde{B}\sigma_2F))^{-1}\tilde{B}(\xi_1\sigma_1+\xi_2\sigma_2)
\]
where we have chosen a direction $(\xi_1,\xi_2)$ so that all of these will be well defined. This is possible because for every $\lambda$ (which is not a pole) there are only finitely many choices of directions $(\xi_1,\xi_2)$ in which the the above expressions are not well defined.
To shorten notation, let us set $V=\tilde{B}(\xi_1\sigma_1+\xi_2\sigma_2)$, 
$N = (\xi_1(\lambda_1I-A_1)+\xi_2(\lambda_2I-A_2))$, and $M = (\xi_1(\lambda_1I-A_1-\tilde{B}\sigma_1F)+\xi_2(\lambda_2I-A_2-\tilde{B}\sigma_2F))$.
Under these notations, we have
$S=D+CN^{-1}V$, $T = D+(C+DF)M^{-1}V$ and $R^{-1} = I+FM^{-1}V$.
Hence,
\[
SR^{-1} = (D+CN^{-1}V)(I+FM^{-1}V) = D + DFM^{-1}V+CN^{-1}V+CN^{-1}VFM^{-1}V
\]
Thus, we must show that 
\[
CN^{-1}V+CN^{-1}VFM^{-1}V = CM^{-1}V,
\] 
or that
\[
(CN^{-1}+CN^{-1}VFM^{-1})V = CM^{-1}V.
\] 
For this it is enough to show that
$CN^{-1}+CN^{-1}VFM^{-1} = CM^{-1}$ which is equivalent to 
$CN^{-1}M+CN^{-1}VF = C$. To show this it is enough to show that
$N^{-1}M+N^{-1}VF = I$, which is equivalent to $M+VF = N$. Since this is true, the result follows.
\end{proof}
\subsection{Left pole datum of operator vessels}
Recall that the notion of a left pole set was defined in Definition \ref{dfn:leftpole}(2).

\begin{fact}\label{fact:classical}
It is a well known fact in classical system theory that given a minimal realization 
\[
S(\lambda) = D+C(\lambda I-A)^{-1}B.
\]
of a rational matrix function $S$, 
one can read the left pole data of $S$ from the pair $(A,B)$. 
See \cite[Theorem 4.2.1(iii)]{BGR} for a precise statement of this idea.
\end{fact}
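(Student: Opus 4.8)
\noindent
The plan is to localise the assertion at each pole and to recover the data from the spectral structure of the pair $(A,B)$. By definition a left pole of $S$ at $z_0$ in a direction $\phi$ is a left zero of $S^{-1}$, and since the realisation is minimal, $A$ has size equal to the McMillan degree of $S$ and the poles of $S$ are exactly the eigenvalues of $A$. It therefore suffices to fix an eigenvalue $z_0$ of $A$, to describe the principal part of $S$ at $z_0$, and to match it with the left zero structure of $S^{-1}$ at $z_0$; the directions of the resulting left poles live in the input dual $E^{*}$ by Fact \ref{fact:whereZerosLive}.

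First I would let $P$ be the spectral projection of $A$ onto its generalised eigenspace at $z_0$, so that $N:=(A-z_0 I)P$ is nilpotent, say $N^{m}=0$. Expanding $(\lambda I-A)^{-1}$ by its Laurent/partial-fraction decomposition, the singular part of $S$ at $z_0$ is
\[
\sum_{k=0}^{m-1}(\lambda-z_0)^{-k-1}\,C N^{k}P B .
\]
Thus the local pole behaviour is governed entirely by the moment sequence $C N^{k}P B$, $k\ge 0$, which is manufactured from $(A,B)$. The input-side (row) invariant of this singular part is, I claim, precisely the Jordan, equivalently controllability, structure of the restricted pair $(A|_{\operatorname{Im}P},\,PB)=(z_0 I+N,\,PB)$.

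The key step is to upgrade this claim into an exact correspondence. Minimality gives controllability of $(A,B)$, and restricting controllability to the spectral subspace (e.g.\ via the Hautus criterion at $z_0$) makes $(N,PB)$ controllable; its controllability indices $\kappa_1\ge\kappa_2\ge\cdots$ sum to $\dim\operatorname{Im}P$. From a Jordan basis of $(N,PB)$ I would build left null chains of $S^{-1}$ at $z_0$: to a chain of length $\kappa_j$ one attaches a covector $\phi_j\in E^{*}$ for which $\phi_j(\lambda)S^{-1}(\lambda)$ vanishes to order exactly $\kappa_j$ at $z_0$, that is, a left pole of $S$ of order $\kappa_j$ in direction $\phi_j$. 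Conversely, minimality forces every left pole at $z_0$ to arise in this way, so the orders and directions recorded by $\operatorname{LP}(S)$ over $z_0$ are exactly the indices and directions of $(N,PB)$. Assembling over all eigenvalues of $A$ produces the global left pole data from $(A,B)$ alone, which is the content of \cite[Theorem 4.2.1(iii)]{BGR}, whose pole-pair formalism I would invoke to organise the bookkeeping.

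The main obstacle is precisely this last matching of invariants: showing that the abstract controllability indices and input directions of $(N,PB)$ coincide, order by order and direction by direction, with the left pole orders and the $E^{*}$-directions defined through the vanishing of $\phi(\lambda)S^{-1}(\lambda)$. This demands care with the interaction between the invertible feedthrough $D$, which relates $S$ and $S^{-1}$ without altering the singular part, and the nilpotent $N$, and it is exactly where controllability enters in an essential way; I expect either to compute the local Smith--McMillan form of $S^{-1}$ at $z_0$ directly, or, more economically, to appeal to the pole-pair machinery of \cite{BGR} to close the gap.
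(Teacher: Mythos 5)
This Fact is not proved in the paper at all: it is quoted from classical realization theory, and the paper's entire justification is the citation of \cite[Theorem 4.2.1(iii)]{BGR}. So your sketch can only be assessed as a self-contained argument, and as such it fails at its central step. You identify the left pole orders of $S$ at an eigenvalue $z_0$ of $A$ with the \emph{controllability indices} of the restricted pair $(N,PB)$, calling this ``the Jordan, equivalently controllability, structure''; but these are inequivalent invariants, and it is the Jordan structure of $A|_{\operatorname{Im}P}$ alone that carries the pole orders. Concretely, take
\[
A=N=\begin{pmatrix}0&1\\0&0\end{pmatrix},\qquad B=C=D=I_2,
\]
a minimal realization with $S(\lambda)=I+\lambda^{-1}I+\lambda^{-2}N$. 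Its local Smith--McMillan form at $0$ is $\operatorname{diag}(\lambda^{-2},1)$: a single left pole of order $2$, whose direction is the nonconstant germ $\phi(\lambda)=(\lambda,\,1)$. Your recipe predicts instead the controllability indices of $(N,I_2)$, namely $(1,1)$, i.e.\ two poles of order $1$ --- the wrong answer. The same example exposes a second inaccuracy: no \emph{constant} covector attains the order-$2$ pole (constant directions give order at most $1$ here), so attaching a fixed $\phi_j\in E^{*}$ to each chain cannot work; by Fact \ref{fact:whereZerosLive} directions must be germs of sections, not vectors.

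The correct dictionary --- which is what \cite[Theorem 4.2.1(iii)]{BGR} asserts --- is that the left pole chains of $S$ at $z_0$ are exactly the germs $\phi(\lambda)=\sum_{j\ge 0}(\lambda-z_0)^{j}\,x_{j}B$, where $x_0,x_1,\dots$ runs over the \emph{left Jordan chains} of $A$ at $z_0$; hence the orders are the Jordan block sizes of $A|_{\operatorname{Im}P}$. In the example, the chain $x_0=(0,1)$, $x_1=(1,0)$ of $N$ gives $\phi(\lambda)=x_0B+\lambda x_1B=(\lambda,1)$ of order $2$, as computed directly. Controllability of $(A,B)$ enters only to guarantee that these chains yield nonzero directions of exactly the right order and that they exhaust the pole data; it is not the source of the multiplicities. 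Finally, your fallback plan --- to ``appeal to the pole-pair machinery of \cite{BGR} to close the gap'' --- is circular in this context, since the gap you would be closing is precisely the cited theorem.
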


The purpose of this section is to generalize this fact to the setting of operator vessels,
by showing that the triple $(A_1,A_2,\tilde{B})$ contains the data of the left pole set of the transfer function of a minimal operator vessel.
\begin{lem}\label{lem:TtoVess}
Let 
\[
\mathcal{B} = (A_1,A_2,\tilde{B},C,D,\tilde{D},\sigma_1,\sigma_2,\gamma,\sigma_{1*},\sigma_{2*},\gamma_*;\mathcal{H},\mathcal{E},\tilde{\mathcal{E}},\mathcal{E}_*,\tilde{\mathcal{E}_*})
\]
be a controllable operator vessel,
and assume that 
let $\xi = (\xi_1,\xi_2) \in \mathbb{C}^2$ is a regular direction for $\mathcal{B}$.
Let 
\[
\mathcal{V} = (A_1',A_2',\tilde{B}',C',D',\tilde{D}',\sigma_1,\sigma_2,\gamma,\sigma_{1*}',\sigma_{2*}',\gamma_*';\mathcal{H}',\mathcal{E},\tilde{\mathcal{E}},\mathcal{E}_*',\tilde{\mathcal{E}_*}')
\]
be another operator vessel, and let $T:\mathcal{H} \to \mathcal{H}'$ be a linear operator which satisfies:
\begin{equation}\label{eqn:TA1}
T A_{\xi} = A'_{\xi}T
\end{equation}
and 
\begin{equation}\label{eqn:TB}
T\tilde{B} = \tilde{B}'.
\end{equation}
Then it also satisfies
\[
T A_1 = A_1' T, \quad T A_2 = A_2' T.
\]
\end{lem}
\begin{proof}
Let us assume without loss of generality that $\xi = (1,0)$
(the general case is almost identical, only slightly more technically involved).
Then (\ref{eqn:TA1}) implies that $TA_1 = A_1'$.
Consider the vessel condition (A2) for the vessel $\mathcal{B}$:
\[
A_1 \tilde{B}\sigma_2 - A_2 \tilde{B} \sigma_1 + \tilde{B} \gamma = 0.
\]
Multiplying this relation with $T$ from the left, and using (\ref{eqn:TA1}) and (\ref{eqn:TB})
we obtain:
\[
A_1'\tilde{B}'\sigma_2  + \tilde{B}' \gamma  =  TA_2\tilde{B}\sigma_1
\]
On the other hand, the vessel condition (A2) for the vessel $\mathcal{V}$ implies that:
\[
A_1' \tilde{B}'\sigma_2 + \tilde{B}' \gamma = A_2' \tilde{B}' \sigma_1.
\]
These two equations imply that
\[
TA_2\tilde{B}\sigma_1 = A_2' \tilde{B}' \sigma_1
\]
and using (\ref{eqn:TB}) again, and by canceling $\sigma_1$, 
which by our regularity assumption on the direction $(1,0)$ is invertible,
we deduce that
\begin{equation}\label{eqn:TA20}
TA_2\tilde{B} = A_2'T\tilde{B}.
\end{equation}
We now prove by induction that for all $n \ge 0$ there is an equality
\begin{equation}\label{eqn:TAn}
TA_2A_1^n\tilde{B} = A_2'TA_1^n\tilde{B}.
\end{equation}
The case $n = 0$ is (\ref{eqn:TA20}) established above.
Assume by induction that (\ref{eqn:TAn} holds for a given $n$.
Since $A_1$ and $A_2$ commute, we obtain:
\[
TA_2A_1^{n+1}\tilde{B} = TA_1A_2A_1^n\tilde{B} = A_1'TA_2A_1^n\tilde{B} = A_1'A_2'TA_1^n\tilde{B}
\]
where the last equality follows from the induction hypothesis.
Since $A_1'$ and $A_2'$ also commute, we have that:
\[
A_1'A_2'TA_1^n\tilde{B} = A_2'A_1'TA_1^n\tilde{B} = A_2'TA_1^{n+1}\tilde{B},
\]
as claimed.
The result now follows from (\ref{eqn:TAn}) and the controllability assumption.
\end{proof}

\begin{lem}\label{lem:newVessel}
In the situation of Lemma \ref{lem:TtoVess},
the collection
\[
\mathcal{V}' = (A_1,A_2,\tilde{B},C'T,D',\tilde{D}',\sigma_1,\sigma_2,\gamma,\sigma_{1*}',\sigma_{2*}',\gamma_*';\mathcal{H},\mathcal{E},\tilde{\mathcal{E}},\mathcal{E}_*',\tilde{\mathcal{E}_*}')
\]
is an operator vessel which satisfies
\[
S_{\mathcal{V}'} = S_{\mathcal{V}}.
\]
\end{lem}
\begin{proof}
Let us first verify that $\mathcal{V}'$ is a vessel. 
The vessel conditions (A1) and (A2) are satisfied because $\mathcal{B}$ is a vessel.
Since $\mathcal{V}$ is a vessel, its (A3) condition says that
\[
\sigma_{2*}'C'A_1' - \sigma_{1*}'C'A_2' + \gamma_*'C' = 0
\]
Multiplying this from the right by $T$, and using the fact that $A_1'T = TA_1$ and $A_2'T = TA_2$ 
implies that
\[
\sigma_{2*}'C'TA_1 - \sigma_{1*}'C'TA_2 + \gamma_*'C'T = 0
\]
so that $\mathcal{V}'$ satisfies (A3).
The first two equations of condition (A4) are satisfied for $\mathcal{V}'$ because they are satisfied for $\mathcal{V}$.
As for the last equation of (A4),
we must check that
\[
\gamma_*'D' = \tilde{D}'\gamma + \sigma_{1*}'C'T\tilde{B}\sigma_2 - \sigma_{2*}'C'T\widetilde{B}\sigma_1.
\]
Since $T\tilde{B} = \tilde{B}'$, this is equivalent to 
\[
\gamma_*'D' = \tilde{D}'\gamma + \sigma_{1*}'C'\tilde{B}'\sigma_2 - \sigma_{2*}'C'\widetilde{B}'\sigma_1,
\]
and this follows from the (A4) condition of $\mathcal{V}$.
This proves that $\mathcal{V}'$ is an operator vessel.
It remains to show that $S_{\mathcal{V}'} = S_{\mathcal{V}}$.
Given a point $\lambda = (\lambda_1,\lambda_2)$ on the curve associated to $\mathcal{V}$ and $\mathcal{V}'$,
let us choose a direction $\xi = (\xi_1,\xi_2)$ such that 
\[
\xi_1(\lambda_1I-A_1) + \xi_2(\lambda_2I-A_2), \quad \xi_1(\lambda_1I-A_1') + \xi_2(\lambda_2I-A_2')
\]
are both invertible.
Notice that
\begin{eqnarray}\label{eqn:invWithT}
T \cdot \left(\xi_1(\lambda_1I-A_1) + \xi_2(\lambda_2I-A_2)\right) = 
\xi_1(\lambda_1T-TA_1) + \xi_2(\lambda_2T-TA_2) = \nonumber\\
= \xi_1(\lambda_1T-A_1'T) + \xi_2(\lambda_2T-A_2'T) =
\left(\xi_1(\lambda_1I-A_1') + \xi_2(\lambda_2I-A_2')\right) \cdot T.\nonumber
\end{eqnarray}
This implies that
\begin{equation}
\left(\xi_1(\lambda_1I-A_1') + \xi_2(\lambda_2I-A_2')\right)^{-1} \cdot T  = T \cdot \left(\xi_1(\lambda_1I-A_1) + \xi_2(\lambda_2I-A_2)\right)^{-1}.
\end{equation}
The transfer function of $\mathcal{V}'$ is given by:
\[
D' + C'T \cdot \left(\xi_1(\lambda_1I-A_1) + \xi_2(\lambda_2I-A_2)\right)^{-1} \cdot \tilde{B}(\xi_1\sigma_1+\xi_2\sigma_2)
\]
By (\ref{eqn:invWithT}), this is equal to
\begin{eqnarray}
D' + C' \cdot \left(\xi_1(\lambda_1I-A_1') + \xi_2(\lambda_2I-A_2')\right)^{-1} \cdot T \tilde{B}(\xi_1\sigma_1+\xi_2\sigma_2) = \nonumber\\
= D' + C' \cdot \left(\xi_1(\lambda_1I-A_1') + \xi_2(\lambda_2I-A_2')\right)^{-1} \cdot \tilde{B}'(\xi_1\sigma_1+\xi_2\sigma_2) \nonumber
\end{eqnarray}
and the latter is exactly the transfer function of $\mathcal{V}$.
\end{proof}

The following result is an elaboration of (\ref{fact:classical}).

\begin{prop}\label{prop:classicalReal}
Let $\Phi(\lambda) = D+C(\lambda I-A)^{-1}B$ be a minimal realization of a rational matrix function,
where $A:\mathcal{H} \to \mathcal{H}$, $B:\mathcal{E} \to \mathcal{H}$.
Let $\Psi(\lambda) = D'+C'(\lambda I - A')^{-1}B'$ be a minimal realization of another rational matrix function,
where $A':\mathcal{H}' \to \mathcal{H}'$, $B':\mathcal{E} \to \mathcal{H}'$.
Suppose $D,D'$ are invertible.
If $\opn{LP}(\Psi) \subseteq \opn{LP}(\Phi)$ then there exists a surjective linear map $T:\mathcal{H} \to \mathcal{H}'$,
such that $TA = A'T$, and $TB = B'$. 
\end{prop}
\begin{proof}
By multiplying $\Phi$ and $\Psi$ from the left by $D^{-1}$ and $D'^{-1}$ if necessary,
we may assume without loss of generality that $D = I$ and $D' = I$.
We will show that there is an injective linear map $T^{*}:(\mathcal{H}')^* \to (\mathcal{H})^*$\footnote{We remark that in this proof the star denotes the transpose or the adjoint of an operator rather than the hermitian adjoint.} such that
\[
T^{*} A'^{*} = A^{*}T^{*}, \quad B'^{*} = B^{*} T^{*}.
\]
Then, the corresponding surjective map $T:\mathcal{H} \to \mathcal{H}'$ will be the required surjection.
We do this in two steps.
\begin{steps}[wide, labelwidth=!, labelindent=0pt]
\item Assume first that both $\Phi$ and $\Psi$ have the property that they do not have a pole and a zero at the same point. For $\Phi$ this means that the spectrum of $A$ and the spectrum of $A^{\times} = A-BC$ do not intersect. Similarly, the claim about $\Psi$ is that the spectrum of $A'$ and the spectrum of $A'^{\times} = A'-B'C'$ do not intersect.
Under this assumption, 
according to (a left pole version of) \cite[Corollary 12.3.2]{BGR}\footnote{in the notation of \cite{BGR}, this result is applied to the set $\sigma$ equal to the spectrum of $A$; notice that the assumption  $\opn{LP}(\Psi) \subseteq \opn{LP}(\Phi)$ implies that 
the corresponding null-pole subspace for $\Psi$ is contained in that for $\Phi$.},
for any $x' \in (\mathcal{H}')^*$, 
there is a unique $x \in (\mathcal{H})^*$ such that
\[
B^{\prime *} (\lambda I - A')^{* -1} x' = B^* (\lambda I - A)^{* -1} x
\]
Setting $T^*(x') = x$ gives us the required injective linear map.
\item In the general case where $\Phi$ or $\Psi$ might have poles and zeroes at the same point,
the result \cite[Corollary 12.3.2]{BGR} mentioned above is more difficult.
To avoid analyzing this more complicated situation, 
we may instead use the fact that minimality implies that the pairs $(A,B)$ and $(A',B')$ are controllable. Hence, by the classical spectrum assignment problem,
we may replace $C$ by some $\widetilde{C}$, 
such that the spectrum of $A$ and the spectrum of $A-B\widetilde{C}$ are disjoint,
and the realization $\widetilde{\Phi}(\lambda) = D+\widetilde{C}(\lambda I-A)^{-1}B$ is still minimal.
Similarly,  we replace $C'$ by some $\widetilde{C}'$, such that the spectrum of $A'$ and the spectrum of $A'-B'\widetilde{C}'$ are disjoint, and the realization $\widetilde{\Psi}(\lambda) = D'+\widetilde{C}'(\lambda I - A')^{-1}B'$ is minimal. 
Since we did not replace $(A,B)$ and $(A',B')$, 
and since these realizations of $\widetilde{\Phi}$ and $\widetilde{\Psi}$ are minimal, 
by \cite[Theorem 4.2.1(iii)]{BGR},
we have that
\[
\opn{LP}(\widetilde{\Psi}) = \opn{LP}(\Psi) \subseteq \opn{LP}(\Phi) = \opn{LP}(\widetilde{\Phi}).
\]
It follows that we may apply Step 1 to these realizations of $\widetilde{\Phi}, \widetilde{\Psi}$,
and deduce the existence of the required injective linear map $T^{*}:(\mathcal{H}')^* \to (\mathcal{H})^*$.
\end{steps}
\end{proof}

Before the next lemma, let us recall the \textbf{restoration formula} for operator vessels,
see \cite[Equation (2-18)]{Vi} and \cite[Section 10.3]{LKMV} for more details.

\begin{fact}
Let $\mathcal{B}$ be an operator vessel, and let $\xi = (\xi_1,\xi_2) \in \mathbb{C}^2$ be a regular direction.
Suppose the algebraic curve $\mathbf{C}$ associated to $\mathcal{B}$ is of degree $m$.
Then, generically, given $\lambda \in \mathbb{C}$,
the line $\xi_1\cdot x + \xi_2\cdot y = \lambda$ intersects the curve $\mathbf{C}$ in $m$ distinct points.
Let us denote them by $(\lambda_1^1,\lambda_2^1),\dots, (\lambda_1^m,\lambda_2^m)$.
Then it holds that there is a (nonorthogonal) direct sum decomposition
\[
{\mathcal E} = E_{\opn{in}}(\lambda_1^1,\lambda_2^1) \dotplus \dots  \dotplus E_{\opn{in}}(\lambda_1^m,\lambda_2^m). 
\]
The restoration formula then says that
\begin{equation}\label{eqn:rest-formula}
S_{\xi}(\lambda) = \sum_{i=1}^m S_{\mathcal{B}}(\lambda_1^i,\lambda_2^i) \cdot P(\xi_1,\xi_2,\lambda_1^i,\lambda_2^i),
\end{equation}
where we denoted by $P(\xi_1,\xi_2,\lambda_1^i,\lambda_2^i)$ the corresponding projection operators,
which are holomorphic at all points on the curve $\mathbf C$ where the line $\xi_1\cdot x + \xi_2\cdot y = \lambda$
intersects $\mathbf{C}$ at $m$ distinct points.
\end{fact}

\begin{lem}\label{lem:directRegular}
Let 
\[
\mathcal{B} = (A_1,A_2,\tilde{B},C,D,\tilde{D},\sigma_1,\sigma_2,\gamma,\sigma_{1*},\sigma_{2*},\gamma_*;\mathcal{H},\mathcal{E},\tilde{\mathcal{E}},\mathcal{E}_*,\tilde{\mathcal{E}_*})
\]
and
\[
\mathcal{V} = (A_1',A_2',\tilde{B}',C',D',\tilde{D}',\sigma_1,\sigma_2,\gamma,\sigma_{1*}',\sigma_{2*}',\gamma_*';\mathcal{H}',\mathcal{E},\tilde{\mathcal{E}},\mathcal{E}_*',\tilde{\mathcal{E}_*}')
\]
be two operator vessels which share the same input bundle,
and set $S = S_{\mathcal{V}}, T = S_{\mathcal{B}}$.
Assume further that $\mathcal{B}$ is controllable.
Suppose that $\opn{LP}(S) \subseteq \opn{LP}(T)$.
Then there exists a direction $\xi = (\xi_1,\xi_2)$ which is a regular direction for both vessels,
and such that $\opn{LP}(S_{\xi}) \subseteq \opn{LP}(T_{\xi})$,
where 
\[
S_{\xi}(\lambda) = D+C(\lambda I - (\xi_1A_1 +\xi_2A_2))^{-1}\tilde{B}(\xi_1\sigma_1+\xi_2\sigma_2),
\]
and
\[
T_{\xi}(\lambda) = D'+C'(\lambda I - (\xi_1A_1' +\xi_2A_2'))^{-1}\tilde{B}'(\xi_1\sigma_1+\xi_2\sigma_2).
\]
\end{lem}
\begin{proof}
Let $Y \subseteq \mathbf{C}$ be the union of the joint spectrum of $(A_1,A_2)$ and the joint spectrum of $(A_1',A_2')$.
This is a finite set of points on the curve $\mathbf{C}$.
Take a direction $\xi = (\xi_1,\xi_2)$ such that $\xi$ is a regular direction for $\mathcal{B}$ and $\mathcal{V}$,
and such that for each $(\lambda_1,\lambda_2) \in Y$,
the line $\xi_1\cdot x + \xi_2 \cdot y = \xi_1 \cdot \lambda_1 + \xi_2\cdot \lambda_2$
will intersect $\mathbf{C}$ in $m$ distinct points,
and such that each of these lines contains only one point of $Y$.
We will show that $\opn{LP}(S_{\xi}) \subseteq \opn{LP}(T_{\xi})$.
Assume $S_{\xi}$ has a left pole of order $n$ at the point $z_0$ at direction $\phi$.
Then $z_0 \in \opn{Spec}(\xi_1 A_1 + \xi_2 A_2)$.
Since $A_1$ and $A_2$ commute, 
this implies that $z_0 = \xi_1 \lambda^0_1 + \xi_2 \lambda^0_2$ 
for some $(\lambda_1^0,\lambda^0_2) \in \opn{Spec}(A_1,A_2)$,
and in particular, by (\ref{fact:CH}), 
we have that $(\lambda^0_1,\lambda^0_2) \in \mathbf{C}$.
Using (\ref{eqn:rest-formula}), we may write
\[
S_{\xi}(z) = \sum_{i=1}^m S(\lambda_1^i(z),\lambda_2^i(z)) \cdot P(\xi_1,\xi_2,\lambda_1^i(z),\lambda_2^i(z))
\]
in a neighborhood of $z = z_0$.
Our above choice of $(\xi_1,\xi_2)$ 
guarantees that all summands in this sum except one are holomorphic in a neighborhood of $z_0$. 
Hence, up to equivalence of left pole datum, we may assume that
there is some $1 \le j \le n$, such that
$S(\lambda^j(z)) \cdot P(\xi_1,\xi_2,\lambda^j(z))$ has a left pole of order $n$ at the point $z_0$ at direction $\phi$.
Since $\opn{LP}(S) \subseteq \opn{LP}(T)$, 
we have that $\opn{LP}(S\circ P) \subseteq \opn{LP}(T \circ P)$.
The result now follows from applying (\ref{eqn:rest-formula}) to $T_{\xi}$.
\end{proof}

\begin{thm}\label{thm:Interpolation}
Let
\[
\mathcal{B} = (A_1,A_2,\widetilde{B},C,D,\widetilde{D},\sigma_1,\sigma_2,\gamma,\sigma_{1*},\sigma_{2*},\gamma_*;\mathcal{H},\mathcal{E},\widetilde{\mathcal{E}},\mathcal{E}_*,\widetilde{\mathcal{E}_*})
\]
be a minimal operator vessel, and let $S = S_{\mathcal{B}}:E_{\opn{in}}\to E_{\opn{out}}$ be its transfer function. 
Let $R:E_{\opn{in}}\to E_{\opn{in}}$ be a meromorphic bundle map, such that $R_{|L_\infty} = 1$. 
Then 
\[
\opn{LP}(R)\subseteq \opn{LP}(S)
\]
if and only if the following holds:
there exists an operator $K:\mathcal{H}\to \mathcal{E}$ such that the collection 
\[
\mathcal{V} = (A_1,A_2,\widetilde{B},K,I,I,\sigma_1,\sigma_2,\gamma,\sigma_1,\sigma_2,\gamma;\mathcal{H},\mathcal{E},\widetilde{\mathcal{E}},\mathcal{E},\widetilde{\mathcal{E}})
\]
is an operator vessel such that $R=S_{\mathcal{V}}$.
\end{thm}
\begin{proof}
Assume first that $\opn{LP}(R)\subseteq \opn{LP}(S)$.
Let us choose a minimal vessel
\[
\mathcal{V}' = (A_1',A_2',\tilde{B}',C',I,I,\sigma_1,\sigma_2,\gamma,\sigma_1,\sigma_2,\gamma;\mathcal{H}',\mathcal{E},\tilde{\mathcal{E}},\mathcal{E}_*,\tilde{\mathcal{E}_*})
\]
such that $S_{\mathcal{V}'} = R$.
By Lemma \ref{lem:directRegular}, 
let us choose a regular direction $\xi$ such that $\opn{LP}(R_{\xi})\subseteq \opn{LP}(S_{\xi})$.
According to Proposition \ref{prop:MinimalRealization} the realizations of $R_{\xi}$ and $S_{\xi}$ obtained 
by extending the vessels $\mathcal{V}'$ and $\mathcal{B}$ along the direction $\xi$ are both minimal.
By Proposition \ref{prop:classicalReal}, 
we deduce that there exists a surjective linear map $T:\mathcal{H} \to \mathcal{H}'$ such that
$TA_{\xi} = A'_{\xi}T$ and $T\tilde{B}_{\xi} = \tilde{B}'_{\xi}$.
By canceling the invertible matrix $\sigma_{\xi}$ from the right, 
the latter implies that $T\tilde{B} = \tilde{B}'$.
We may now apply Lemma \ref{lem:TtoVess},
and deduce that $TA_1 = A_1'$ and $TA_2= A_2'$.
This in turn implies by Lemma \ref{lem:newVessel} that
\[
\mathcal{V} = (A_1,A_2,\tilde{B},C'T,I,I,\sigma_1,\sigma_2,\gamma,\sigma_1,\sigma_2,\gamma;\mathcal{H},\mathcal{E},\tilde{\mathcal{E}},\mathcal{E}_*,\tilde{\mathcal{E}_*})
\]
is an operator vessel which satisfies $S_{\mathcal{V}} = R$, as claimed.

Conversely, suppose we may realize $R$ as $R=S_{\mathcal{V}}$, where 
\[
\mathcal{V} = (A_1,A_2,\widetilde{B},K,I,I,\sigma_1,\sigma_2,\gamma,\sigma_1,\sigma_2,\gamma;\mathcal{H},\mathcal{E},\widetilde{\mathcal{E}},\mathcal{E},\widetilde{\mathcal{E}})
\]
is an operator vessel. 
Let $\xi = (\xi_1,\xi_2)$ be a direction that is regular for both $\mathcal{B}$ and $\mathcal{V}$.
Then we may write:
\[
R_{\xi}(\lambda) = I+K(\lambda I - A_{\xi})^{-1}\widetilde{B}_{\xi}
\]
and
\[
S_{\xi}(\lambda) = D+C(\lambda I - A_{\xi})^{-1}\widetilde{B}_{\xi}.
\]
It follows that $(A_{\xi},\widetilde{B}_{\xi})$ is a global left pole pair for both $R_{\xi}$ and $S_{\xi}$.
Moreover, by Proposition \ref{prop:MinimalRealization}, 
the above realization of $S_{\xi}$ is minimal.
Then, as follows from (\ref{fact:classical}), 
we see that 
\[
\opn{LP}(R_{\xi})\subseteq \opn{LP}(S_{\xi}),
\]
and since $R$ and $S$ are obtained by restrictions of $R_{\xi}$ and $S_{\xi}$,
we deduce that 
\[
\opn{LP}(R)\subseteq \opn{LP}(S).
\]
\end{proof}

\subsection{The pole placement theorem}

\begin{lem}\label{lem:DivisorDiff}
Let $X$ be a compact Riemann surface, and let $\pi_E:E\to X, \pi_F:F\to X$ be two holomorphic vector bundles over $X$. Let $S,T:E\to F$ be two meromorphic bundle maps, and let $R = T^{-1} \circ S:E \to E$. Then $\opn{LP}(R)\subseteq \opn{LP}(S)$ if and only if $\opn{LZ}(T) \subseteq \opn{LZ}(S)$.
\end{lem}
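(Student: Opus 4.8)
The plan is to reduce the statement to a pointwise, local computation and then to a single comparison of valuations. Both inclusions $\opn{LP}(R)\subseteq\opn{LP}(S)$ and $\opn{LZ}(T)\subseteq\opn{LZ}(S)$ are assertions about sets of triples $(\phi,n,z_0)$, so it suffices to fix a point $z_0\in X$ with a local coordinate $z$ and to verify the equivalence among the triples supported at $z_0$. Choosing holomorphic trivializations of $E$ and $F$ near $z_0$ turns $S,T$ into germs of meromorphic matrix functions; since $T$ is invertible as a bundle map we have $\opn{rank}E=\opn{rank}F$, so these are square matrices over the field of meromorphic germs, and $R=T^{-1}S$, $R^{-1}=S^{-1}T$. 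For a nonzero meromorphic row germ $\eta$ and a meromorphic matrix germ $W$ I set $\opn{ord}(\eta W)=\min_i\opn{ord}_{z_0}((\eta W)_i)$ and $f_W(\eta)=\opn{ord}(\eta W)-\opn{ord}(\eta)$; this quantity is unchanged under scaling $\eta$ by a nonzero meromorphic scalar, hence depends only on the projective direction determined by $\eta$.

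Next I would rewrite each inclusion through $f$. Unwinding the definition of a left zero, $\opn{LZ}(T)\subseteq\opn{LZ}(S)$ holds at $z_0$ precisely when, for every holomorphic row germ $\psi$ with $\psi(z_0)\ne 0$ and $\opn{ord}(\psi T)\ge 1$, one has $\opn{ord}(\psi S)\ge\opn{ord}(\psi T)$; since $\opn{ord}(\psi)=0$ for such $\psi$, and by the scaling invariance just noted, this is equivalent to saying that there is no direction $\eta$ with $f_T(\eta)\ge 1$ and $f_S(\eta)<f_T(\eta)$. For the pole side I would use $\opn{LP}(W)=\opn{LZ}(W^{-1})$ together with $R^{-1}=S^{-1}T$, and perform the substitution $\eta=\phi S^{-1}$, i.e. $\phi=\eta S$. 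As $\phi$ runs over the holomorphic row germs with $\phi(z_0)\ne0$ (the directions in $E^{*}$, cf. Fact \ref{fact:whereZerosLive}), $\eta$ runs, up to scaling, over all meromorphic directions, subject to $\opn{ord}(\eta S)=0$. Under this substitution $\opn{ord}(\phi S^{-1})=\opn{ord}(\eta)=-f_S(\eta)$ and $\opn{ord}(\phi R^{-1})=\opn{ord}(\eta T)=f_T(\eta)-f_S(\eta)$.

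Substituting, $\opn{LP}(R)\subseteq\opn{LP}(S)$ holds at $z_0$ exactly when, for every $\eta$ with $f_T(\eta)-f_S(\eta)\ge 1$, one has $-f_S(\eta)\ge f_T(\eta)-f_S(\eta)$, that is $f_T(\eta)\le 0$; equivalently, there is no direction $\eta$ with $f_T(\eta)\ge 1$ and $f_S(\eta)<f_T(\eta)$. This is word for word the criterion obtained for $\opn{LZ}(T)\subseteq\opn{LZ}(S)$, so the two inclusions are equivalent at $z_0$, and hence globally. The role of the bundle map $S$ is precisely to supply, via $\eta\mapsto\eta S$, the dictionary between the directions in $F^{*}$ that index left zeros and the directions in $E^{*}$ that index left poles.

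The main point requiring care — and the step I expect to be the real obstacle — is this bookkeeping of direction spaces together with the normalization of representatives: one must check that $\phi\mapsto\phi S^{-1}$ is a bijection between the holomorphic-unit rows (the admissible pole directions in $E^{*}$) and the meromorphic rows $\eta$ with $\opn{ord}(\eta S)=0$, and that every projective direction admits a representative of each required type, so that the quantifier over $\phi$ and the quantifier over $\eta$ genuinely match. A secondary point is that the equivalence holds only once left zeros and poles are counted with positive order (the standard convention $n\ge 1$); allowing $n\le 0$ breaks the equivalence already in the scalar case $p=1$, where $\opn{LZ}(T)\subseteq\opn{LZ}(S)$ becomes $\opn{ord}(S)\ge\opn{ord}(T)$ while $\opn{LP}(R)\subseteq\opn{LP}(S)$ becomes $\opn{ord}(T)\le 0$, so I would make this convention explicit at the outset.
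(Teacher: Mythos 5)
Your proposal is correct, and it rests on the same local mechanism as the paper's proof --- reduce to germs at a point with $E$ and $F$ trivialized, then track orders of row germs under right multiplication by $S$, $T$ and $R$ --- but it organizes that mechanism in a genuinely different way. The paper argues one implication by contradiction: given a left zero $\phi(z)T(z)=z^{k}\psi(z)$ of $T$, it writes $\psi(z)R(z)=z^{l}\alpha(z)$ and rules out $l<0$ by noting that $(\alpha,-l,0)$ would then be a left pole of $R$, so the hypothesis $\opn{LP}(R)\subseteq\opn{LP}(S)$ produces a holomorphic unit $\beta$ with $\beta S=z^{-n}\alpha$ for some $n\ge -l$, while the computation gives $\phi S=z^{k+l}\alpha$; cancelling the invertible $S$ forces $\beta=z^{-n-k-l}\phi$, which has a pole, a contradiction; the converse is left as ``proved similarly.'' You instead reduce \emph{both} inclusions to one scaling-invariant criterion (no direction $\eta$ with $f_T(\eta)\ge 1$ and $f_S(\eta)<f_T(\eta)$), so the equivalence drops out symmetrically in a single pass; your substitution $\phi=\eta S$ is exactly the paper's identification of the pole direction $\alpha$ of $R$ with a power of $z$ times $\phi S$, made into a systematic dictionary rather than used once inside a contradiction. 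What your route buys is (i) an honest proof of both directions rather than one direction plus ``similarly,'' and (ii) explicit normalization bookkeeping (unit rows versus rows with $\opn{ord}(\eta S)=0$), which you correctly flag as the only delicate point and correctly resolve: $S$ is invertible over the field of meromorphic germs, and any direction rescales by $z^{-\opn{ord}(\eta)}$ or $z^{-\opn{ord}(\eta S)}$ to a representative of either normalized type. Finally, your closing remark about the convention $n\ge 1$ is a genuine catch rather than pedantry: the paper's definitions of $\opn{LZ}$ and $\opn{LP}$ place no restriction on $n$, and if triples with $n\le 0$ are admitted the lemma is false as stated (already for $S=T=z$, $R=1$ in the scalar case, where $(\phi,0,0)\in\opn{LP}(R)\setminus\opn{LP}(S)$ while $\opn{LZ}(T)\subseteq\opn{LZ}(S)$ trivially), so the positive-order convention you impose is actually needed for the statement to be true.
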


Note that the condition $\opn{LP}(R)\subseteq \opn{LP}(S)$ makes sense,
because by (\ref{fact:whereZerosLive}), elements of each of these sets are triples 
$(\phi,n,z_0)$ where $\phi$ is a germ of an holomorphic section of $E^*$.

\begin{proof}
As this is a local question, 
we may assume that $X=\mathbb{C}$, 
and that $E,F$ are trivial. 
Suppose $\opn{LP}(R)\subseteq \opn{LP}(S)$. 
Assume $T$ has a left zero of order $k$ at direction $\phi$ at the origin. 
Let $\psi \in \mathcal{O}^{\times}_0$, 
such that $\phi(z)T(z) = z^k\psi(z)$. 
Hence, $\phi(z)S(z) = \phi(z) T(z)R(z) = z^k \psi(z) R(z)$. 
Assume $\psi(z)R(z) = z^l \alpha(z)$, where $\alpha \in \mathcal{O}^{\times}_0$.
If $l\ge 0$, 
then  $S$ has a zero in direction $\phi$ of order greater or equal to $k$, 
which proves the claim. 
Otherwise, if $l<0$, then setting $m=-l$, we see that $R$ has a left pole of order $m$ at direction $\alpha$. 
Hence, since $\opn{LP}(R)\subseteq \opn{LP}(S)$, 
it follows that for some $n\ge m$, there is a local section $\beta$ near $0$, such that $\beta(z)S(z) = z^{-n} \alpha(z)$. 
On the other hand, the above calculation shows that $\phi(z)S(z) = z^{k-m}\alpha(z)$. 
Since $m-k<n$, we get a contradiction, so $l\ge 0$, and the claim follows. The converse is proved similarly.
\end{proof}

Here is the main result of this paper.

\begin{thm}\label{thm:main}
Let $\mathcal{B}$ be a minimal operator vessel. Let $T:E_{\opn{in}}\to E_{\opn{out}}$ be a meromorphic bundle map whose poles do not lie over the singularities of $\mathbf{C}$. Then there is an admissible state feedback operator  $F:\mathcal{H}\to \mathcal{E}$ such that $T$ is the transfer function of the closed loop system $\ClosedLoop$ if and only if $\opn{LZ}(T)\subseteq \opn{LZ}(S_{\mathcal{B}})$ and $T_{|_{L_\infty}}={S_{\mathcal{B}}}_{|_{L_\infty}}$.
\end{thm}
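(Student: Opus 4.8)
The plan is to derive the theorem by chaining together the two structural results already established: the factorization $S_{\mathcal{B}} = S_{\ClosedLoop}\circ S_{\Controller}$ of Proposition~\ref{prop:Factorization}, and the interpolation criterion of Lemma~\ref{lem:Interpolation}. The crucial observation is that the controller vessel $\Controller = (A_1,A_2,\widetilde{B},-F,I,I,\sigma_1,\sigma_2,\gamma,\sigma_1,\sigma_2,\gamma;\dots)$ attached to an admissible $F$ is \emph{exactly} a vessel of the shape $\mathcal{V}$ appearing in Lemma~\ref{lem:Interpolation}, under the identification $K=-F$. Writing $S=S_{\mathcal{B}}$ throughout, I would study the auxiliary meromorphic bundle map $R=T^{-1}\circ S\colon E_{\opn{in}}\to E_{\opn{in}}$ and translate all the hypotheses and conclusions of the theorem into statements about $R$ and its controller vessel.

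For the forward implication, suppose $F$ is admissible with $S_{\ClosedLoop}=T$. By Proposition~\ref{prop:Controller} the collection $\Controller$ is a vessel, and since its feedthrough operator is $I$ its transfer function satisfies $(S_{\Controller})_{|L_\infty}=I$. Proposition~\ref{prop:Factorization} gives $S=T\circ S_{\Controller}$, hence $S_{\Controller}=T^{-1}\circ S=R$. Applying Lemma~\ref{lem:Interpolation} to $\mathcal{V}=\Controller$ yields $\opn{LP}(R)\subseteq\opn{LP}(S)$, and Lemma~\ref{lem:DivisorDiff} converts this into $\opn{LZ}(T)\subseteq\opn{LZ}(S)$. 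The condition at infinity is read off the factorization restricted to $L_\infty$: since $S_{|L_\infty}=D$ and $(S_{\Controller})_{|L_\infty}=I$, we get $T_{|L_\infty}=(S_{\ClosedLoop})_{|L_\infty}=D=S_{|L_\infty}$.

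For the converse, assume $\opn{LZ}(T)\subseteq\opn{LZ}(S)$ and $T_{|L_\infty}=S_{|L_\infty}=D$. Because $D$ is invertible, $T$ is invertible over $L_\infty$, so $\det T\not\equiv 0$ and $R=T^{-1}\circ S$ is a genuine meromorphic bundle map $E_{\opn{in}}\to E_{\opn{in}}$ with $R_{|L_\infty}=D^{-1}D=I$. Lemma~\ref{lem:DivisorDiff} turns the zero containment into $\opn{LP}(R)\subseteq\opn{LP}(S)$, so Lemma~\ref{lem:Interpolation} furnishes an operator $K\colon\mathcal{H}\to\mathcal{E}$ making $\mathcal{V}=(A_1,A_2,\widetilde{B},K,I,I,\sigma_1,\sigma_2,\gamma,\sigma_1,\sigma_2,\gamma;\dots)$ a vessel with $S_{\mathcal{V}}=R$. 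Setting $F=-K$, Proposition~\ref{prop:Controller} shows $F$ is admissible and that $\Controller=\mathcal{V}$, so $S_{\Controller}=R$. Finally Proposition~\ref{prop:Factorization} gives $S_{\ClosedLoop}=S\circ S_{\Controller}^{-1}=S\circ R^{-1}=S\circ S^{-1}\circ T=T$, as required.

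The genuine difficulty is not in the theorem itself but has been isolated into Lemma~\ref{lem:Interpolation}, whose converse direction relies on the vessel realization theorems of the cited works and on the minimality of $\mathcal{B}$; the hypothesis that the poles of $T$ avoid the singularities of $\mathbf{C}$, together with assumption~(\ref{fact:smooth}), is what keeps those realization results and the local pole/zero calculus of Lemma~\ref{lem:DivisorDiff} in force. Within the present argument the only points needing care are the two flagged above: verifying that $R=T^{-1}\circ S$ is an honest meromorphic bundle map (for which the normalization $T_{|L_\infty}=S_{|L_\infty}$ is exactly what supplies generic invertibility of $T$), and the clean identification of the controller vessel with the interpolating vessel $\mathcal{V}$, which is what allows the two black boxes, Factorization and Interpolation, to be composed without friction.
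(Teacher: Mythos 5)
Your proposal is correct and follows essentially the same route as the paper's own proof: form $R=T^{-1}\circ S$, use Lemma~\ref{lem:DivisorDiff} to pass between $\opn{LZ}(T)\subseteq\opn{LZ}(S)$ and $\opn{LP}(R)\subseteq\opn{LP}(S)$, invoke Lemma~\ref{lem:Interpolation} to produce $K$ (with $F=-K$ identified via Proposition~\ref{prop:Controller} as the controller vessel), and close with the factorization of Proposition~\ref{prop:Factorization}. Your added remarks on why $T$ is generically invertible and why the conditions at infinity match are sound refinements of the same argument.
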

\begin{proof}
Suppose first that $\opn{LZ}(T)\subseteq \opn{LZ}(S_{\mathcal{B}})$ and $T_{|_{L_\infty}}={S_{\mathcal{B}}}_{|_{L_\infty}}$. 
Let 
\[
R = T^{-1}\circ S_{\mathcal{B}} : E_{\opn{in}} \to E_{\opn{in}}.
\]
Clearly, $R_{|L_\infty} = 1$.
Moreover, by Lemma \ref{lem:DivisorDiff},
we have that $\opn{LP}(R)\subseteq \opn{LP}(S_{\mathcal{B}})$. 
Hence, by Theorem \ref{thm:Interpolation}, there is an operator 
$K:\mathcal{H}\to \mathcal{E}$ such that the collection
\[
\mathcal{V} = (A_1,A_2,\widetilde{B},K,I,I,\sigma_1,\sigma_2,\gamma,\sigma_1,\sigma_2,\gamma;\mathcal{H},\mathcal{E},\widetilde{\mathcal{E}},\mathcal{E},\widetilde{\mathcal{E}})
\]
is an operator vessel, and such that $R = S_{\mathcal{V}}$. 
Let $F = -K$. 
Note that by definition of the controller vessel, $\mathcal{V} = \Controller$.
In particular, by Proposition \ref{prop:Controller}, 
the operator $F$ is an admissible state feedback operator. 
Hence, by Proposition \ref{prop:Factorization}, we have that 
\[
S_{\ClosedLoop} = S_{\OpenLoop} \circ R^{-1},
\]
so that $T = S_{\ClosedLoop}$.
Conversely, suppose $T = S_{\ClosedLoop}$ for some admissible state feedback operator 
$F:\mathcal{H}\to \mathcal{E}$. 
Note that the controller vessel $\Controller$ is of the form required in Theorem \ref{thm:Interpolation}, 
hence the map $R = S_{\Controller}$ satisfies: $R_{|L_\infty} = 1$ and $\opn{LP}(R)\subseteq \opn{LP}(S_{\mathcal{B}})$. 
By Proposition \ref{prop:Factorization},
$T = S_{\mathcal{B}}\circ R^{-1}$. 
The result now follows from Lemma \ref{lem:DivisorDiff}.
\end{proof}

\section{Pole placement over line bundles}

\subsection{General theory}

In this final section, we analyze further the pole placement problem under the assumption that the vector bundles $E_{\opn{in}}$ and $E_{\opn{out}}$ are vector bundles of rank $1$, that is, line bundles. 
In this case, we will show below precisely how the geometry of the compact Riemann surface $X$ dictates the solution of the pole placement problem.
We further remark that many of the results of this section can be generalized to higher dimensional vector bundles, 
using the theory of matrix zero-pole divisors (see \cite{BCV2,BCV16,Ty,We}). 
For simplicity, we will however restrict ourselves to the line bundle case.

\begin{fact}
Recall that we assumed that the polynomials $\mathbf{p}_{0}(\lambda_1,\lambda_2)$ and $\mathbf{p}_{0*}(\lambda_1,\lambda_2)$ are given by 
\[
\mathbf{p}_{0}(\lambda_1,\lambda_2) = (\mathbf{f}_0(\lambda_1,\lambda_2))^r, \quad \mathbf{p}_{0*}(\lambda_1,\lambda_2) = (\mathbf{f}_{0*}(\lambda_1,\lambda_2))^s
\]
for some irreducible polynomials $\mathbf{f}_0, \mathbf{f}_{0*}$ and some $r, s\ge 1$. 
We shall now make the additional assumption (that holds generically) that $r = s =1$. 
This implies that $E_{\opn{in}}$ and $E_{\opn{out}}$ are line bundles over $X$,
so that the directional information of pole and zero data degenerate. 
In other words, under these assumptions, $\opn{LP}( S_{\OpenLoop} )$ and $\opn{LZ}( S_{\OpenLoop} )$ are ordinary effective divisors on $X$.
\end{fact}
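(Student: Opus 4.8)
The plan is to prove the two assertions separately: first that the hypothesis $r=s=1$ forces $E_{\opn{in}}$ and $E_{\opn{out}}$ to have rank one, and then that for rank-one bundles the left pole and zero data lose their directional component and reduce to ordinary effective divisors on $X$. For the rank computation I would read off the rank of $E_{\opn{in}}$ as the dimension of a generic fibre of its lift to $X$. The lift to a holomorphic vector bundle is guaranteed by the earlier fact (citing \cite[Theorem 2.1]{BV2}), and the rank of a vector bundle is its constant fibre dimension, so it suffices to evaluate this dimension at one convenient point. I would choose a smooth point $p\in\mathbf{C}_1$ of the affine curve; there $\mu_p(\mathbf{C}_1)=1$, the normalization $X\to\mathbf{C}_1$ is a local biholomorphism near $p$ with a single preimage, and the fibre of the lifted bundle at that preimage is exactly $E_{\opn{in}}(p)=\ker(\lambda_1\sigma_2-\lambda_2\sigma_1+\gamma)$. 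By the maximality assumption (\ref{fact:maximality}) the inequality (\ref{eqn:ineq}) is an equality, whence $\dim E_{\opn{in}}(p)=\mu_p(\mathbf{C}_1)\cdot r = r = 1$; so $E_{\opn{in}}$ is a line bundle, and the same argument with $s=1$ handles $E_{\opn{out}}$.

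For the second assertion I would work locally. Once both bundles have rank one, $S_{\OpenLoop}$ is a meromorphic section of the line bundle $\opn{Hom}(E_{\opn{in}},E_{\opn{out}})\cong E_{\opn{in}}^*\otimes E_{\opn{out}}$, which near any $z_0\in X$ becomes, after trivializing, multiplication by a single meromorphic germ $s(z)$ in a local coordinate $z$ centered at $z_0$. By (\ref{fact:whereZerosLive}) a direction $\phi$ appearing in a left-zero datum is a germ of a holomorphic section of the dual line bundle $E_{\opn{out}}^*$, hence after trivialization merely a unit $\phi\in\mathcal{O}^{\times}_{z_0}$; the defining relation $\phi(z)s(z)=z^n\psi(z)$ with $\psi\in\mathcal{O}^{\times}_{z_0}$ then forces $n=\opn{ord}_{z_0}s$, and since both $\phi$ and $\psi$ are units this order does not depend on $\phi$. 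I would conclude that the only invariant surviving at each point is the integer $\opn{ord}_{z_0}s$, so that $\opn{LZ}(S_{\OpenLoop})$ collapses to the effective divisor of zeros $\sum_{z_0}\max(\opn{ord}_{z_0}s,0)\,z_0$; applying the same reasoning to $s^{-1}$ shows that $\opn{LP}(S_{\OpenLoop})$ is the effective divisor of poles.

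The hard part will be the rank computation, and specifically the identification of the fibre of the bundle on $X$ with the affine kernel space $E_{\opn{in}}(p)$ at a generic point. This relies on the normalization being an unramified local isomorphism at smooth points together with the full-saturation half of (\ref{fact:maximality}); the transversality-at-infinity condition in (\ref{fact:polArePowers}) enters only to guarantee that the lift is a genuine vector bundle across the points of $X$ lying over $L_\infty$. By contrast, once rank one is established the degeneration of the directional data is essentially formal: a one-dimensional fibre carries a unique line, so every admissible direction $\phi$ is a nonvanishing scalar multiple of any other, and all directional bookkeeping in the definitions of $\opn{LZ}$ and $\opn{LP}$ trivializes.
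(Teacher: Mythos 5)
Your proposal is correct and follows exactly the reasoning the paper leaves implicit (the statement appears as an unproved fact): the maximality assumption (\ref{fact:maximality}) evaluated at a smooth point, where $\mu_p(\mathbf{C}_1)=1$ and the normalization is a local biholomorphism, gives generic fibre dimension $r=1$ (resp.\ $s=1$), hence line bundles; and once the bundles have rank one, every direction $\phi$ is a unit after trivialization, so the defining relation $\phi(z)s(z)=z^n\psi(z)$ pins $n$ to $\opn{ord}_{z_0} s$ and the left zero and pole data collapse to the ordinary zero and pole divisors. No gaps; your side remarks on where full saturation and the transversality-at-infinity hypothesis enter are also consistent with the paper's setup.
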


\begin{fact}\label{fact:linebundleMap}
For any line bundle $E$ on $X$, recall that there is a natural isomorphism 
\[
\opn{Hom}_{\mathcal{O}_X}(E,E) \cong \mathcal{M}_X
\]
between the $\mathbb{C}$-algebra of meromorphic bundle maps $E\to E$ and the $\mathbb{C}$-algebra of meromorphic functions on $X$. 
\end{fact}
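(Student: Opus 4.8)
The plan is to produce the isomorphism explicitly by working in local frames. Since $E$ has rank one, over any open set $U \subseteq X$ on which $E$ admits a nowhere-vanishing holomorphic section (a frame) $e_U$, a meromorphic bundle map $T \colon E \to E$ is determined by its action $T(e_U) = f_U \cdot e_U$ on the frame, where $f_U$ is a meromorphic function on $U$; it is the unique scalar by which $T$ multiplies the one-dimensional fiber, and it is meromorphic precisely because $T$ is. First I would check that the $f_U$ are independent of the chosen frame and hence glue: if $e_U' = g\, e_U$ is another frame, with $g \in \mathcal{O}^{\times}(U)$, then $T(e_U') = g\, T(e_U) = g f_U e_U = f_U e_U'$, so the local scalar does not change. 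Consequently the collection $\{f_U\}$ defines a single global meromorphic function $f \in \mathcal{M}_X$, and this frame-independence is exactly what makes the assignment $T \mapsto f$ natural (canonical, involving no choices).

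Next I would exhibit the inverse. Given $f \in \mathcal{M}_X$, fiberwise multiplication by $f$ defines a meromorphic bundle map $E \to E$ (in any frame it sends $e_U \mapsto f e_U$), and the two constructions are manifestly mutually inverse. To finish, I would verify that the bijection is a $\mathbb{C}$-algebra homomorphism: it is additive and $\mathbb{C}$-linear because addition and scalar multiplication of bundle maps are computed fiberwise, and it is multiplicative because the composition of two bundle maps acts on each one-dimensional fiber as the composition of the two corresponding scalar multiplications, i.e.\ as multiplication by the product of the scalars; the identity bundle map corresponds to the constant function $1$.

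More conceptually, the same statement is the canonical identity $\mathcal{H}om_{\mathcal{O}_X}(E,E) = E^* \otimes_{\mathcal{O}_X} E \cong \mathcal{O}_X$ valid for any line bundle, the trivialization being the evaluation pairing; tensoring with the sheaf $\mathcal{M}_X$ of meromorphic functions and taking global sections then turns meromorphic bundle maps into $H^0(X, \mathcal{M}_X) = \mathcal{M}_X$. I expect no genuine obstacle here: the only point demanding care is confirming that the local scalars are honestly meromorphic and frame-independent, which is precisely the gluing computation above, after which the $\mathbb{C}$-algebra structure follows automatically from the one-dimensionality of the fibers.
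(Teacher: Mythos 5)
Your proof is correct. Note that the paper offers no proof of this statement at all: it is stated in a \texttt{fact} environment as a recalled, standard property of line bundles, so there is no argument of the paper to compare yours against. Your local-frame computation --- the scalar $f_U$ with $T(e_U)=f_U e_U$, its frame-independence via $\mathcal{O}_X$-linearity, gluing to a global $f \in \mathcal{M}_X$, and the check of the $\mathbb{C}$-algebra structure --- is precisely the standard justification the paper implicitly relies on, and your sheaf-theoretic reformulation $\opn{Hom}_{\mathcal{O}_X}(E,E) \cong E^* \otimes_{\mathcal{O}_X} E \cong \mathcal{O}_X$, tensored with $\mathcal{M}_X$, yields the same isomorphism.
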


The above facts allow us to derive more explicit data from Theorem \ref{thm:main} in the line bundle case:

\begin{cor}
Let $\mathcal{B}$ be a minimal operator vessel as above, 
and assume that its input and output bundles are line bundles. 
Let $Z = \opn{LP}(S_{\OpenLoop}) \in \opn{Div}_{\ge 0}(X)$ be the left pole divisor of $S_{\OpenLoop}$.
Then the set of left divisors of closed loop systems with respect
to admissible state feedback operators 
$F:\mathcal{H}\to \mathcal{E}$ given by
\[
\{ \opn{\ell div}(S_{\ClosedLoop}) = \opn{LZ}(S_{\ClosedLoop})-\opn{LP}(S_{\ClosedLoop}) \mid F:\mathcal{H}\to \mathcal{E} \mbox{ is an admissible state feedback}\}
\]
is equal to the set of all divisors of the form 
\[
\opn{\ell div}(S_{\OpenLoop}) + \opn{div}(f) = \opn{LZ}(S_{\OpenLoop})-\opn{LP}(S_{\OpenLoop}) + \opn{div}(f)
\]
where $f \in \mathcal{M}_X$ is any meromorphic function on $X$ which satisfies:
\begin{enumerate}
\item The zero divisor of $f$ is contained in the left pole divisor of $S_{\OpenLoop}$,
that is $\opn{LZ}(f) \le Z$.
\item There is an equality $f_{|_{L_\infty}} = 1$.
\end{enumerate}
\end{cor}
\begin{proof}
Given such a function $f \in \mathcal{M}_X$, 
the isomorphism (\ref{fact:linebundleMap}) provides us 
with a meromorphic bundle map $R:E_{\opn{in}} \to E_{\opn{in}}$ with the same divisor data as $f^{-1}$, 
and with $R_{|_{L_\infty}} =1$. 
In particular, $\opn{LP}(R) = \opn{LZ}(R^{-1}) \subseteq \opn{LP}(S_{\mathcal{B}})$.
Letting $T = S_{\OpenLoop} \circ R^{-1}$, 
we know by Lemma \ref{lem:DivisorDiff} that $\opn{LZ}(T) \subseteq \opn{LZ}(S_{\mathcal{B}})$,
while the assumption on $f$ at infinity implies that $T_{|_{L_\infty}}={S_{\mathcal{B}}}_{|_{L_\infty}}$.
Hence, by Theorem \ref{thm:main},
there exists an admissible state feedback operator
$F:\mathcal{H}\to \mathcal{E}$
such that $S_{\ClosedLoop} = T$,
and there is an equality
\[
\opn{\ell div}(T) = \opn{\ell div}(S_{\OpenLoop}) - \opn{\ell div}(R) = \opn{\ell div}(S_{\OpenLoop}) + \opn{div}(f).
\]
Conversely, given an admissible state feedback operator $F$, 
let $T = S_{\ClosedLoop}$, 
and let $R = T^{-1} \circ S_{\mathcal{B}}$. 
Then using Theorem \ref{thm:main},
one sees that $R : E_{\opn{in}} \to E_{\opn{in}}$,
and that the meromorphic function $f^{-1}$ corresponding to it satisfies $f_{|_{L_\infty}} = 1$,
and Lemma \ref{lem:DivisorDiff} implies that $\opn{LZ}(f) \le Z$.
\end{proof}

It follows that under the assumption that the input and output bundles are line bundles, 
the pole placement problem essentially reduces to a classical interpolation problem over a compact Riemann surface $X$.
If we further concentrate on the case where we only attempt to place poles at points outside of the left zero divisor,
we may reduce the pole placement problem to the following:

\begin{prob}\label{theProb}
Let $\mathbf{C}$ be a projective plane algebraic curve of degree $m$ over $\mathbb{C}$ such that its intersection with the line at infinity contains $m$ different points. Let $X$ be the compact Riemann surface associated to its normalization, and let $Z$ be an effective divisor on $X$. For which effective divisors $P$ on $X$, there is a meromorphic function $f \in \mathcal{M}_X$ such that $\opn{div}(f) = Z - P$, and such that $f_{|_{L_\infty}} = 1$?
\end{prob}

\begin{fact}
As usual in the Riemann-Roch formalism, 
given a divisor $D$ over $X$, 
we denote by $L(D)$ the vector space 
\[
L(D) = \{ f\in \mathcal{M}_X : \opn{div}(f) \ge -D \} \cup \{0\}.
\]
We also set 
\[
\ell(D) = \dim_{\mathbb{C}}\left(L(D)\right).
\]
Denote by $D_{L_\infty}$ the effective divisor of the points of $X$ over the points of $\mathbf{C}$ at infinity.
\end{fact}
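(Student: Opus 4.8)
The final statement is not a theorem but a \emph{definition}: it merely fixes notation for the Riemann--Roch space $L(D)$, its dimension $\ell(D)$, and the divisor $D_{L_\infty}$ of points of $X$ lying over the line at infinity. There is therefore no assertion to prove. The only thing one could reasonably check is that the three objects are well defined, and this follows from standard facts about compact Riemann surfaces; I indicate these routine verifications for completeness.

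To see that $L(D)$ is a $\mathbb{C}$-vector space, the plan is to verify closure under the two operations directly from the definition. For $f,g \in L(D)$ and a scalar $c \in \mathbb{C}$ one has $\opn{div}(cf) = \opn{div}(f) \ge -D$ when $c \ne 0$, while the non-Archimedean property of the order function at each point $x \in X$ gives $\opn{ord}_x(f+g) \ge \min\{\opn{ord}_x(f), \opn{ord}_x(g)\} \ge -D(x)$, so that $\opn{div}(f+g) \ge -D$ whenever $f+g \ne 0$. Together with the convention that $0 \in L(D)$, this presents $L(D)$ as a linear subspace of $\mathcal{M}_X$, and hence $\ell(D) = \dim_{\mathbb{C}} L(D)$ is meaningful. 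Finiteness of $\ell(D)$ is the classical finite-dimensionality theorem for Riemann--Roch spaces on a compact Riemann surface; for effective $D$ it already follows from the elementary bound $\ell(D) \le \deg D + 1$, which I would simply quote from the Riemann--Roch formalism invoked in the statement.

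To see that $D_{L_\infty}$ is a well-defined effective divisor, I would note that $L_\infty \cap \mathbf{C}$ is a finite set, and that the normalization map $\nu : X \to \mathbf{C}$ is finite, so that $\nu^{-1}(L_\infty \cap \mathbf{C})$ is a finite subset of $X$. Assumption (\ref{fact:polArePowers}) guarantees that $\mathbf{C}$ meets $L_\infty$ with intersection number one at each such point; in particular every point of $\mathbf{C}$ at infinity is smooth on $\mathbf{C}$ and has a unique preimage in $X$. Summing these preimages, each with multiplicity one, yields the effective divisor $D_{L_\infty}$, with no choices involved. There is no genuine obstacle here, since the statement is purely definitional; the only point worth flagging is that the role of (\ref{fact:polArePowers}) is exactly to make the support of $D_{L_\infty}$ reduced, so that $D_{L_\infty}$ is the simplest effective divisor supported over the line at infinity.
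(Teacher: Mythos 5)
Correct: this \emph{fact} environment is purely definitional and the paper supplies no proof, so your observation that there is nothing to prove is exactly right. Your routine well-definedness checks (linearity of $L(D)$ via the non-Archimedean order inequality, finite-dimensionality on a compact Riemann surface, and the use of assumption (\ref{fact:polArePowers}) to make $D_{L_\infty}$ a reduced effective divisor) are all sound and consistent with how the paper uses these objects.
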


\begin{fact}\label{fact:parametrize}
Using the Riemann-Roch formalism, 
we may parametrize the functions $f \in \mathcal{M}_X$ that appear in Problem \ref{theProb} as follows:
given $g \in L(Z-D_{L_\infty})$,
by definition we have that $g(x) = 0$ for all $x \in L_{\infty}$,
and the pole divisor of $g$ is contained in $Z$.
Hence, the function $f = \frac{1}{g+1}$ satisfies that 
$f(x) = 1$ for all $x \in L_{\infty}$,
and the zero divisor of $f$ is contained in $Z$.
Conversely, if $f_{|_{L_\infty}} = 1$,
and the zero divisor of $f$ is contained in $Z$,
then $g = \frac{1}{f}-1 \in L(Z-D_{L_\infty})$.
\end{fact}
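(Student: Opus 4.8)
The plan is to verify that the two assignments $g\mapsto \frac{1}{g+1}$ and $f\mapsto \frac1f-1$ are mutually inverse bijections between $L(Z-D_{L_\infty})$ and the set of $f\in\mathcal{M}_X$ with $f_{|_{L_\infty}}=1$ whose zero divisor is contained in $Z$; this is purely a matter of local order bookkeeping on $X$. The one structural input I would isolate first is that $\opn{supp}(Z)$ and $\opn{supp}(D_{L_\infty})$ are disjoint. Indeed $Z=\opn{LZ}(S_{\OpenLoop})$, and $S_{\OpenLoop}$ restricts to the invertible operator $D$ at every point over $L_\infty$, so it has no left zero there; hence no point over $L_\infty$ lies in $\opn{supp}(Z)$. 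Since moreover the transversality hypothesis (\ref{fact:polArePowers}) makes $D_{L_\infty}$ reduced, membership $g\in L(Z-D_{L_\infty})$ unwinds point by point to $\opn{ord}_x(g)\ge (D_{L_\infty}-Z)_x$, which reads as $\opn{ord}_x(g)\ge 1$ for $x$ over $L_\infty$, as $\opn{ord}_x(g)\ge -Z_x$ on $\opn{supp}(Z)$, and as $\opn{ord}_x(g)\ge 0$ elsewhere.

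For the forward direction I would feed these three inequalities into $f=\frac{1}{g+1}$. The first gives $g_{|_{L_\infty}}=0$, so $(g+1)_{|_{L_\infty}}=1$; in particular $g+1\not\equiv 0$, so $f$ is a genuine meromorphic function and $f_{|_{L_\infty}}=1$. The zeros of $f$ are exactly the poles of $g+1$, i.e. the poles of $g$, whose divisor is $\le Z$ by the second inequality; hence the zero divisor of $f$ is contained in $Z$, as required.

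For the converse I start from an $f$ with $f_{|_{L_\infty}}=1$ and zero divisor $\le Z$, and set $g=\frac1f-1$. At a point over $L_\infty$ we have $f=1$, so $g$ vanishes there and $\opn{ord}_x(g)\ge 1$. The only poles of $g$ come from the zeros of $f$ via the term $\frac1f$; at a pole of $f$ that term vanishes and $g$ stays finite, so $g$ acquires no pole there. Thus the pole divisor of $g$ is $\le Z$, and combining with the behaviour at infinity yields $\opn{div}(g)\ge D_{L_\infty}-Z$, i.e. $g\in L(Z-D_{L_\infty})$. A one-line computation, $\frac{1}{g+1}=f$ and $\frac1f-1=g$, shows the two assignments are inverse to one another, completing the parametrization.

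The only place that requires care — and the step I would flag as the real content — is the order count at the points over $L_\infty$: it is precisely the disjointness of $\opn{supp}(Z)$ from $\opn{supp}(D_{L_\infty})$, together with $D_{L_\infty}$ being reduced, that lets the pointwise condition $g(x)=0$ deliver the needed $\opn{ord}_x(g)\ge 1$ rather than something weaker, and that keeps the zero and pole contributions from interfering at infinity. Everything else is formal manipulation of the divisors of $g$, $g+1$, $f$ and $\frac1f$.
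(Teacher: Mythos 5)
Your proof is correct and takes essentially the same route the paper's statement sketches inline: the mutually inverse assignments $g \mapsto \frac{1}{g+1}$ and $f \mapsto \frac{1}{f}-1$, verified by pointwise order bookkeeping on $X$. Your one addition --- isolating that $\opn{supp}(Z)$ is disjoint from the points over $L_\infty$ (because $S_{\mathcal{B}}$ restricts to the invertible operator $D$ there) and that $D_{L_\infty}$ is reduced by the transversality assumption (\ref{fact:polArePowers}) --- is precisely the detail hidden in the paper's phrase ``by definition we have that $g(x)=0$'', and it is needed for the membership condition $g \in L(Z-D_{L_\infty})$ to unwind as claimed.
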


Our minimality assumption on the vessel $\mathcal{B}$ implies that $\deg(Z) = \dim_{\mathbb{C}}(\mathcal{H})$.
This is the content of the next lemma.

\begin{lem}
Let $\mathcal{B}$ be a minimal operator vessel as above, 
and assume that its input and output bundles are line bundles. 
Then the degree of its left pole divisor $Z$ satisfies
$\deg(Z) = \dim_{\mathbb{C}}(\mathcal{H})$.
\end{lem}
\begin{proof}
It is clear that $\deg Z \leq \dim_{{\mathbb C}}{\mathcal H}$.
To see the converse,
let $(\lambda_1^0,\lambda_2^0)$ be in the joint spectrum of $A_1$ and $A_2$.
If $v \in {\mathcal H}$, $v \neq 0$, is a joint eigenvector:
$A_1 \cdot v = \lambda_1^0 \cdot v$ and $A_2 \cdot v = \lambda_2^0 \cdot v$,
then $C \cdot v \neq 0$ 
(this is analogous to the classical Hautus test and follows from the observability of $\mathcal{B}$)
and also $C \cdot v \in E_{\text{out}}(\lambda_1^0,\lambda_2^0)$
because of vessel condition (A3) (see (\ref{eqn:vesselCond})).
Since $\dim E_{\text{out}}(\lambda_1^0,\lambda_2^0) = 1$,
it follows that the joint eigenspace is one-dimensional.
Therefore, 
$\xi_1 A_1 + \xi_2 A_2$ has a one-dimensional eigenspace 
with eigenvalue $\xi_1 \lambda_1^0 + \xi_2 \lambda_2^0$ for all 
but finitely many directions $(\xi_1,\xi_2)$, 
and thus a single Jordan cell of a certain size $n_0$ with this eigenvalue.
By \cite[Theorem 4.2.1]{BGR}, the restricted transfer function $S_\xi$
has a pole of order $n_0$ at $\xi_1 \lambda_1^0 + \xi_2 \lambda_2^0$,
and using the restoration formula as in the proof of Lemma \ref{lem:directRegular},
we conclude that $S_{\mathcal{B}}$ has a pole of order at least $n_0$ at $(\lambda_1^0,\lambda_2^0)$.
Since the sum of the sizes of Jordan cells corresponding to all the eigenvalues 
equals $\dim_{{\mathbb C}}{\mathcal H}$, we conclude that $\deg Z \geq \dim_{{\mathbb C}}{\mathcal H}$.
\end{proof}

\begin{fact}
Following this lemma, 
let us denote this number, the dimension of the state space, by $n$.
In the line bundle case we consider in this section,
we have that $\deg(D_{L_{\infty}}) = \dim_{\mathbb{C}}(\mathcal{E})$,
the dimension of the input space. Let us denote it by $m$.
Then $\deg(Z-D_{L_\infty}) = n-m$.
\end{fact}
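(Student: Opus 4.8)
The statement packages three assertions: $\deg(Z)=\dim_{\mathbb C}(\mathcal H)=:n$, $\deg(D_{L_\infty})=\dim_{\mathbb C}(\mathcal E)=:m$, and $\deg(Z-D_{L_\infty})=n-m$. The plan is to prove the two degree computations separately; the third then follows at once from additivity of degree on divisors, $\deg(Z-D_{L_\infty})=\deg(Z)-\deg(D_{L_\infty})=n-m$. The first count will use the minimal-realization machinery already assembled, and the second the geometry of the determinantal curve at infinity.

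For $\deg(D_{L_\infty})=m$ I would argue geometrically. The pencil $\lambda_1\sigma_2-\lambda_2\sigma_1+\gamma$ is an $m\times m$ matrix with $m=\dim\mathcal E=\dim\widetilde{\mathcal E}$, so $p_{\opn{in}}=\det(\lambda_1\sigma_2-\lambda_2\sigma_1+\gamma)$ has total degree at most $m$ in $(\lambda_1,\lambda_2)$, and its degree-$m$ part is $\det(\lambda_1\sigma_2-\lambda_2\sigma_1)$, which is not identically zero because of the existence of a regular direction (\ref{fact:regular}): taking $(\lambda_1,\lambda_2)=(\xi_2,-\xi_1)$ recovers the invertible operator $\sigma_\xi=\xi_1\sigma_1+\xi_2\sigma_2$. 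Since $r=1$ forces $p_{\opn{in}}=p_1$, the curve $\mathbf C$ has degree exactly $m$. By \ref{fact:polArePowers} (and as recorded in Problem \ref{theProb}) $\mathbf C$ meets $L_\infty$ in $m$ distinct points, each with intersection number $1$; transversality forces each such point to be smooth on $\mathbf C$, since a singular point would meet any line through it with multiplicity $\ge\mu_p(\mathbf C)\ge 2$. Hence each point has a single preimage in the normalization $X$, so $D_{L_\infty}$ is the reduced sum of $m$ points and $\deg(D_{L_\infty})=m=\dim\mathcal E$.

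For $\deg(Z)=n$ I would pass to a restricted transfer function. Fix a regular direction $\xi$; by Proposition \ref{prop:MinimalRealization} the realization $S_\xi(\lambda)=D+C(\lambda I-A_\xi)^{-1}B_\xi$ is minimal with $A_\xi$ of size $n\times n$, so its McMillan degree — the total pole multiplicity — is $n$. Because $D$ is invertible, $S_\xi^{-1}$ has the realization $D^{-1}-D^{-1}C(\lambda I-(A_\xi-B_\xi D^{-1}C))^{-1}B_\xi D^{-1}$, again of state size $n$ and again minimal, so the total zero multiplicity of $S_\xi$ equals $n$ as well (cf. \cite[Theorem 4.2.1]{BGR}); note there are no zeros or poles at $\infty$ since $S_\xi(\infty)=D$ is invertible. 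It remains to match this count with the global left-zero divisor on $X$. The projection $\pi:X\to\mathbb P^1$ sending a point over $(\lambda_1,\lambda_2)$ to $\lambda_\xi=\xi_1\lambda_1+\xi_2\lambda_2$ is a finite morphism of smooth projective curves, and pushforward of divisors along such a map preserves degree, so it suffices to show $\pi_*\opn{LZ}(S)$ equals the zero divisor of $S_\xi$, giving $\deg(Z)=\deg\pi_*\opn{LZ}(S)=n$.

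The hard part will be exactly this last identification. In the line-bundle case the fibre $\pi^{-1}(\lambda_0)$ consists of generically $m$ points whose one-dimensional input eigenspaces furnish a $\lambda$-varying eigenframe in which $S_\xi$ is diagonal, the $i$-th entry being the scalar value of the bundle map $S$ along the corresponding sheet. Granting this simultaneous-diagonalization structure — which is the content of the determinantal/spectral theory of minimal vessels underlying \cite{BV1,BV2,LKMV} — the zeros of $S_\xi$ are the disjoint union, sheet by sheet, of the local zeros of $S$ with matching orders, so the pushforward identity holds and the count goes through. Combining the three parts then yields $\deg(Z-D_{L_\infty})=n-m$.
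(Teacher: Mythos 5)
The paper never proves this statement --- it is a ``fact'' environment, recording background that the authors draw from the Ball--Vinnikov theory of minimal vessels --- so there is no in-paper argument to compare against; your proposal supplies one, and its skeleton is sound. Your computation of $\deg(D_{L_\infty})=m$ is complete and correct: regularity (\ref{fact:regular}) makes the top-degree form $\det(\lambda_1\sigma_2-\lambda_2\sigma_1)$ nonzero (your evaluation at $(\xi_2,-\xi_1)$ is exactly right), so $\deg\mathbf{C}=m$; B\'ezout plus the transversality assumption (\ref{fact:polArePowers}) then gives $m$ distinct points at infinity, each smooth because the intersection number with a line dominates $\mu_p(\mathbf{C})$, hence each with a unique preimage in $X$. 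Likewise, reducing $\deg(Z)=n$ to a one-variable count via Proposition \ref{prop:MinimalRealization}, the inverse realization with state matrix $A_\xi-B_\xi D^{-1}C$, and invertibility of $D$ is the natural route and is correctly executed.

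The soft spot is exactly where you place it: the identity $\pi_*\opn{LZ}(S_{\mathcal{B}})=\opn{LZ}(S_\xi)$. Your justification --- diagonalizing $S_\xi(\lambda_0)$ in the eigenframe $\mathcal{E}=\bigoplus_i E_{\opn{in}}(p_i)$ --- is available only at fibers where the $m$ points are distinct and the kernels span, i.e.\ at \emph{generic} $\lambda_0$. But an equality of divisors is a statement about finitely many \emph{special} points, so generic agreement by itself proves nothing: the zeros of $S_{\mathcal{B}}$ and of $S_\xi$ may a priori sit on branched or degenerate fibers. To close the argument one needs (i) to choose the regular direction $\xi$ generically with respect to the (finite, $\xi$-independent) zero and pole sets of $S_{\mathcal{B}}$ on $X$, so that every such point lies on an unbranched, spanning fiber, where your diagonalization does give order-by-order matching; and (ii) to know that $S_\xi$ has no zeros over any \emph{other} fiber, which follows because its zeros lie in $\opn{spec}(A_\xi-B_\xi D^{-1}C)=\opn{spec}(\xi_1 A_1^\times+\xi_2 A_2^\times)$, where $A_i^\times=A_i-\widetilde{B}\sigma_i D^{-1}C$ are the commuting main operators of the inverse vessel (the object behind \cite[Theorem 1.16]{BV1}, which this paper already invokes in Proposition \ref{prop:Factorization}), and the joint spectrum of that pair sits on $\mathbf{C}$ exactly under the points where $S_{\mathcal{B}}$ degenerates. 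Both ingredients are part of the zero-pole correspondence on curves established in \cite{BV2}, so your citation is legitimate --- and parallels the paper's own implicit reliance on that theory --- but as written, ``granting the simultaneous-diagonalization structure'' asserts the matching precisely at the points where it is hardest, rather than deriving it from the generic structure you actually establish.
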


\begin{fact}
Let $K$ be a canonical divisor of $X$. Thus, $K$ is the divisor of some meromorphic 1-form on $X$. Denote by $g$ the genus of $X$.
Applying the Riemann-Roch theorem to the divisor $Z - D_{L_{\infty}}$ implies that
\[
\ell(Z-D_{L_\infty}) = \ell(K-Z+D_{L_\infty}) + \deg(Z-D_{L_\infty}) - g +1 = \ell(K-Z+D_{L_\infty}) + n - m -g +1.
\]
Let us denote this number by $\fdim(\mathcal{B}) \in \mathbb{N}$.
We call this number the \textit{feedback dimension} of $\mathcal{B}$.
The above equality implies that
\[
\fdim(\mathcal{B}) \ge n -m -g+1 = \dim(\mathcal{H}) - \dim(\mathcal{E}) -g + 1.
\]
\end{fact}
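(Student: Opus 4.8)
The plan is to apply the Riemann--Roch theorem directly to the divisor $Z - D_{L_\infty}$. Recall that for any divisor $D$ on a compact Riemann surface $X$ of genus $g$ with canonical divisor $K$, the Riemann--Roch theorem asserts the equality
\[
\ell(D) - \ell(K - D) = \deg(D) - g + 1.
\]
First I would set $D = Z - D_{L_\infty}$ and substitute into this formula, obtaining
\[
\ell(Z - D_{L_\infty}) - \ell(K - Z + D_{L_\infty}) = \deg(Z - D_{L_\infty}) - g + 1,
\]
which upon rearrangement gives the first displayed equality of the statement.

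Next I would invoke the degree computation established in the preceding fact, namely $\deg(Z - D_{L_\infty}) = n - m$, where $n = \dim_{\mathbb{C}}(\mathcal{H})$ and $m = \dim_{\mathbb{C}}(\mathcal{E})$. Substituting this into the previous display yields
\[
\ell(Z - D_{L_\infty}) = \ell(K - Z + D_{L_\infty}) + n - m - g + 1,
\]
which is exactly the second equality, and simultaneously confirms that $\fdim(\mathcal{B})$ is a well-defined nonnegative integer, being the dimension of the finite-dimensional vector space $L(Z - D_{L_\infty})$.

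Finally, for the asserted inequality I would simply observe that $\ell(K - Z + D_{L_\infty}) = \dim_{\mathbb{C}} L(K - Z + D_{L_\infty}) \ge 0$, as it is the dimension of a vector space. Discarding this nonnegative term from the equality above gives
\[
\fdim(\mathcal{B}) = \ell(Z - D_{L_\infty}) \ge n - m - g + 1 = \dim(\mathcal{H}) - \dim(\mathcal{E}) - g + 1,
\]
as required. There is no genuine obstacle here: the whole statement is a formal consequence of Riemann--Roch together with the degree bookkeeping already carried out in the previous facts, and the only point worth a remark is that it is precisely the nonnegativity of $\ell$ that converts the exact Riemann--Roch identity into the stated lower bound on the feedback dimension.
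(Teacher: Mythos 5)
Your proof is correct and follows exactly the paper's (implicit) argument: apply Riemann--Roch to $Z - D_{L_\infty}$, substitute $\deg(Z-D_{L_\infty}) = n-m$ from the preceding fact, and obtain the inequality from the nonnegativity of $\ell(K-Z+D_{L_\infty})$. Nothing further is needed.
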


\begin{fact}
Let $r = \fdim(\mathcal{B})$, 
and let $f_1,\dots,f_r$ be some basis of the vector space $L(Z-D_{L_\infty})$.
Denote by $X_0$ the non-compact Riemann surface $X \setminus L_{\infty}$,
obtained from $X$ by deleting all the points that lie over the line at infinity of $\mathbf{C}$.
Consider the complex manifold 
\[
\mathcal{Y} \subseteq \underbrace{X_0\times X_0 \times \dots \times X_0}_{\mbox{$r$ times}}
\]
given by all the $r$-tuples $\mathbf{p} = (p_1,\dots,p_r)$,
such that for $1 \le i < j \le r$, we have that $p_i \ne p_j$.
We define functions $\mathcal{M}:\mathcal{Y} \to M_r(\mathbb{C})$ 
and $\mathcal{P}:\mathcal{Y} \to \mathbb{C}$ as follows:
for each $\mathbf{p} = (p_1,\dots,p_r) \in \mathcal{Y}$,
define a square matrix $\mathcal{M}({\mathbf{p}}) \in M_r(\mathbb{C})$ by
$\mathcal{M}({\mathbf{p}}) = (b_{i,j})$, where $b_{i,j} = f_i(p_j)$,
and let $\mathcal{P}(\mathbf{p}) = \det(\mathcal{M}(\mathbf{p}))$.
\end{fact}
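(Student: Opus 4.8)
The statement is essentially definitional, so the plan is to verify that the objects it introduces are well defined and then to isolate the one nontrivial property---that $\mathcal{P}$ does not vanish identically---which is what makes it useful for Problem \ref{theProb}. First I would record that $\mathcal{Y}$ is a complex manifold: since $X$ is a compact Riemann surface and we delete the finite set of points over $L_\infty$, the space $X_0$ is an open Riemann surface, the $r$-fold product $X_0 \times \cdots \times X_0$ is a complex manifold of dimension $r$, and $\mathcal{Y}$ is its complement of the closed ``big diagonal'' $\bigcup_{i<j}\{p_i = p_j\}$, hence an open submanifold.

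Next I would address the well-definedness and regularity of $\mathcal{M}$ and $\mathcal{P}$. Each basis element $f_i \in L(Z - D_{L_\infty})$ is a meromorphic function on $X$ whose poles are confined to the finite support of $Z$, which lies in $X_0$ because $S_{\OpenLoop}$ equals the invertible operator $D$ over $L_\infty$ and so has no zeros there. Hence the entries $b_{i,j} = f_i(p_j)$ are honestly defined, and holomorphic in $\mathbf{p}$, on the open dense subset $\mathcal{Y}^\circ \subseteq \mathcal{Y}$ on which no coordinate $p_j$ meets $\opn{supp}(Z)$; there $\mathcal{M}$ is a holomorphic $M_r(\mathbb{C})$-valued map and $\mathcal{P} = \det \mathcal{M}$ is holomorphic, while on all of $\mathcal{Y}$ they are meromorphic.

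The only substantive point is that $\mathcal{P} \not\equiv 0$, and here I would use that $f_1,\dots,f_r$ form a basis, hence are linearly independent over $\mathbb{C}$. I would prove by induction on $r$ that linearly independent functions admit an invertible evaluation matrix: given points $p_1,\dots,p_{r-1}$ for which the principal $(r-1)\times(r-1)$ minor is nonzero, cofactor expansion of the $r\times r$ determinant along the last column exhibits the function $p \mapsto \mathcal{P}(p_1,\dots,p_{r-1},p)$ as a $\mathbb{C}$-linear combination of the $f_i$ whose coefficient of $f_r$ is exactly that nonzero minor; linear independence makes this a nonzero meromorphic function, so it is nonvanishing at some $p_r \in X_0 \setminus \opn{supp}(Z)$ distinct from the others, producing a point of $\mathcal{Y}^\circ$ with $\mathcal{P} \neq 0$. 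Since $\mathcal{Y}^\circ$ is connected, $\{\mathcal{P} = 0\}$ is then a proper analytic subset, so $\mathcal{P} \neq 0$ for generic $\mathbf{p}$.

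Finally I would spell out why $\mathcal{P}$ is the right object, tying it to Fact \ref{fact:parametrize}: writing $g = \sum_{i=1}^r c_i f_i$, the demand that $g+1$ vanish at the prescribed points $p_1,\dots,p_r$ is precisely the linear system $\mathcal{M}(\mathbf{p})^{\mathsf T} c = -\mathbf{1}$, which is uniquely solvable exactly when $\mathcal{P}(\mathbf{p}) \neq 0$; the resulting $f = 1/(g+1)$ is then the function solving Problem \ref{theProb} whose pole set passes through $\mathbf{p}$. The main obstacle is the nonvanishing $\mathcal{P} \not\equiv 0$---equivalently, ruling out a universally singular evaluation matrix---which is the only place where the linear independence of the basis is genuinely needed; keeping the evaluations away from the poles of the $f_i$ is routine bookkeeping.
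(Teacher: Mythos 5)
Your proposal is correct and takes essentially the same route as the paper: the statement itself is definitional, and its one substantive claim---that $\mathcal{P}$ is meromorphic and not identically zero---is established in the paper's immediately following lemma by exactly your argument, namely induction on $r$ via Laplace expansion of the determinant along the last column, with the linear independence of the basis $f_1,\dots,f_r$ forcing the resulting nontrivial combination $\sum_i a_i f_i$ to be nonzero at some point $p_r \in X_0$. Your additional bookkeeping (openness of $\mathcal{Y}$ as the complement of the big diagonal, keeping evaluation points away from $\operatorname{supp}(Z)$, and the link to the uniquely solvable linear system $\mathcal{M}(\mathbf{p})^{\mathsf{T}}c = -\mathbf{1}$) corresponds to what the paper records in the surrounding facts rather than in the lemma's proof, but introduces no divergence in method.
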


\begin{lem}
The function $\mathcal{P}:\mathcal{Y} \to \mathbb{C}$ is a meromorphic function.
Moreover, assuming that $r = \fdim(\mathcal{B}) > 0$, it is not identically zero.
\end{lem}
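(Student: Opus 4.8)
The plan is to prove the two assertions separately: first that $\mathcal{P}$ is meromorphic on $\mathcal{Y}$, and then that it is not identically zero when $r > 0$. The first part should be essentially formal. Each $f_i \in L(Z - D_{L_\infty}) \subseteq \mathcal{M}_X$ is a meromorphic function on $X$, hence its pullback to each factor $X_0$ of the product $\mathcal{Y} \subseteq X_0 \times \cdots \times X_0$ is meromorphic in the corresponding coordinate. The entries $b_{i,j} = f_i(p_j)$ of $\mathcal{M}(\mathbf{p})$ are therefore meromorphic functions on $\mathcal{Y}$ (each depending only on the single coordinate $p_j$), and $\mathcal{P}(\mathbf{p}) = \det(\mathcal{M}(\mathbf{p}))$ is a polynomial in these entries. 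Since a polynomial expression in meromorphic functions on a complex manifold is again meromorphic, $\mathcal{P}$ is meromorphic on $\mathcal{Y}$.

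\emph{The second part is the substantive one.} Here I would argue by contradiction, or rather by a direct induction on $r$, that $\mathcal{P}$ does not vanish identically. The key observation is that $f_1,\dots,f_r$ form a \emph{basis} of $L(Z - D_{L_\infty})$, so in particular they are linearly independent over $\mathbb{C}$. The matrix $\mathcal{M}(\mathbf{p})$ is exactly the ``sampling matrix'' whose columns record the values of the $f_i$ at the $r$ distinct points $p_1,\dots,p_r$; its determinant vanishing identically would say that for \emph{every} choice of distinct points, the sampled value-vectors are linearly dependent. The plan is to show this cannot happen for linearly independent meromorphic functions.

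\emph{The cleanest route} is induction on $r$. For $r = 1$ we have $\mathcal{P}(p_1) = f_1(p_1)$, and since $f_1$ is a nonzero element of the basis it is not identically zero as a function on $X_0$, so $\mathcal{P} \not\equiv 0$. For the inductive step, suppose the claim holds for $r - 1$ functions. By the induction hypothesis applied to $f_1,\dots,f_{r-1}$, there exist distinct points $q_1,\dots,q_{r-1} \in X_0$ with $\det(f_i(q_j))_{1 \le i,j \le r-1} \ne 0$; fix these as the first $r-1$ coordinates. Then, expanding $\det \mathcal{M}(\mathbf{p})$ along the last column as a function of the remaining variable $p_r$, we obtain a $\mathbb{C}$-linear combination $\sum_{i=1}^{r} c_i\, f_i(p_r)$, where the cofactor $c_r = \det(f_i(q_j))_{1 \le i,j \le r-1} \ne 0$ by choice of the $q_j$. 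This linear combination is a nonzero element of $L(Z - D_{L_\infty})$ (nonzero because the $f_i$ are linearly independent and the coefficient $c_r$ is nonzero), hence it is a meromorphic function on $X_0$ that does not vanish identically. Therefore we may pick $p_r = q_r$ distinct from $q_1,\dots,q_{r-1}$ at which this function is nonzero, giving a point $\mathbf{q} = (q_1,\dots,q_r) \in \mathcal{Y}$ with $\mathcal{P}(\mathbf{q}) \ne 0$.

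\emph{The main obstacle} I anticipate is the bookkeeping around the open dense conditions: I must ensure the point $p_r$ can be chosen \emph{both} distinct from the previously fixed $q_1,\dots,q_{r-1}$ \emph{and} avoiding the (discrete) zero set of the nonzero meromorphic function $\sum_i c_i f_i$, so that the resulting tuple genuinely lies in $\mathcal{Y}$ and has nonvanishing $\mathcal{P}$. Since a nonzero meromorphic function on the connected Riemann surface $X_0$ has only isolated zeros, and we are removing finitely many further points, such a choice of $p_r$ exists; this is where connectedness of $X$ (and hence of $X_0$, which is $X$ minus finitely many points) is used. A minor point to state carefully is that the assumption $r = \fdim(\mathcal{B}) > 0$ guarantees the basis $f_1,\dots,f_r$ is nonempty, which is what allows the induction to start.
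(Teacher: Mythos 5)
Your proof is correct and takes essentially the same approach as the paper: meromorphy of $\mathcal{P}$ follows formally from meromorphy of the entries, and non-vanishing is proved by induction on $r$ via Laplace expansion along the last column, using a nonvanishing cofactor supplied by the inductive hypothesis together with the linear independence of $f_1,\dots,f_r$ to choose a final point $p_r$ where the expansion is nonzero. The only cosmetic difference is that you apply the inductive hypothesis to the specific subfamily $f_1,\dots,f_{r-1}$ (getting the cofactor $c_r \ne 0$), while the paper assumes all minors are not identically zero and picks a point where $a_1 \ne 0$; both are instances of the same argument.
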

\begin{proof}
Since each of $f_1,\dots, f_r$ is a meromorphic function $X_0 \to \mathbb{C}$,
we see that $\mathcal{M}:\mathcal{Y} \to M_r(\mathbb{C})$ is also a meromorphic function,
and hence $\mathcal{P}$ is also a meromorphic function. 
Assume now that $r > 0$. 
Clearly, if $r = 1$ the claim holds.
Let us assume by induction that each minor of $\mathcal{M}$ is not identically zero.
Writing $\mathcal{P}$ as the Laplace expansion of $\mathcal{M}$ along the last column,
we may write
\[
\mathcal{P}(p_1,\dots,p_r) = \sum_{i=1}^r a_i(p_1,\dots,p_{r-1}) \cdot f_i(p_r)
\]
where each $a_i$ is a meromorphic function
\[
a_i:\underbrace{X_0\times X_0 \times \dots \times X_0}_{\mbox{$r-1$ times}} \to \mathbb{C}
\]
which is not identically zero. 
Let us choose some 
\[
(p_1,\dots,p_{r-1}) \in \underbrace{X_0\times X_0 \times \dots \times X_0}_{\mbox{$r-1$ times}}
\]
where for $1 \le i < j \le r-1$, we have that $p_i \ne p_j$,
such that $a_1(p_1,\dots,p_{r-1}) \ne 0$.
Then the fact that $f_1, \dots, f_r$ are linearly independent implies that for infinitely many $p \in X_0$, it holds that
\[
\sum_{i=1}^r a_i(p_1,\dots,p_{r-1}) \cdot f_i(p) \ne 0.
\]
Hence, the $r$-tuple $(p_1,\dots,p_{r-1},p)$ satisfies
$\mathcal{P}(p_1,\dots,p_{r-1},p) \ne 0$, as claimed.
\end{proof}

\begin{fact}\label{fact:NF}
Let us set
\[
\mathcal{NF} = \{(p_1,\dots,p_r) \in \mathcal{Y} \mid \mathcal{P}(p_1,\dots,p_r) = 0\}
\]
This set, the No-Feedback set, is a codimension $1$ subset of the $r$-dimensional complex manifold $\mathcal{Y}$.
In particular it is of measure $0$.
Note that this set is independent of the chosen basis $f_1,\dots,f_r$ of the vector space $L(Z-D_{L_\infty})$.
Note that $(p_1,\dots,p_r) \notin \mathcal{NF}$ if and only if
given $f \in L(Z-D_{L_\infty})$ such that $f(p_1) = f(p_2) = \dots = f(p_r) = 0$ it holds that $f \equiv 0$.
Hence, $(p_1,\dots,p_r) \notin \mathcal{NF}$ if and only if
\begin{equation}\label{eqn:RRofNF}
\ell(Z-D_{L_\infty}-\sum_{i=1}^r p_i) = 0.
\end{equation}
\end{fact}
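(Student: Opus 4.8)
The plan is to treat separately the several assertions bundled into this statement, since they are of different natures; throughout I assume $r = \fdim(\mathcal{B}) > 0$ (the case $r = 0$ is vacuous, as then $\mathcal{Y}$ is a single point and $\mathcal{NF} = \emptyset$). The claim that $\mathcal{NF}$ is a codimension-one subset, hence of measure zero, is immediate from the preceding lemma: there we established that $\mathcal{P}$ is a meromorphic function on the connected $r$-dimensional complex manifold $\mathcal{Y}$ which, since $r > 0$, is not identically zero. The zero locus of a non-zero meromorphic function on a connected complex manifold is a proper analytic subset of pure codimension one — locally $\mathcal{P} = g/h$ with $g,h$ holomorphic and $g \not\equiv 0$, and $\{\mathcal{P} = 0\} \subseteq \{g = 0\}$ — and such a set has real dimension $2r - 2$, hence Lebesgue measure zero in $\mathcal{Y}$.

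For the independence of $\mathcal{NF}$ from the choice of basis, I would note that if $f_1',\dots,f_r'$ is a second basis of $L(Z - D_{L_\infty})$, with $f_i' = \sum_k c_{ik} f_k$ for an invertible matrix $C = (c_{ik})$, then the associated matrix satisfies $\mathcal{M}'(\mathbf{p}) = C\,\mathcal{M}(\mathbf{p})$, whence $\mathcal{P}'(\mathbf{p}) = \det(C)\,\mathcal{P}(\mathbf{p})$. Since $\det(C) \ne 0$, the two functions have the same zero locus, so $\mathcal{NF}$ is intrinsic to $\mathcal{B}$.

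The heart of the matter is the twofold characterization of the complement of $\mathcal{NF}$, which is essentially linear algebra. Writing a general element of $L(Z - D_{L_\infty})$ as $f = \sum_i a_i f_i$, its values are $f(p_j) = \sum_i a_i f_i(p_j) = (a^{\mathsf{T}}\mathcal{M}(\mathbf{p}))_j$ with $a = (a_1,\dots,a_r)^{\mathsf{T}}$. Thus the conditions $f(p_1) = \dots = f(p_r) = 0$ say exactly that $a$ lies in the left kernel of $\mathcal{M}(\mathbf{p})$, which admits a non-zero solution precisely when $\mathcal{P}(\mathbf{p}) = \det \mathcal{M}(\mathbf{p}) = 0$, i.e. when $\mathbf{p} \in \mathcal{NF}$. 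The contrapositive gives the first stated equivalence. For the second, since the $p_j$ are distinct and (on the locus where evaluation is defined) are not poles of the $f_i$, imposing $f(p_j) = 0$ for all $j$ amounts to $\opn{div}(f) \ge -(Z - D_{L_\infty}) + \sum_j p_j$, that is $f \in L(Z - D_{L_\infty} - \sum_j p_j)$; hence the non-existence of a non-zero such $f$ is exactly $\ell(Z - D_{L_\infty} - \sum_{i=1}^r p_i) = 0$.

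The main technical point to be careful about — the only place the argument is not pure linear algebra — is the interplay between the value $f(p_j)$ and the Riemann--Roch space, since elements of $L(Z - D_{L_\infty})$ may have poles along $\opn{supp}(Z)$, so that naive evaluation $f(p_j)$, and $\mathcal{P}$ itself, are only defined off this locus. The clean way to remove this nuisance is to pass to the line bundle $\mathcal{L} = \mathcal{O}(Z - D_{L_\infty})$, identify $L(Z - D_{L_\infty})$ with $H^0(X,\mathcal{L})$, and read $\mathcal{M}(\mathbf{p})$ as the matrix, in local trivializations, of the evaluation map $H^0(X,\mathcal{L}) \to \bigoplus_j \mathcal{L}_{p_j}$; a change of trivialization rescales $\mathcal{P}$ by a non-vanishing factor, consistently with the basis-independence above. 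The determinant then vanishes exactly when this evaluation map fails to be injective, which is $\ell(Z - D_{L_\infty} - \sum_i p_i) > 0$, and the equivalences hold verbatim on the full-measure open locus where the relevant trivializations and evaluations are defined — which is all that is needed, since $\mathcal{NF}$ is in any case being located only up to measure zero.
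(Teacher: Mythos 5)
Your proposal is correct and follows essentially the same route as the paper: the codimension-one and measure-zero claims come from the preceding lemma (nonvanishing of the meromorphic function $\mathcal{P}$), basis independence from $\mathcal{P}' = \det(C)\,\mathcal{P}$, and the two characterizations of the complement of $\mathcal{NF}$ from the linear-algebra identification of $\{f = \sum_i a_i f_i : f(p_j)=0 \ \forall j\}$ with the left kernel of $\mathcal{M}(\mathbf{p})$, together with the observation that vanishing at the $p_j$ is exactly membership in $L(Z-D_{L_\infty}-\sum_j p_j)$. Your final paragraph, interpreting $\mathcal{M}(\mathbf{p})$ as the evaluation map $H^0(X,\mathcal{O}(Z-D_{L_\infty})) \to \bigoplus_j \mathcal{O}(Z-D_{L_\infty})_{p_j}$ in local trivializations, is a careful refinement of a point the paper silently glosses over (evaluation and $\mathcal{P}$ are a priori undefined at points of $\opn{supp}(Z)$), and it correctly confirms that the equivalences hold in the divisor-theoretic sense there as well.
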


\begin{fact}
Given an $r$-tuple $(p_1,\dots,p_r) \notin \mathcal{NF}$,
the fact that $\mathcal{P}(p_1,\dots,p_r) \ne 0$,
implies that there are $a_1,\dots,a_r \in \mathbb{C}$ such that
\begin{equation}\label{eqn:alleql1}
\sum_{i=1}^r a_i\cdot f_i(p_j) = -1
\end{equation}
for all $1 \le j \le r$.
Hence, letting 
\[
g = \sum_{i=1}^r a_i\cdot f_i \in L(Z-D_{L_\infty}),
\]
and applying the construction of (\ref{fact:parametrize}) to $g$,
we obtain that $f = \frac{1}{g+1}$ is a meromorphic function on $X$,
which satisfies $f(x) = 1$ for all $x \in L_{\infty}$,
its zero divisor is contained in $Z$,
and $f$ has a pole in each of the points $p_1,\dots,p_r$.
Further, note that since for $(p_1,\dots,p_r) \notin \mathcal{NF}$ the matrix $\mathcal{M}(p_1,\dots,p_r)$ is invertible, 
the function $g$ from (\ref{eqn:alleql1}) is unique.
Hence, there is a unique meromorphic function $f$ on $X$
which satisfies $f(x) = 1$ for all $x \in L_{\infty}$,
its zero divisor is contained in $Z$,
and $f$ has a pole in each of the points $p_1,\dots,p_r$.
\end{fact}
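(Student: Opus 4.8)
The plan is to reduce the existence-and-uniqueness of $f$ to a single square linear system and read off its solvability from the hypothesis $(p_1,\dots,p_r)\notin\mathcal{NF}$. The correspondence of (\ref{fact:parametrize}), sending $g\mapsto f=\frac{1}{g+1}$ with inverse $f\mapsto g=\frac1f-1$, is a bijection between functions $g\in L(Z-D_{L_\infty})$ and meromorphic functions $f$ on $X$ whose restriction to $L_\infty$ is $1$ and whose zero divisor is contained in $Z$. Under this bijection the one remaining requirement — that $f$ have a pole at each $p_j$ — translates into the interpolation conditions $g(p_j)=-1$ for $1\le j\le r$, since $f$ acquires a pole at $p_j$ exactly when its denominator $g+1$ vanishes there. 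Thus it suffices to produce a unique $g\in L(Z-D_{L_\infty})$ with $g(p_j)=-1$ for every $j$.

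First I would fix the basis $f_1,\dots,f_r$ of $L(Z-D_{L_\infty})$ and write $g=\sum_{i=1}^r a_i f_i$. The conditions $g(p_j)=-1$ become the square linear system $\sum_{i=1}^r a_i f_i(p_j)=-1$ for $1\le j\le r$, whose coefficient matrix is the transpose of $\mathcal{M}(\mathbf{p})=(f_i(p_j))$. Since $(p_1,\dots,p_r)\notin\mathcal{NF}$ means precisely that $\mathcal{P}(\mathbf{p})=\det\mathcal{M}(\mathbf{p})\ne 0$, this matrix is invertible, so the system has one and only one solution $(a_1,\dots,a_r)$. This yields simultaneously the existence and the uniqueness of $g$, hence of the candidate $f=\frac{1}{g+1}$.

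It then remains to confirm that the $f$ so produced genuinely has the three advertised properties, which is where the care lies. Feeding $g$ into (\ref{fact:parametrize}): because $g\in L(Z-D_{L_\infty})$ vanishes along $L_\infty$ we obtain $f_{|_{L_\infty}}=1$; because the pole divisor of $g$ is dominated by $Z$ the zero divisor of $f$ is contained in $Z$; and because $g(p_j)=-1$ the denominator $g+1$ vanishes at each $p_j$, forcing a pole of $f$ there. For uniqueness I would run the correspondence backwards: any $f$ meeting the three conditions gives $g=\frac1f-1\in L(Z-D_{L_\infty})$ with $g(p_j)=\frac{1}{f(p_j)}-1=-1$, using that $\frac1f$ vanishes at a pole of $f$; this $g$ solves the same invertible system and so coincides with the one already found.

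The main obstacle is purely the divisor bookkeeping in this last verification: one must check that membership $g\in L(Z-D_{L_\infty})$ is equivalent to the two conditions ``$g$ vanishes along $L_\infty$'' and ``the pole divisor of $g$ is contained in $Z$'', which uses that $Z=\opn{LZ}(S_{\OpenLoop})$ is supported away from $L_\infty$ (as $S_{\OpenLoop}$ restricts to the invertible operator $D$ over the line at infinity, so it has neither zeros nor poles there). I would also dispose of the degenerate cases: $g\not\equiv 0$ since $g(p_j)=-1$, and $g+1\not\equiv 0$ since $g+1=1$ on $L_\infty$, so that $f$ is a bona fide nonzero meromorphic function. With these routine checks in place the claim follows.
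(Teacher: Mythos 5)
Your proof is correct and takes essentially the same route as the paper: you solve the square linear system $\sum_{i=1}^r a_i f_i(p_j) = -1$ via the invertibility of $\mathcal{M}(p_1,\dots,p_r)$ guaranteed by $(p_1,\dots,p_r)\notin\mathcal{NF}$, pass through the parametrization $g \mapsto f = \frac{1}{g+1}$ of (\ref{fact:parametrize}), and deduce uniqueness of $f$ from the unique solvability of the system, exactly as the paper does. Your additional verifications --- that the parametrization is a genuine bijection, that a pole of $f$ at $p_j$ forces $g(p_j)=-1$ via $g=\frac{1}{f}-1$, and that the divisor bookkeeping works because $Z$ is supported away from $L_\infty$ --- are correct refinements of points the paper leaves implicit.
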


To summarize the above discussion,
we have proved the following theorem:

\begin{thm}
Consider a minimal operator vessel
\[
\mathcal{B} = (A_1,A_2,\widetilde{B},C,D,\widetilde{D},\sigma_1,\sigma_2,\gamma,\sigma_{1*},\sigma_{2*},\gamma_*;\mathcal{H},\mathcal{E},\widetilde{\mathcal{E}},\mathcal{E}_*,\widetilde{\mathcal{E}_*})
\]
as above, and assume that the input and output bundles of $\mathcal{B}$ are line bundles over the compact Riemann surface $X$. 
Denote by $g$ the genus of $X$,
let $Z = \opn{LZ}(S_{\OpenLoop}) \in \opn{Div}(X)$,
and let $K \in \opn{Div}(X)$ be a canonical divisor on $X$.
Then for
\[
r = \fdim(\mathcal{B}) = \ell(K-Z+D_{L_\infty}) + \dim(\mathcal{H}) - \dim(\mathcal{E}) -g +1 \ge \dim(\mathcal{H}) - \dim(\mathcal{E}) -g +1,
\]
given any $r$ distinct points $p_1,\dots,p_r$ of $X_0$,
except possibly tuples belonging to a codimension $1$ subset of measure $0$
\[
\mathcal{NF} \subseteq \underbrace{X_0\times X_0 \times \dots \times X_0}_{\mbox{$r$ times}}
\]
there exists an admissible state feedback operator $F:\mathcal{H} \to \mathcal{E}$,
such that the closed loop system $\ClosedLoop$ has a pole at $p_1,\dots,p_r$.
Moreover, for all $(p_1,\dots,p_r) \notin \mathcal{NF}$,
the rest of the poles of $\ClosedLoop$ are determined uniquely by the choice of the $r$ poles $p_1,\dots,p_r$.
\end{thm}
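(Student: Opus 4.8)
The plan is to assemble the results established in this section, the governing reduction being the Corollary above: an admissible feedback $F$ with $\opn{LP}(S_{\ClosedLoop}) = P$ exists exactly when there is a meromorphic function $f \in \mathcal{M}_X$ with $\opn{div}(f) = Z - P$ and $f_{|_{L_\infty}} = 1$, which is precisely a solution of Problem~\ref{theProb}. I would therefore begin by recording the Riemann--Roch computation that produces the integer $r = \fdim(\mathcal{B}) = \ell(Z - D_{L_\infty}) = \ell(K - Z + D_{L_\infty}) + \dim(\mathcal{H}) - \dim(\mathcal{E}) - g + 1$, together with the inequality $r \ge \dim(\mathcal{H}) - \dim(\mathcal{E}) - g + 1$ coming from $\ell(K - Z + D_{L_\infty}) \ge 0$; this integer is the number of poles one is free to prescribe.

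Next, I would fix $r$ distinct points $p_1,\dots,p_r \in X_0$ with $(p_1,\dots,p_r) \notin \mathcal{NF}$. By (\ref{fact:NF}) this is equivalent to $\ell(Z - D_{L_\infty} - \sum_{i=1}^r p_i) = 0$, which in turn is equivalent to invertibility of the evaluation matrix $\mathcal{M}(p_1,\dots,p_r) = (f_i(p_j))$ for a fixed basis $f_1,\dots,f_r$ of $L(Z - D_{L_\infty})$. I would use this invertibility to solve the linear system $\sum_{i=1}^r a_i f_i(p_j) = -1$ for $1 \le j \le r$, obtaining a unique coefficient vector $(a_1,\dots,a_r)$, and set $g = \sum_{i=1}^r a_i f_i \in L(Z - D_{L_\infty})$ and $f = \tfrac{1}{g+1}$, exactly as in the construction preceding this theorem.

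I would then verify, via the parametrization (\ref{fact:parametrize}), that $f$ solves Problem~\ref{theProb}: since $g$ vanishes on $D_{L_\infty}$ we get $f_{|_{L_\infty}} = 1$; the zero divisor of $f$ equals the pole divisor of $g$, hence is contained in $Z$; and $f$ has a pole at every $p_j$ because $g(p_j) + 1 = 0$. Feeding this $f$ into the Corollary yields an admissible state feedback operator $F$ for which $S_{\ClosedLoop}$ has poles at $p_1,\dots,p_r$. For the final uniqueness assertion I would invoke the invertibility of $\mathcal{M}(p_1,\dots,p_r)$ once more: it forces $(a_1,\dots,a_r)$, hence $g$, hence $f$, to be unique, so the full pole divisor of $S_{\ClosedLoop}$ — and in particular its poles other than $p_1,\dots,p_r$, which are the remaining zeros of $g+1$ — is determined by the choice of $p_1,\dots,p_r$.

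Almost every step here is bookkeeping that merely re-packages the preceding facts; the single piece of genuine content is that $\mathcal{NF}$ is a proper subset, so that generic tuples are admissible. This is exactly the preceding Lemma asserting that $\mathcal{P} = \det \mathcal{M}$ is a meromorphic function on $\mathcal{Y}$ which is not identically zero when $r > 0$, which I would cite; the codimension-one and measure-zero claims for $\mathcal{NF}$ then follow, since it is the zero locus of a nonzero meromorphic function. The main obstacle, such as it is, is conceptual rather than computational: ensuring that the interpolation system $\sum_{i=1}^r a_i f_i(p_j) = -1$ is both solvable and uniquely solvable, which is precisely what the hypothesis $(p_1,\dots,p_r) \notin \mathcal{NF}$ encodes.
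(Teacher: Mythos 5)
Your proposal is correct and follows essentially the same route as the paper: the paper's ``proof'' is precisely the preceding chain of facts (the Corollary reducing feedback to Problem~\ref{theProb}, the Riemann--Roch computation of $\fdim(\mathcal{B})$, the non-vanishing of $\mathcal{P}=\det\mathcal{M}$, and the interpolation system $\sum_i a_i f_i(p_j)=-1$ with its unique solution off $\mathcal{NF}$), which you assemble in the same order and with the same uniqueness argument. No gaps.
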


We remark that in the theorem above, 
some of the poles $p_1,\dots,p_r$ could be canceled if they happen to coincide with the zeros of the open loop system.

\subsection{Examples of small genus}

\subsubsection{Genus $0$}

Assume that the compact Riemann surface $X$ is of genus $0$.
In this particular case, the Riemann-Roch theorem says that
\[
\fdim(\mathcal{B}) = \ell(Z-D_{L_\infty}) = \deg(Z-D_{L_\infty}) +1 = 
\dim(\mathcal{H}) - \dim(\mathcal{E}) + 1 = n - m +1.
\]
or $\fdim(\mathcal{B}) = 0$ if this number is $\le 0$.
Setting $r = \fdim(\mathcal{B})$, and choosing any $r$ distinct points $p_1,\dots, p_r$,
note that
\[
\ell(Z-D_{L_\infty} - \sum_{i=1}^r p_i) = \deg(Z-D_{L_\infty}) -r +1 = 0
\]
Hence, it follows from (\ref{eqn:RRofNF}) that in this case $\mathcal{NF} = \emptyset$.
Thus, the pole placement theorem says in this case that given $r = \dim(\mathcal{H}) - \dim(\mathcal{E}) + 1$ distinct points $p_1,\dots, p_r$ in $X$, 
there is a closed loop system obtained by state feedback such that $p_1,\dots,p_r$ are poles of this system. The rest of the poles of the closed loop system are then uniquely determined by the choice of these $r$ poles. 
Consider the further specialized case where $m = 1$,
so that $X$ is simply $\mathbb{P}^1\mathbb{C}$. 
Here, we may take $A_1 = A_2 = A$, $\sigma_1 = \sigma_2 = \sigma_{1*} = \sigma_{2*} = 1$,
$\gamma = \gamma_* = 0$, and $\tilde{B} = B$.
Under these assumptions, the vessel $\mathcal{B}$ represents a classical 1D continuous-time time-invariant linear system, and we recover the classical pole placement theorem:
for any choice of $n = \dim(\mathcal{H})$ points, 
one can construct a closed loop system whose poles are the prescribed points.

\subsubsection{Genus $1$}

Assume that the compact Riemann surface $X$ is of genus $1$.
Let us choose some specific point $c_\infty \in L_{\infty} \cap X$.
Using this choice, $X$ has the structure of an elliptic curve over $\mathbb{C}$.
In particular, its points have the structure of an abelian group,
where addition is defined so that $p_1 + p_2 + p_3 = 0$, 
if and only if $p_1,p_2,p_3 \in X$ are the intersection of $X$ and a line.
Moreover, the chosen pointy $c_{\infty}$ is the identity of this group.

Denote this group operation by $\oplus$,
and let $\Phi:\opn{Div}(X) \to (X,\oplus)$ be the canonical group homomorphism.
In this genus one case, the Riemann-Roch theorem states that
\[
\fdim(\mathcal{B}) = \ell(Z-D_{L_\infty}) = \deg(Z-D_{L_\infty}) = 
\dim(\mathcal{H}) - \dim(\mathcal{E})  = n - m.
\]
assuming this number is positive.
If this number is negative, then $\fdim(\mathcal{B}) = 0$.
If $n = m$, then this number is either $0$ or $1$.
It is $1$ if and only if $Z-D_{L_{\infty}}$ is a principal divisor, 
equivalently, if $\Phi(Z-D_{L_{\infty}}) = c_\infty$.

Let $r = \fdim(\mathcal{B})$, 
and suppose that $r > 0$ and that $n > m$.
Given $r$ distinct points $p_1,\dots, p_r$ in $X$,
we have that 
\[
\deg(Z-D_{L_\infty} - \sum_{i=1}^r p_i) = 0.
\]
Hence, we have 
\[
\ell(Z-D_{L_\infty} - \sum_{i=1}^r p_i) = 0
\]
if and only if $\Phi(Z-D_{L_\infty} - \sum_{i=1}^r p_i) \ne c_{\infty}$.
Using (\ref{eqn:RRofNF}), we obtain the following characterization of $\mathcal{NF}$:
for any $r-1$ distinct points $p_1,\dots,p_{r-1}$,
there is a unique point $p_r \in X_0$ such that
$(p_1,\dots,p_r) \in \mathcal{NF}$. 
This point is given by 
\begin{equation}\label{eqn:pr}
p_r = \Phi(Z-D_{L_\infty} - \sum_{i=1}^{r-1} p_i).
\end{equation}
In particular, $\mathcal{NF} \ne \emptyset$.

To summarize the genus 1 case, the pole placement theorem in this case states that given 
$r = \dim(\mathcal{H}) - \dim(\mathcal{E})$ distinct points $p_1,\dots,p_r$,
such that the point $p_r$ is not the unique point that satisfies (\ref{eqn:pr}),
there is a closed loop system obtained by state feedback such that $p_1,\dots,p_r$ are poles of this system, and the rest of the poles of the closed loop system are then uniquely determined by the choice of these $r$ poles.

\subsubsection{Higher genus}

Suppose now that $X$ is a compact Riemann surface of genus $g > 1$.
As the genus of $X$ increases, it becomes more difficult to make a precise analysis of $\fdim(\mathcal{B})$ and of the set $\mathcal{NF}$. 
If however the dimension of the state space $\mathcal{H}$ is large enough compared to the dimension of the input space $\mathcal{E}$, we know the following from the Riemann-Roch theorem: assuming that 
\begin{equation}\label{eqn:morethangenus}
\dim(\mathcal{H}) - \dim(\mathcal{E}) > 2\cdot g - 2,
\end{equation}
there is an equality
\[
\fdim(\mathcal{B}) = \ell(Z-D_{L_\infty}) = \deg(Z-D_{L_\infty}) - g +1 = \dim(\mathcal{H}) - \dim(\mathcal{E}) -g +1.
\]
It follows, that, under the assumption (\ref{eqn:morethangenus}),
it is possible to place, generically, 
\[
\dim(\mathcal{H}) - \dim(\mathcal{E}) -g +1
\]
poles, except possibly if these points belong to the measure zero set $\mathcal{NF}$ introduced in (\ref{fact:NF}).

\textbf{Acknowledgments.}
Liran Shaul was partially supported by the Israel Science Foundation (grant no. 1346/15).
He was also partially supported by Charles University Research Centre program No.UNCE/SCI/022,
and by the grant GA~\v{C}R 20-13778S from the Czech Science Foundation.
Victor Vinnikov was partially supported by the Israel Science Foundation (grant no. 2123/17),
by the US–Israel Binational Science Foundation Grant No. 2010432,
and by the Deutsche Forschungsgemeinschaft (DFG) Grant No. SCHW 1723/1-1.
The authors thank the anonymous referee for many comments and suggestions that helped improving the manuscript.

\end{document}